\newcommand{\s}[1]{{\mathcal #1}}
\newcommand{\bb}[1]{{\mathbb #1}}
\newcommand{\T}{\mathbb{T}}
\newcommand{\R}{\mathbb{R}}
\newcommand{\Z}{\mathbb{Z}}
\def\dd{\,{\rm d}} % d in the integral
\newcommand{\ds}{\displaystyle}
\newcommand{\weakly}{\rightharpoonup}
\def\a{\alpha}
\def\d{\delta}
\def\l{\lambda}
\def\e{\varepsilon}
\newcommand{\cA}{{\mathcal A}}
\newcommand{\cB}{{\mathcal B}}
\newcommand{\sM}{{\mathscr M}}
\newcommand{\sP}{{\mathscr P}}
\newtheorem{theorem}{Theorem} %[section]
\newtheorem{corollary}[theorem]{Corollary}
\newtheorem{lemma}[theorem]{Lemma}
\newtheorem{proposition}[theorem]{Proposition}
\newtheorem{definition}[theorem]{Definition}
\newtheorem{remark}[theorem]{Remark}
\numberwithin{equation}{section}
\numberwithin{theorem}{section}
\title{Sobolev regularity for first order Mean Field Games}
\author[P.J. Graber]{P. Jameson Graber}
\address{Department of Mathematics, Baylor University, One Bear Place, Waco, TX 97328, USA}
\email{jameson\_graber@baylor.edu}
\author[A.R. M\'esz\'aros]{Alp\'ar R. M\'esz\'aros}  
\address{Department of Mathematics, UCLA, 520 Portola Plaza, Los Angeles, CA 90095, USA}
\email{alpar@math.ucla.edu}
\thanks{
{\it Keywords and phrases:} mean field games; Hamilton-Jacobi equations; Sobolev regularity of the solutions \\
{\it 2010 AMS Subject Classification:} 49K20; 35Q91; 49N60; 49N15; 49N70} 
\dedicatory{Version: \today}
\begin{document}
\maketitle

\begin{abstract}
In this paper we obtain Sobolev estimates for weak solutions of first order variational Mean Field Game systems with coupling terms that are local functions of the density variable. Under some coercivity conditions on the coupling, we obtain first order Sobolev estimates for the density variable, while under similar coercivity conditions on the Hamiltonian we obtain second order Sobolev estimates for the value function. These results are valid both for stationary and time-dependent problems. In the latter case the estimates are fully global in time, thus we resolve a question which was left open in \cite{ProSan}. Our methods apply to a large class of Hamiltonians and coupling functions.
\end{abstract}

\section{Introduction}
The theory of Mean Field Games (briefly MFG in the sequel) saw the light more than a decade ago, thanks to the works of J.-M. Lasry and P.-L. Lions on the one hand (see \cite{LasLio06i,LasLio06ii,LasLio07}) and M. Huang, R. Malham\'e and P. Caines, on the other hand (see \cite{HuaMalCai}). Their main motivation was to study limits of Nash equilibria of (stochastic or deterministic) differential games when the number of players tends to infinity. Since then, it became a very lively and active branch of the theory nonlinear partial differential equations.

In its most simple form an MFG can be described in an informal way as follows. As data, one considers the given quantities: $T>0$ (the time horizon), $H:\T^d\times\R^d\to\R$ (the Hamiltonian), $\phi_T:\T^d\to\R$ (the final cost of the agents), $f:\T^d\times\sP(\T^d)\to\R$ (the running cost of the agents) and $m_0\in\sP(\T^d)$ (the initial distribution of the agents), where $\T^d:=\R^d/\Z^d$ denotes the $d$-dimensional flat torus and $\sP(\T^d)$ stands for the set of nonnegative Borel probability measures on $\T^d$. A typical agent predicts the evolution of the agents' density $m:[0,T]\times\T^d\to[0,+\infty)$ and to find her/his optimal strategy, solves the control problem
\begin{equation}\label{prob:control}
\inf_{\a}\left\{\int_t^T L(\gamma(s),\a(s))+f(\gamma(s),m(s,\gamma(s)))\dd s +\phi_T(\gamma(T))\right\}=:\phi(t,x)
\end{equation}
subject to 
$$
\left\{
\begin{array}{ll}
\gamma'(s)=\alpha(s), & s\in(t,T]\\
\gamma(t)=x\in\T^d.
\end{array}
\right.
$$
Here the Lagrangian $L:\T^d\times\R^d\to\R$ is the Legendre-Fenchel transform of $H$ w.r.t. the second variable. The optimal strategy is given in feedback form, hence for the agent it is optimal to play  $-D_\xi H(\gamma(s),\nabla\phi(s,\gamma(s)))$. Having this velocity field as drift, the evolution of the agents' density is given by the solution of a continuity equation. If the prediction coincides with this evolution, the game has a Nash equilibrium. Thus, searching for Nash equilibria for MFG is equivalent to solving the following system of PDEs:

\begin{equation}\label{eq:MFG_intro}
\left\{
\begin{array}{ll}
-\partial_t\phi + H(x,\nabla\phi) = f(x,m), & {\rm{in}}\ (0,T)\times\T^d,\\[5pt]
\partial_t m -\nabla\cdot (D_\xi H(x,\nabla\phi)m)=0, & {\rm{in}}\ (0,T)\times\T^d,\\[5pt]
m(0,\cdot)=m_0,\ \ \phi(T,\cdot)=\phi_T, & {\rm{in}}\ \T^d.
\end{array}
\right.
\end{equation}

With a well-chosen time rescaling, one can introduce stationary MFG systems as long time average of time dependent ones. These take the form

\begin{equation}\label{eq:MFG_intro_stat}
\left\{
\begin{array}{ll}
 \l+  H(x,\nabla\phi) = f(x,m), & {\rm{in}}\ \T^d,\\[5pt]
 -\nabla\cdot (D_\xi H(x,\nabla\phi)m)=0, & {\rm{in}}\ \T^d,\\[5pt]
\ds \int_{\T^d}m\dd x=1, \; \ds\int_{\T^d}\phi\dd x=0, \; m\ge 0, & {\rm{a.e.\ in\ }}\T^d,
\end{array}
\right.
\end{equation}
where an additional variable $\lambda\in\R$ appears in the first equation, which plays the role of the ergodic constant. We refer to \cite{CarLasLioPor,CarLasLioPor2} and \cite[Section 4]{CarGra} for results on the limiting procedure and well-posedness of \eqref{eq:MFG_intro_stat}. 

The case when the running cost $f$ is monotone and regularizing (nonlocal) in the measure variable, is well understood in the literature. Already in the original works of J.-M. Lasry and P.-L. Lions (see also \cite{Lions-course}) has been shown the well-posedness of both systems \eqref{eq:MFG_intro} and \eqref{eq:MFG_intro_stat}. By contrast, when $f$ is a local function of $m$, the question of well-posedness of these systems is more subtle. 
Notice also that in the lack of sufficient regularity for the density variable $m$ (when $f$ is a local function of $m$), the control problem \eqref{prob:control} is not meaningful, while the system \eqref{eq:MFG_intro} may still have solutions in a suitable weak sense. 

In the case when $f$ is a local function of $m$, PDE techniques, which are used successfully for second order systems (see for instance \cite{GomPimSan15,GomPimSan16,GomPimVos,Por}), can no longer be used to show the well-posedness of the system \eqref{eq:MFG_intro}. Also, in general one cannot hope for the existence of classical solutions of \eqref{eq:MFG_intro}. Nevertheless, as it was already mentioned  in \cite{LasLio06ii} that when $f$ is non-decreasing in its second variable and $H$ convex in its second variable, at least formally, systems \eqref{eq:MFG_intro} and \eqref{eq:MFG_intro_stat} can be seen as first order optimality conditions of two convex optimization problems in duality. In the first order case (or when a degenerate diffusion is present) these arguments were made rigorous in a series of papers (see \cite{Car15,CarGra, CarGraPorTon}), and the existence of weak solutions (understood in a suitable sense) was shown. An important and interesting question in this context is the regularity of solutions of \eqref{eq:MFG_intro} and \eqref{eq:MFG_intro_stat}. 

A first result in this direction was obtained in \cite{CarPorTon}, which, it must be emphasized, provides first and foremost a regularity result for viscosity solutions of first-order Hamilton-Jacobi equations.
Using techniques involving the inverse H\"older inequality, the authors obtained $W^{1,1}_{{\rm{loc}}}((0,T)\times\T^d)$ estimates and a.e.~differentiability of $\phi$, depending only on the summability of the right-hand side of the equation.
In the context of mean field games, this amounts to requiring a sufficiently high growth condition on $f(x,m)$.
Notice that their estimates are not owing to the variational structure of the mean field games system; in particular the density variable plays no role.

\vspace{10pt}

A second direction in the search for Sobolev estimates for first order MFGs was initiated in \cite{ProSan} (see also \cite{San17}), where the authors obtained $H^1_{{\rm{loc}}}((0,T]\times\T^d)$ estimates for a well-chosen function of $m$, where $(\phi,m)$ is a weak solution of \eqref{eq:MFG_intro}. Their technique is based on a so-called {\it regularity by duality method}, which relies on the fact that \eqref{eq:MFG_intro} is the optimality condition for two convex optimization problems in duality. In fact, these techniques are inspired from \cite{CarMesSan} and more precisely they have their roots in \cite{Bre99} (see also \cite{AmbFig2}), where Y. Brenier introduced a similar approach to obtain regularity estimates for the pressure field arising from generalized solutions of the incompressible Euler equations. In a nutshell, the basic idea of this method is the following. One considers special competitors for the optimization problem involving the $m$ variable that are translations of the optimizers in time and space, then one compares the energy of these new competitors to the energy of the optimizers. Finally, assuming a sort of coercivity/monotonicity property of the running cost $f$ in the second variable (see the precise assumption in Sections \ref{sec:prelim}-\ref{sec:stationary}), one can obtain a differential quotient estimate for a function of $m$ via the difference of the energies and deduce the Sobolev estimate. A drawback of this method is that since the optimization problem is subject to the continuity equation with an initial condition, when one constructs the new competitors (via the time-space translations), one must preserve the initial condition, so it seems impossible to construct translations that allow to vary the initial time as well. We shall give more details on this method in Section \ref{sec:prelim}.

Let us mention that in the framework of first order models, P.-L. Lions in his lectures (\cite{Lions-course}) showed -- at least formally -- how to obtain a priori estimates in the case of the {\it planning problem} (where the initial and final density of the agents is prescribed). In particular, by rewriting the system as a very degenerate elliptic system in time and space, he obtains global $L^\infty$ estimates for the density variable $m$ and $W^{1,\infty}$ estimates for the value function $\phi$. However, his techniques require strong smoothness assumptions on the prescribed densities, Hamiltonian and coupling and positivity (and monotonicity) of the coupling function. Nevertheless, our objective in this paper is different: it is to obtain first order Sobolev estimates for the $m$ variable and second order Sobolev estimates for $\phi$. Thus, it seems that such an approach using degenerate elliptic equations to obtain these regularity estimates cannot be applied in our case. In the same context enters the recent paper \cite{LavSan}, where the authors obtain local in time $L^\infty$ estimates for first order MFG models (with quadratic Hamiltonians), using variational techniques. Their approach, however, is very different from the one of Lions and from ours as well.

Actually, the same techniques as in \cite{ProSan,San17} can be used -- more or less in a straight forward manner -- for stationary problems (since there is no time involved there) as well, so as our preparatory results, we present how to gain Sobolev estimates for the solutions of \eqref{eq:MFG_intro_stat}. More precisely, we have

\begin{theorem}[Theorem \ref{thm:stat_main}, Theorem \ref{thm:stat_main2}]\label{thm:intro_stat}
We assume the standard regularity and growth conditions on the data $f$ and $H$ (see Sections \ref{sec:prelim} and \ref{sec:stationary} for the precise assumptions). Let $(\phi,m,\l)$ be a solution of \eqref{eq:MFG_intro_stat}.  Then, one has a second order Sobolev estimate for $\phi$, i.e.
$$J_* (H(\cdot,\nabla\phi)+\l)\in H^1(\T^d),$$
and a first order Sobolev estimate for $m$, i.e.
$$J(m)\in H^1(\T^d),$$
where $J,J_*:\R\to\R$ are functions depending on $f$ with a precise growth condition that depends on the growth condition of $f$ in its second variable.
\end{theorem}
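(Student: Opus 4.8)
The plan is to exploit the variational structure of \eqref{eq:MFG_intro_stat}: the system is the Euler--Lagrange/optimality condition for a pair of convex problems in duality, one posed over the density $m$ (with the constraint $\int m = 1$, $m\ge 0$, and the stationary continuity equation $-\nabla\cdot(wm) = 0$ written in terms of a vector field $w = -D_\xi H(x,\nabla\phi)$) and a dual problem posed over the value function $\phi$ (with the ergodic constant $\l$). The stationary case is genuinely simpler than the time-dependent one because there is no initial condition to preserve, so one is free to use translations in \emph{all} of space without any obstruction. I would begin by setting up both optimization problems carefully, recording that $(\phi,m,\l)$ being a solution means $m$ optimizes the primal and $\phi$ optimizes the dual, and that the optimal values agree (no duality gap), which pins down the pointwise relations $f(x,m) = H(x,\nabla\phi) + \l$ on $\{m>0\}$ and $f(x,0) \ge H(x,\nabla\phi)+\l$ in general.

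Next I would carry out the translation argument. For the estimate on $J(m)$: fix $h\in\R^d$ and consider the competitor $m_h(x) := m(x+h)$ together with the correspondingly translated vector field $w_h(x) := w(x+h)$; translation invariance of the torus preserves admissibility (mass, nonnegativity, the stationary continuity equation), so the energy of $(m_h,w_h)$ is no smaller than that of the optimizer. Averaging the inequality obtained from $+h$ and $-h$, the kinetic/Hamiltonian terms are handled by the regularity and convexity/growth assumptions on $H$ (they contribute a controlled error of order $|h|^2$ after using that $\nabla\phi$ itself enjoys some a priori bound coming from the Hamilton--Jacobi equation and the summability of the right-hand side, à la \cite{CarPorTon}), while the coupling term produces, via the coercivity/monotonicity assumption on $f$ in its second variable, a term bounded below by a constant times $\int |J(m(x+h)) - J(m(x))|^2\,dx$ for the appropriate primitive-type function $J$ attached to $f$. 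Dividing by $|h|^2$ and sending $h\to 0$ yields the uniform bound on the difference quotients of $J(m)$, hence $J(m)\in H^1(\T^d)$. For the second order estimate on $\phi$ I would run the dual translation argument symmetrically: translate the subsolution $\phi$ of the dual problem, use coercivity of $f^*$ (equivalently, the growth/convexity of $f$) to extract a lower bound on the difference quotients of $J_*(H(\cdot,\nabla\phi)+\l)$, the conjugate-type function $J_*$ arising from the Legendre transform of $f$, and conclude $J_*(H(\cdot,\nabla\phi)+\l)\in H^1(\T^d)$.

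The main obstacle I anticipate is making the translation comparison rigorous at the level of \emph{weak} solutions: one must know that $m$ (resp.\ $\phi$) actually attains the relevant minimum and that the competitors are genuinely admissible with finite energy, and one must control the cross terms produced by translating a vector field of the form $D_\xi H(x,\nabla\phi)$ — the $x$-dependence of $H$ means the translated field is \emph{not} simply $D_\xi H(x, \nabla\phi(x+h))$, so an error term involving $D_x H$ appears and has to be absorbed using the regularity assumptions on $H$ together with whatever integrability of $\nabla\phi$ is available. A secondary technical point is identifying precisely which primitives $J,J_*$ the coercivity assumptions on $f$ (e.g. $f(x,m)\gtrsim m^{q-1}$-type bounds) force to appear; this is essentially a convex-analysis bookkeeping exercise matching the growth exponent of $f$ to the exponent in $J$, but it must be done with care so that the resulting $L^2$ difference-quotient bound is exactly the one defining membership in $H^1$. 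Once these points are settled, the argument is otherwise a clean adaptation of the duality/translation scheme of \cite{Bre99,CarMesSan,ProSan} to the stationary setting, with the welcome simplification that no initial-time constraint interferes.
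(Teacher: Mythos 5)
Your overall strategy is the paper's own: use the stationary duality of Lemma \ref{Lem:dualiteergo}, translate the optimizers (correctly noting that the stationary case has no initial-condition obstruction), show that the translated energy exceeds the optimal one by at most $O(|h|^2)$ using the $C^{1,1}$-in-$x$ regularity of the data (the paper's hypotheses (H5), (H5')), and convert this into an $L^2$ difference-quotient bound via the coercivity \eqref{coercivity}. Where the paper proves that $h\mapsto \cA(\lambda,\phi_h)$ (resp.\ $h\mapsto\cB(m_h,w_h)$) is $C^{1,1}$ with a critical point at $h=0$, your $\pm h$ averaging gives the same $O(|h|^2)$ bound under the same regularity, so that part is an equivalent variant rather than a different route.

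There is, however, one step which as written would fail: for the $J(m)$ estimate you assert that, after averaging the primal inequalities for $\pm h$, ``the coupling term produces \ldots a term bounded below by $c\int|J(m_h)-J(m)|^2$.'' It does not. After the change of variables the translation sits in the $x$-slot of $F$ (and of $H^*$), so the coupling contribution to the averaged inequality is $\int_{\T^d}\left[F(x-h,m)+F(x+h,m)-2F(x,m)\right]\dd x=O(|h|^2)$, with the \emph{same} density $m(x)$ in all three terms; no coercive quantity in $m$ is generated from the primal problem alone, and likewise ``coercivity of $f^*$'' inside the dual functional alone does not produce the $J_*$ difference. The coercive term must come from pairing the translated competitor with the \emph{other} problem's fixed optimizer: by \eqref{coercivity} and $\int_{\T^d}\nabla\phi\cdot w_h\dd x=0$ one has $\cA(\lambda,\phi)+\cB(m_h,w_h)\ge c_0\|J(m_h)-J_*(\lambda+H(\cdot,\nabla\phi))\|_{L^2}^2$, while the zero duality gap forces $J(m)=J_*(\lambda+H(\cdot,\nabla\phi))$ a.e.; combined with $\cB(m_h,w_h)\le\cB(m,w)+C|h|^2$ this yields $\|J(m_h)-J(m)\|_{L^2}^2\le C|h|^2$, and symmetrically, pairing $\phi_h$ with the fixed $(m,w)$ gives the bound on $J_*(H(\cdot,\nabla\phi_h)+\lambda)-J_*(H(\cdot,\nabla\phi)+\lambda)$. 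With that rewiring — which uses exactly the duality and pointwise identities you already set up — your argument coincides with the paper's proofs of Theorems \ref{thm:stat_main} and \ref{thm:stat_main2}; also note that no a priori bound on $\nabla\phi$ in the style of \cite{CarPorTon} is needed, only finiteness of the optimal energies together with (H1)--(H3), (H5), (H5').
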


As our main result, we obtain global in time Sobolev estimates for weak solutions of first order MFG systems up to the initial (and final) time. While we keep the variational framework and rely on the convex duality, because of the obstruction described above, our approach is different from the one used in \cite{ProSan}. Our main results can be summarized in an informal way in the theorem below.

\begin{theorem}[Proposition \ref{prop:space-regularity}, Corollary \ref{cor:time-reg}] \label{thm:intro_time}
We assume that $m_0,\phi_T \in W^{2,\infty}(\bb{T}^d)$ and we assume some regularity and growth conditions on $H$ (which has a growth like $|\cdot|^r$ for some $r>1$ in its second variable) and $f$ (which has a growth like $|\cdot|^{q-1}$ for some $q>1$ in its second variable). We refer to Sections \ref{sec:prelim} and \ref{sec:time} for the precise assumptions.

Then there exist a constant $C>0$ depending only on the data such that
\begin{itemize}
\item[(i)] $\|m^{\frac{q}{2} - 1}\nabla m\|_{L^2([0,T]\times\T^d)} \leq C;$ \vspace{5pt}
\item[(ii)] $\|m^{1/2}D (j_1(\nabla \phi))\|_{L^2([0,T]\times\T^d)} \leq C;$ \vspace{5pt}
\item[(iii)] if $r=2$, then $\|\partial_t (m^{q/2})\|_{L^1([0,T]\times\T^d)} \leq C.$
\end{itemize}
Here $j_1:\R^d\to\R^d$ is a function depending on $H$ and has a growth like $|\cdot|^{r/2}.$ 
\end{theorem}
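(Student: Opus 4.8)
The plan is to work with the convex duality behind \eqref{eq:MFG_intro}: the system is the optimality condition both of a primal problem -- minimize $\cB(m,w):=\iint\big[mL(x,-\tfrac{w}{m})+F(x,m)\big]+\int\phi_T\,m(T,\cdot)$ over $(m,w)$ with $\partial_t m+\nabla\cdot w=0$, $m(0,\cdot)=m_0$, $m\ge0$ (here $\partial_m F=f$) -- and of a dual problem -- minimize $\cA(\phi):=\int\phi(0,\cdot)m_0+\iint F^*\big(x,-\partial_t\phi+H(x,\nabla\phi)\big)$ over $\phi$ with $\phi(T,\cdot)=\phi_T$. For admissible $(m,w)$ and $\phi$ one has the gap identity $\cB(m,w)+\cA(\phi)=\iint(\mathcal E_1+\mathcal E_2)\ge0$, where $\mathcal E_1:=mL(x,-\tfrac{w}{m})+w\cdot\nabla\phi+mH(x,\nabla\phi)$ and $\mathcal E_2:=F(x,m)-m\,g+F^*(x,g)$, $g:=-\partial_t\phi+H(x,\nabla\phi)$, are the pointwise Fenchel gaps for the pairs $(L,H)$ and $(F,F^*)$; both are nonnegative and vanish exactly at the optimal triple $(\bar m,\bar w,\bar\phi)$, which satisfies $\bar w=-\bar m\,D_\xi H(x,\nabla\bar\phi)$, $-\partial_t\bar\phi+H(x,\nabla\bar\phi)=f(x,\bar m)$, and $\cB(\bar m,\bar w)+\cA(\bar\phi)=0$.

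For Proposition \ref{prop:space-regularity} (parts (i) and (ii)) I would freeze the primal optimizer $(\bar m,\bar w)$ and feed into the gap identity a spatially perturbed dual competitor that still meets the terminal condition, for instance $\phi^h(t,x):=\bar\phi(t,x-h)+\phi_T(x)-\phi_T(x-h)$. Writing $\Delta_h g:=g(\cdot-h)-g(\cdot)$, the optimality relations and a second-order expansion show that, up to errors of size $O(|h|^2)\bar m$, one has pointwise $\mathcal E_1\gtrsim \bar m\,|\Delta_h(j_1(\nabla\bar\phi))|^2$ and $\mathcal E_2\gtrsim \bar m^{q-2}|\Delta_h\bar m|^2$ -- the first because $j_1$ is essentially a primitive of $\sqrt{D^2_{\xi\xi}H}$, so the coercivity of $H$ in $\xi$ is what bounds the Fenchel gap of $H$ from below; the second because $\mathcal E_2$ equals, to leading order, the Bregman divergence of $F$ between $\bar m$ at $x$ and at $x-h$, controlled from below by the coercivity of $f$ in $m$. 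On the other hand $\cA(\phi^h)+\cA(\phi^{-h})-2\cA(\bar\phi)=O(|h|^2)$: symmetrizing in $\pm h$ kills the first-order terms; the $\bar\phi$-translation part of the boundary term becomes $\int\bar\phi(0,x)\big[m_0(x+h)+m_0(x-h)-2m_0(x)\big]\,\dd x=O(|h|^2)$ since $m_0\in W^{2,\infty}$, the correction terms $\phi_T(x)-\phi_T(x\mp h)$ contribute $O(|h|^2)$ since $\phi_T\in W^{2,\infty}$, and the $F^*$-integral is handled by Taylor expansion, using the identity $\partial_g F^*(x,f(x,\bar m))=\bar m$ and mass conservation $\iint\bar m=T$ together with the standing $W^{2,\infty}_x$ regularity of $H$ and $F^*$. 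Combining with $\cB(\bar m,\bar w)+\cA(\bar\phi)=0$ gives $\iint\big[\bar m\,|\Delta_h(j_1(\nabla\bar\phi))|^2+\bar m^{q-2}|\Delta_h\bar m|^2\big]\le C|h|^2$; dividing by $|h|^2$, summing over $h$ in a basis of directions and letting $h\to0$ yields (i) and (ii) on all of $[0,T]\times\T^d$.

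The step I expect to be the main obstacle is exactly the one identified in \cite{ProSan} as the obstruction: producing a perturbed competitor that is admissible up to the pinned time-endpoint while keeping the energy overshoot quadratic in $h$, and controlling the attendant $F^*$-Taylor remainder. The difficulty is that $F^*(x,\cdot)$ is strongly convex only away from $\{\bar m=0\}$, its curvature blowing up there when $f$ is super-quadratic, so the naive remainder $\partial^2_{gg}F^*\,|h|^2\sim\bar m^{\,2-q}|h|^2$ need not be integrable for $q>2$; symmetrically, if one instead bends the perturbation back to the datum in a layer of width $\d$ near the endpoint, the layer costs of order $|h|^{q'}/\d^{q'-1}$ (or $|h|^{r'}/\d^{r'-1}$ on the primal side) do not survive division by $|h|^2$ for the bad range of exponents. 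This is why the precise statements use the exponent-tuned functions $J,J_*,j_1$ rather than pure powers -- one tests against $\nabla(J(\bar m))$ and $D(j_1(\nabla\bar\phi))$ with $J,j_1$ calibrated so that the conjugate quantities have bounded curvature (which is also what forces $r=2$ in part (iii)) -- and why, in the remaining cases, one invokes as an additional input, derived from $m_0,\phi_T\in W^{2,\infty}$, one-sided $L^\infty$ bounds for $\bar m$ near $t=0$ and near $t=T$, so that $F^*$ and $L$ are effectively quadratic on the relevant range of values and the endpoint layer costs only $O(|h|^2)$ for fixed width $\d$; one then combines a competitor bent near $t=0$ (on the primal side, giving the estimate on $[\d,T]$) with one bent near $t=T$ (on the dual side, giving the estimate on $[0,T-\d]$) to cover all of $[0,T]$.

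Finally, part (iii) (Corollary \ref{cor:time-reg}) follows from (i)--(ii) and the continuity equation. Writing it as $\partial_t(\bar m^{q/2})=\tfrac{q}{2}(\nabla\cdot b)\,\bar m^{q/2}+b\cdot\nabla(\bar m^{q/2})$ with $b:=D_\xi H(x,\nabla\bar\phi)=-\bar w/\bar m$, one exploits that when $r=2$ the map $j_1$ is bilipschitz: this bounds $\nabla\cdot b$, up to $|\nabla\bar\phi|$ and bounded terms, by $|D(j_1(\nabla\bar\phi))|$, and it makes the powers of $\bar m$ appearing as weights compatible with the estimates (i), (ii) and with the kinetic-energy bound $\iint\bar m\,H(x,\nabla\bar\phi)<\infty$; two applications of Hölder's inequality (together with $\bar m^{(q-1)/2}\in L^2$, which comes from $\iint F(x,\bar m)<\infty$) then give $\partial_t(\bar m^{q/2})\in L^1([0,T]\times\T^d)$ with the stated bound.
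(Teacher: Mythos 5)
Your overall skeleton (space translations only, symmetrization in $\pm h$, a duality gap bounded below via the coercivity of $H$ and the strong convexity/monotonicity of the coupling, division by $|h|^2$) is in the right spirit, and your argument for part (iii) is essentially the paper's: write $\partial_t m=-\nabla\cdot w$ with $w=-mD_\xi H(x,\nabla\phi)$, expand the divergence, and use (i)--(ii) together with the $r=2$ bounds on $D^2_{pp}H$, $D^2_{xp}H$ and H\"older. But the central step of your proof of (i)--(ii) has a genuine gap. You perturb only the dual competitor, setting $\phi^h(t,x)=\bar\phi(t,x-h)+\phi_T(x)-\phi_T(x-h)$, and you then need the second difference $\s{A}(\phi^h)+\s{A}(\phi^{-h})-2\s{A}(\bar\phi)=O(|h|^2)$. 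This forces you to expand the term $\iint F^*(x,-\partial_t\phi^{\pm h}+H(x,\nabla\phi^{\pm h}))$, and there is no way to do this under the paper's hypotheses: the argument of $F^*$ changes not only by the $x$-shift but also by increments of size $O(|h|)(1+|\nabla\bar\phi|^{r})$ coming from the $\phi_T$-correction and from $H(x,\cdot)$ versus $H(x\mp h,\cdot)$, so the first-order term already involves $D_aF^*(x,\alpha)\,(1+|\nabla\bar\phi|^{r})\sim \alpha_+^{p-1}(1+|\nabla\bar\phi|^{r})$, which is not integrable in general ($\alpha_+^{p-1}\in L^{q}$ but $|\nabla\bar\phi|^{r}$ is only $L^1$); the second-order term involves the curvature of $F^*(x,\cdot)$, which degenerates or blows up on $\{\bar m=0\}$ exactly as you note. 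Your proposed remedies — one-sided $L^\infty$ bounds on $\bar m$ near $t=0,T$ ``derived from $m_0,\phi_T\in W^{2,\infty}$'', time-layer bending of competitors, and calibrating $J,j_1$ so that conjugates have bounded curvature — are not established (no such $L^\infty$ bounds are available for these weak solutions under (H1)--(H9), and no time perturbation is needed at all for the spatial estimates), and the $r=2$ restriction in (iii) has nothing to do with curvature calibration of $F^*$: it comes from needing $D_\xi H$ Lipschitz in the continuity-equation step.

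The paper avoids this entire difficulty by a different mechanism, which is the actual key idea: since $F$, $H$, $m_0$, $\phi_T$ are all translated, the translates $(\phi^\delta,\alpha^\delta)$ and $(m^\delta,w^\delta)$ are exact minimizers of the translated problems, and instead of expanding $\s{A}$ one uses $\phi_n^{\pm\delta}$ and $\phi_n$ as test functions in the continuity equations for $m$ and $m^{\pm\delta}$ and combines with the energy identity \eqref{eq:ibp_weak} for the solution and for its translate. In the resulting symmetrized identity the $F^*$ terms never appear; the coupling enters only through $\iint\big(f^\delta(m^\delta)+f^{-\delta}(m^{-\delta})-2f(m)\big)m$, which after a change of variables equals $-\iint\big(f^\delta(m^\delta)-f(m)\big)(m^\delta-m)$ and is handled directly by the strong monotonicity \eqref{f strongly monotone} and the Lipschitz-in-$x$ condition \eqref{f Lipschitz in x}; this term is not an error to be absorbed but the very source of estimate (i), while \eqref{eq:Hcoercivity} applied with the common anchor $j_2(-w/m)$ gives (ii), and the only remainders are the second differences of $\phi_T$, $m_0$ (hence $W^{2,\infty}$) and a $D^2_{xx}H$ term controlled by \eqref{eq:D_x^2 H}. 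No expansion of $F^*$, no assumption on $D^2F^*$, and no behavior of $\bar m$ near $\{m=0\}$ or near the time endpoints is ever needed, which is precisely why the estimates are global in time. As written, your proof does not close without substantial additional (and unproven) hypotheses; to repair it you would need to replace the one-sided perturbation of $\s{A}$ by the paper's two-sided comparison with the translated-data problems.
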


The core idea of our technique is the following. The space translations of the optimizers will actually solve some auxiliary optimization problems, where the data are the space translations of the data of the original problems. This observation leads us to obtain fully global in time differential quotient estimates in the space variable. Then using the continuity equation -- when the Hamiltonian has a quadratic growth in its second variable -- the first order Sobolev estimates in time for $m$, globally in time, will be a consequence. 

\vspace{10pt}

Let us mention that while in \cite{ProSan} the authors considered only the case when $H(x,\xi)=\frac12|\xi|^2$, most of the computations seem to be adaptable to more general Hamiltonians to obtain {\it local in time} Sobolev estimates, but to gain estimates up to the final time $t=T$, it seems crucial for them that $D_\xi H$ is a Lipschitz function. In comparison with this, our techniques allow us to consider quite general Hamiltonians (with no restriction on the growth condition) to obtain the global in time estimates for the space variable. Nevertheless, for the global in time estimates in the time variable, it seems that we have a similar obstruction. 

Let us remark also that the estimate Theorem \ref{thm:intro_time}-(ii) seems to appear naturally in the context of Hamilton-Jacobi equations in connection with the nonlinear adjoint method introduced by L.C. Evans in \cite{Eva}. Nevertheless, we believe that it is interesting to see this kind of second order estimates in the framework of first order Hamilton-Jacobi equations arising from MFGs, especially if we take into consideration that solutions to these equations a priori had to be understood in a very weak sense.

As a last remark, let us mention that it seems that our techniques to obtain global in time Sobolev estimates can be applied for other type of problems as well, in particular in the case of the first order {\it planning problem} (see \cite{Lions-course}). These estimates -- together with other results on this kind of systems -- is the subject of a subsequent work (\cite{GraMes2}). 
\vspace{10pt}

The structure of the paper is simple. In Section \ref{sec:prelim} we collect the existing results on the well-posedness of first order variational MFG systems. The main assumptions, the convex duality results for the optimizations problems are also presented here. We also give a short summary on the main ideas of the technique ``regularity by duality'' presented in \cite{ProSan} in the framework of MFG.

In Section \ref{sec:stationary} we present the Sobolev estimates for first order stationary MFG systems and we prove Theorem \ref{thm:intro_stat}.

Finally, Section \ref{sec:time} is devoted to the proof of the global in time Sobolev estimates presented in Theorem \ref{thm:intro_time}.

\section{Preliminary results and the regularity by duality method}\label{sec:prelim}

We list the standing assumptions which are sufficient to ensure the well-posedness of both  \eqref{eq:MFG_intro} and \eqref{eq:MFG_intro_stat}.

\vspace{10pt}

{\bf Standing assumptions: } the following conditions on the data were used to get the result in \cite{CarGra}.

\begin{enumerate}
	\item[(H1)] (Conditions on the Hamiltonian) $H:\bb{T}^d \times \bb{R}^d \to \bb{R}$ is continuous in both variables, convex and differentiable in the second variable, with $D_\xi H$ continuous in both variables. Moreover, $H$ has superlinear growth in the gradient variable: there exist $r >1 $ and $C >0$ such that
	\begin{equation}
	\label{eq:hamiltonian_bounds}
	\frac{1}{rC}|\xi|^r-C \leq H(x,\xi) \leq \frac{C}{r}|\xi|^r + C,\ \ \forall\ (x,\xi)\in\T^d\times\R^d.
	\end{equation}
	We denote by $H^*(x,\cdot)$ the Fenchel conjugate of $H(x,\cdot)$, which, due to the above assumptions, satisfies
	\begin{equation}
	\label{eq:hamiltonian_conjugate_bounds}
	\frac{1}{r'C}|\zeta|^{r'}-C \leq H^*(x,\zeta) \leq \frac{C}{r'}|\zeta|^{r'} + C,\ \ \forall\ (x,\zeta)\in\T^d\times\R^d,
	\end{equation}
	where $r' = r/(r-1)$ is the conjugate of $r$. 
	\item[(H2)] (Conditions on the coupling) Let $f$ be continuous on $\bb{T}^d \times (0,\infty)$, strictly increasing in the second variable, and there exists a constant $C>0$ satisfying
	\begin{equation} \label{eq:coupling_growth}
	\frac{1}{C}|m|^{q-1} - C \leq f(x,m) \leq C|m|^{q-1} + C, ~~ \forall ~ m \geq 1, \forall\ x\in\T^d.
	\end{equation}
	\item[(H3)] (Conditions on the antiderivative of $f$) We define $F$ so that $F(x,\cdot)$ is an antiderivative of $f(x,\cdot)$ on $(0,\infty)$, that is,
\begin{equation}
F(x,m) = \int_1^m f(x,s)\dd s, ~~ \forall ~ m > 0.
\end{equation}
It follows that $F$ is continuous on $\bb{T}^d \times (0,\infty)$, is strictly convex and differentiable in the second variable, and satisfies the growth condition
\begin{equation} \label{eq:cost_growth}
\frac{1}{qC}|m|^q - C \leq F(x,m) \leq \frac{C}{q}|m|^q + C,~~~ \forall ~ m \geq 1,\ \forall\ x\in\T^d.
\end{equation}
For $m < 0$ we set $F(x,m) = +\infty$. 
We denote by $F(x,0)$ the limit $\lim_{m \to 0^+} F(x,m)$, which may be finite or $+\infty$.

We will denote throughout the conjugate exponent of $q$ by $p = q'$. Define $F^*(x,\cdot)$ to be the Fenchel conjugate of $F(x,\cdot)$ for each $x\in\T^d$. Note that
\begin{equation} \label{eq:cost_growth_star}
\frac{1}{pC}|a|^p - C \leq F^*(x,a) \leq \frac{C}{p}|a|^p + C, ~~ \forall a \geq 0,\ \forall\ x\in\T^d.
\end{equation}
\end{enumerate}

\subsection{Preliminary results on the stationary system}
Let us recall here the stationary MFG system.

\begin{equation}
\label{MFGergo}
\left\{\begin{array}{cll}
{\rm{(i)}}&  \lambda +H(x,D \phi) =f(x, m(x)), &{\rm{in}}\ \T^d,\\[5pt]
{\rm{(ii)}} & -\nabla\cdot (mD_\xi H(x, \nabla \phi))=0, &{\rm{in}}\ \T^d, \\[5pt]
{\rm{(iii)}}&  \ds\int_{\bb{T}^d}  m\dd x= 1; \; \ds\int_{\T^d}\phi\dd x=0;\; & m\geq 0,\ \ {\rm{a.e\ in\ }} \T^d.
\end{array}\right.
\end{equation}

\begin{definition}\label{def:weaksolergoMFG} We say that a triple $(\lambda,  \phi,m )\in \bb{R} \times W^{1, pr}(\bb{T}^d)\times L^q(\bb{T}^d)$ is a weak solution of \eqref{MFGergo} if
	\begin{itemize}
		\item[(i)] $m\geq 0$ a.e. in $\T^d$, $\displaystyle \int_{\bb{T}^d}m\dd x=1$, $\ds\int_{\T^d}\phi\dd x=0$ and $mD_\xi H(\cdot,\nabla \phi)\in L^{1}(\bb{T}^d)$, 
		\item[(ii)] Equation \eqref{MFGergo}-(i) holds in the following sense:
		\begin{equation}
		\label{eq:aeergo}
		\displaystyle \quad \lambda +H(x,\nabla \phi(x))= f(x,m(x)) \quad \; \mbox{\rm a.e. in $\{m>0\}$}
		\end{equation}
		and 
		\begin{equation}
		\label{eq:distribergo}
		\quad \lambda +H(x,\nabla \phi(x))\leq  f(x,m) \quad \mbox{\rm a.e. in}\; \bb{T}^d,  
		\end{equation}
		
		\item[(iii)] Equation \eqref{MFGergo}-(ii) holds:  
		\begin{equation}
		\label{eqcontdefergo}
		\displaystyle \quad -\nabla\cdot( mD_\xi H(x,\nabla \phi))= 0\quad {\rm in }\; \bb{T}^d, 
		\end{equation}
		in the sense of distributions.
	\end{itemize}
\end{definition}

In the sequel we summarize the results on the well-posedness of system \eqref{MFGergo}. We define, on $\overline{ \s{K}}_0:= \bb{R}\times W^{1,pr}(\bb{T}^d)$, the functional 
\begin{equation}
\label{DefmathcalA}
{\mathcal A}(\lambda,\phi)= \int_{\bb{T}^d}  F^*\left(x,\lambda+H(x,\nabla \phi(x)) \right) \dd x - \lambda. 
\end{equation}
Our first optimization problem is 
\begin{equation}\label{PB:dual2ergo}
\inf_{(\lambda,\phi)\in \overline{ \s{K}}_0} \mathcal A(\lambda,\phi)
\end{equation}
To describe the second optimization problem, let us denote by $\overline{\s{K}}_1$ the set of pairs $(m, w)\in L^1(\bb{T}^d) \times L^1(\bb{T}^d;\bb{R}^d)$ such that $m\geq 0$ a.e. in $\T^d$, $\displaystyle \int_{\bb{T}^d}m(x)\dd x=1$, and $w$ is divergence free, i.e. it satisfies in the sense of distributions
\begin{equation}
\label{conteqergo}
\nabla\cdot w=0,\ \  {\rm in}\ \  \bb{T}^d. 
\end{equation}
We define on $\overline{\s{K}}_1$ the functional
$$
{\mathcal B}(m,w)= \int_{\bb{T}^d} m(x) H^*\left(x, -\frac{w(x)}{m(x)}\right)+ F(x,m(x)) \dd x ,
$$
where we use the usual convention to define $H^*(x,-w/m)$, i.e.
\begin{equation}\label{conventionH*}
mH^*\left(x,-\frac{w}{m}\right) = \left\{ \begin{array}{ll} +\infty, & \mbox{\rm if $m=0$ and $w\neq 0$,} \\ 0, & \mbox{\rm if $m=0$ and $w = 0$.} \end{array} \right.
\end{equation}

Since $H^*$ and $F$ are bounded below and $m\geq 0$ a.e., the integral in ${\mathcal B}(m,w)$  is well defined in  $\bb{R}\cup\{+\infty\}$. 
The second optimal control problem is the following: 
\begin{equation}
\label{Pb:mw2ergo}
\inf_{(m,w)\in \overline{\s{K}}_1} \mathcal B(m,w)\;.
\end{equation}

\begin{lemma}[\cite{CarGra}]\label{Lem:dualiteergo} We have
	\begin{equation}
	\label{minmax}
	\min_{(\lambda,\phi)\in \overline{ \s{K}}_0}{\mathcal A}(\lambda,\phi) = - \min_{(m,w)\in \overline{\s{K}}_1} {\mathcal B}(m,w).
	\end{equation}
	Moreover, the minimum in the right-hand side is achieved by a unique pair $(m,w)\in \overline{\s{K}}_1$ satisfying $(m,w)\in  L^q( \bb{T}^d)\times L^{\frac{r'q}{r'+q-1}}( \bb{T}^d)$. 
\end{lemma}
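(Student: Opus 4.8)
\emph{Overall strategy.} I would read \eqref{minmax} as an instance of the Fenchel--Rockafellar duality theorem, and obtain existence, uniqueness and the extra integrability of the minimizer of $\cB$ by the direct method of the calculus of variations; attainment of the left-hand infimum would then follow from a separate, routine application of the direct method to $\cA$. For the duality set-up, take $(\l,\phi)\in\overline{\s K}_0=\R\times W^{1,pr}(\T^d)$ as primal variable and write $\cA(\l,\phi)=\Phi(\Lambda(\l,\phi))$, where $\Lambda(\l,\phi):=(\l,\nabla\phi)$ is bounded and linear from $\overline{\s K}_0$ into $\R\times L^{pr}(\T^d;\R^d)$ and $\Phi(\ell,b):=\int_{\T^d}F^*\big(x,\ell+H(x,b(x))\big)\dd x-\ell$. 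Since $H(x,\cdot)$ is convex and $F^*(x,\cdot)$ is convex and nondecreasing (because $F(x,\cdot)=+\infty$ on $(-\infty,0)$), $\Phi$ is convex, and by \eqref{eq:hamiltonian_bounds}--\eqref{eq:cost_growth_star} it is finite and continuous at $(0,0)=\Lambda(0,0)$. The Fenchel--Rockafellar theorem then yields $\inf_{\overline{\s K}_0}\cA=\sup_{(\sigma,w)}\big[-\Phi^*(\sigma,w):\ \Lambda^*(\sigma,w)=0\big]$ with no duality gap, and $\Lambda^*(\sigma,w)=(\sigma,-\nabla\cdot w)$, so the constraint reads $\sigma=0$ and $\nabla\cdot w=0$ in the sense of distributions.

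\emph{Identifying the dual problem.} Using $-F^*(x,a)=\inf_{m\ge0}\big(F(x,m)-am\big)$ and $H^{**}=H$, I would rewrite $\Phi^*(\sigma,w)$ as a supremum over $(\ell,b)$ of an infimum over $m\ge0$; after exchanging the two --- a convex/concave min--max argument, legitimate once \eqref{eq:hamiltonian_conjugate_bounds} and \eqref{eq:cost_growth} are used to supply the necessary coercivity/compactness --- one optimizes in $\ell$, which forces $\int_{\T^d}m\dd x=\sigma+1$, and pointwise in $b$, which produces the perspective term $m\,H^*(x,w/m)$ together with the convention \eqref{conventionH*}. This gives $\Phi^*(\sigma,w)=\inf\big\{\int_{\T^d}mH^*(x,w/m)+F(x,m)\dd x:\ m\ge0,\ \int_{\T^d}m\dd x=\sigma+1\big\}$. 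Inserting this into the previous step and replacing $w$ by $-w$ (which preserves divergence-freeness) identifies the dual problem with $-\inf_{\overline{\s K}_1}\cB$, hence establishes \eqref{minmax} at the level of values.

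\emph{Existence, integrability and uniqueness.} Along a minimizing sequence $(m_n,w_n)$ for $\cB$, the lower bounds in \eqref{eq:cost_growth} bound $(m_n)$ in $L^q(\T^d)$ and the lower bound in \eqref{eq:hamiltonian_conjugate_bounds} bounds $\int_{\T^d}m_n|w_n/m_n|^{r'}\dd x$; writing $|w_n|=m_n^{1/r}\big(m_n|w_n/m_n|^{r'}\big)^{1/r'}$ and interpolating by H\"older between these two estimates bounds $(w_n)$ in $L^{s}(\T^d;\R^d)$ with exactly $s=\tfrac{r'q}{r'+q-1}$. Passing to weak limits in these reflexive spaces, using that $(m,w)\mapsto\int_{\T^d}mH^*(x,-w/m)+F(x,m)\dd x$ is a convex lower semicontinuous integral functional (its perspective part is a normal convex integrand) and that $m\ge0$, $\int_{\T^d}m\dd x=1$, $\nabla\cdot w=0$ are weakly closed conditions, one obtains a minimizer in $L^q\times L^s$. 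Uniqueness follows from strict convexity: $F(x,\cdot)$ is strictly convex by (H3), so $m$ is unique; on $\{m>0\}$ the map $\xi\mapsto H^*(x,\xi)$ is strictly convex because $H(x,\cdot)$ is differentiable, while $w\equiv0$ on $\{m=0\}$ by \eqref{conventionH*}, so $w$ is unique too. Finally, the direct method applied to $\cA$ itself --- coercivity from the interplay of $-\l$ with the superlinear lower bound on $F^*(x,\l+H(x,\nabla\phi))$, which also keeps $\nabla\phi$ bounded in $L^{pr}$; the normalization $\int_{\T^d}\phi\dd x=0$ with the Poincar\'e inequality; and weak lower semicontinuity from convexity --- shows that the left-hand infimum is attained, so it is a minimum.

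\emph{Main obstacle.} I expect the crux to be the dual computation: arranging $\Lambda$ and $\Phi$ so that the constraint set $\overline{\s K}_1$ and the exact form of $\cB$ (with the correct sign and the singular convention \eqref{conventionH*}) emerge from $\Phi^*$, and in particular justifying the min--max interchange that converts $F^*$ and $H$ back into $F$ and $H^*$; the Fenchel--Rockafellar qualification condition itself is cheap, but this exchange needs a genuine compactness argument. The appearance of the sharp exponent $s=\tfrac{r'q}{r'+q-1}$ is the only other mildly delicate point, and it is just the H\"older interpolation above once the two a priori bounds are secured.
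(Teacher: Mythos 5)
Your strategy is sound and is, in substance, the same one the paper relies on: the lemma is quoted from \cite{CarGra}, and both there and in the paper's sketch of the time-dependent analogue (Theorem \ref{thm:duality}) the identity \eqref{minmax} comes from the Fenchel--Rockafellar theorem, with existence/uniqueness of the optimal $(m,w)$ obtained from coercivity of $F$ and $H^*$ plus strict convexity, exactly as in your third paragraph (your H\"older interpolation giving the exponent $\frac{r'q}{r'+q-1}$ is the standard one). The genuine difference is the functional setting: \cite{Car15,CarGra} run the duality in spaces of continuous functions, so the dual variables are a priori measures, and a separate argument upgrades the optimal measures to densities in $L^q\times L^{\frac{r'q}{r'+q-1}}$; you instead work in the reflexive pairing $\R\times W^{1,pr}(\T^d)$ versus $\R\times L^{(pr)'}(\T^d;\R^d)$, which is cleaner and has the pleasant feature that $(pr)'=\frac{r'q}{r'+q-1}$, so the sharp integrability of $w$ is built into the duality rather than proved afterwards (this is consistent with the parenthetical identification $L^{(q'r)'}=L^{r'q/(r'+q-1)}$ used in Section \ref{sec:stationary}). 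On the step you rightly single out as the crux: the inf--sup interchange in your computation of $\Phi^*$ can be avoided altogether. If you lift $\lambda$ to the constant function, i.e.\ take $\Lambda(\lambda,\phi)=(\lambda\mathbf{1},\nabla\phi)\in L^{p}(\T^d)\times L^{pr}(\T^d;\R^d)$, then the convex integrand $G(x,u,b)=F^*\bigl(x,u+H(x,b)\bigr)$ is a normal integrand whose conjugate is computed pointwise and decouples (substitute $a=u+H(x,b)$): $G^*(x,\mu,w)=F(x,\mu)+\mu H^*\bigl(x,w/\mu\bigr)$ with precisely the convention \eqref{conventionH*}, while the constraints $\int_{\T^d}m\dd x=1$ and $\nabla\cdot w=0$ now come out of $\Lambda^*$ rather than from optimizing over the scalar $\ell$. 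This removes the only real gap in your sketch (the minimax exchange, which otherwise needs Rockafellar's interchange theorem or an approximation argument as in the cited proofs); the remaining items you list (continuity of $\Phi$ at one point for the qualification condition, lower semicontinuity of the perspective functional, strict convexity of $H^*(x,\cdot)$ from differentiability of $H(x,\cdot)$, and the mean-zero normalization of $\phi$ for attainment of the left-hand minimum) are all correct as stated.
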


\begin{theorem}[\cite{CarGra}] \label{theo:mainexergo} There exists at least one  solution $(\lambda,\phi,m)$ to the stationary MFG system \eqref{MFGergo}. Moreover, the pair $(\lambda,m)$ is unique. 

If $(m,w)\in \overline{\s{K}}_1$ is a minimizer of \eqref{Pb:mw2ergo} and $(\lambda, \phi)\in \overline{ \s{K}}_0$ is a minimizer of \eqref{PB:dual2ergo}, then $(\l,\phi,m)$ is a solution of the MFG system \eqref{MFGergo} and $w= -mD_\xi H(\cdot,\nabla \phi)$ a.e.. 

Conversely, any  solution $(\lambda,\phi, m)$ of \eqref{MFGergo}  is such that the pair $(m,-mD_\xi H(\cdot,\nabla \phi))$ is the minimizer of \eqref{Pb:mw2ergo} while $(\l,\phi)$ is a minimizer of \eqref{DefmathcalA}. 
\end{theorem}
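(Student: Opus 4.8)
The plan is to turn the no‑gap equality $\min{\mathcal A}=-\min{\mathcal B}$ of Lemma~\ref{Lem:dualiteergo} into pointwise Fenchel--Young identities: weak solutions of \eqref{MFGergo} are exactly the pairs of optimizers of \eqref{PB:dual2ergo} and \eqref{Pb:mw2ergo}. From Lemma~\ref{Lem:dualiteergo} we already have everything that is not purely formal: the equality of the values; a unique minimizer $(m,w)\in L^q(\T^d)\times L^{\frac{r'q}{r'+q-1}}(\T^d)$ of \eqref{Pb:mw2ergo}; and a minimizer $(\l,\phi)\in\overline{\s{K}}_0$ of \eqref{PB:dual2ergo}. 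So the remaining work splits into three pieces: produce the MFG solution from the two optimizers, prove the converse, and deduce uniqueness of $(\l,m)$.

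Granting the two minimizers, here is the core step. Since $\tfrac{r'q}{r'+q-1}=(pr)'$, the divergence‑free field $w$ pairs with $\nabla\phi$ for $\phi\in W^{1,pr}(\T^d)$, and that pairing vanishes, $\int_{\T^d}w\cdot\nabla\phi\,\dd x=0$; using this together with $\int_{\T^d}m\,\dd x=1$, I would rewrite
\begin{align*}
0&={\mathcal A}(\l,\phi)+{\mathcal B}(m,w)\\
&=\int_{\T^d}\Big[F^*\big(x,\l+H(x,\nabla\phi)\big)+F(x,m)-m\big(\l+H(x,\nabla\phi)\big)\Big]\dd x\\
&\quad+\int_{\T^d}m\Big[H^*\Big(x,-\tfrac{w}{m}\Big)+H(x,\nabla\phi)+\tfrac{w}{m}\cdot\nabla\phi\Big]\dd x.
\end{align*}
By the Fenchel--Young inequality (for $F,F^*$ in the first integrand, for $H,H^*$ in the second, with the convention \eqref{conventionH*} on $\{m=0\}$) both integrands are $\ge0$, hence vanish a.e. Their equality cases give, on $\{m>0\}$, $\l+H(x,\nabla\phi)=f(x,m)$ and $-w/m=D_\xi H(x,\nabla\phi)$, and on $\{m=0\}$, $w=0$ and $\l+H(x,\nabla\phi)\le f(x,0^+)$. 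These are precisely \eqref{eq:aeergo}, \eqref{eq:distribergo} and the identity $w=-mD_\xi H(\cdot,\nabla\phi)$, while \eqref{eqcontdefergo} is just the constraint $\nabla\cdot w=0$ built into $\overline{\s{K}}_1$; so $(\l,\phi,m)$ is a weak solution in the sense of Definition~\ref{def:weaksolergoMFG}, which proves existence.

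For the converse, given any weak solution $(\l,\phi,m)$ put $w:=-mD_\xi H(\cdot,\nabla\phi)$; then $(m,w)\in\overline{\s{K}}_1$ (using $m\ge0$, $\int m=1$, $mD_\xi H(\cdot,\nabla\phi)\in L^1$, and $\nabla\cdot w=0$ from \eqref{eqcontdefergo}) and $(\l,\phi)\in\overline{\s{K}}_0$. The displayed identity used only $\int w\cdot\nabla\phi=0$ and $\int m=1$, so it still holds; now its second integrand vanishes by the choice of $w$, its first integrand vanishes on $\{m>0\}$ by \eqref{eq:aeergo}, and on $\{m=0\}$ it equals $F^*(x,\l+H(x,\nabla\phi))+F(x,0)\le0$ — because \eqref{eq:distribergo} gives $\l+H(x,\nabla\phi)\le f(x,0^+)$ and the monotonicity of $f(x,\cdot)$ yields $F^*(x,f(x,0^+))=-F(x,0)$. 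Hence ${\mathcal A}(\l,\phi)+{\mathcal B}(m,w)\le0$, while weak duality gives ${\mathcal A}(\l,\phi)+{\mathcal B}(m,w)\ge\min{\mathcal A}+\min{\mathcal B}=0$; so it equals $0$ and both $(\l,\phi)$ and $(m,w)$ are minimizers.

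For uniqueness of $(\l,m)$: by the converse direction the pair $(m,-mD_\xi H(\cdot,\nabla\phi))$ attached to any solution minimizes \eqref{Pb:mw2ergo}, and that minimizer is unique by Lemma~\ref{Lem:dualiteergo}, so $m$ (and $w$) is common to all solutions; integrating \eqref{eq:aeergo} against $m$ and using $\int m=1$, $\int w\cdot\nabla\phi=0$ and $mH(x,\nabla\phi)=-w\cdot\nabla\phi-mH^*(x,-w/m)$ then gives $\l=\int_{\T^d}mf(x,m)\,\dd x+\int_{\T^d}mH^*(x,-w/m)\,\dd x$, an expression in $(m,w)$ only, so $\l$ is unique as well. (Alternatively the Lasry--Lions monotonicity computation — subtract the two Hamilton--Jacobi relations, test against $m_1-m_2$, integrate using the continuity equations and the strict monotonicity of $f$ — yields $m_1=m_2$ and then $\l_1=\l_2$.) The step I expect to be the real obstacle is the one already absorbed into Lemma~\ref{Lem:dualiteergo}: the a priori existence of the value‑function minimizer with $\phi$ genuinely in $W^{1,pr}(\T^d)$, whose direct‑method proof is delicate because the coercivity estimate must balance the linear term $-\l$ against the merely sublinear growth of $F^*(x,\cdot)$ at $-\infty$ to pin $\l_n$ down from below, and because a relaxed (measure/$BV$) minimizer has to be shown to lie in $W^{1,pr}$. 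Within the present argument, the point needing the most care is the degenerate set $\{m=0\}$, where one must note that $f(x,0^+)$ is finite there off a null set (else \eqref{eq:distribergo} fails), hence $F(x,0)<\infty$, and where the Hamilton--Jacobi relation survives only as the inequality \eqref{eq:distribergo}.
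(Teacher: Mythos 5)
The paper itself offers no proof of Theorem \ref{theo:mainexergo} (it is quoted from \cite{CarGra}), so the comparison is with the standard duality argument of that reference, and your forward implication follows it correctly: since Lemma \ref{Lem:dualiteergo} gives the minimizer $(m,w)$ with $w\in L^{\frac{r'q}{r'+q-1}}(\T^d)=L^{(pr)'}(\T^d)$ while $\nabla\phi\in L^{pr}(\T^d)$, the pairing $\int_{\T^d}w\cdot\nabla\phi\,\dd x=0$ and the splitting of $0=\mathcal{A}(\l,\phi)+\mathcal{B}(m,w)$ into the two nonnegative Fenchel--Young defects are legitimate ($mH(\cdot,\nabla\phi)\in L^1$ by H\"older), and the pointwise equality cases indeed yield \eqref{eq:aeergo}, \eqref{eq:distribergo}, $w=-mD_\xi H(\cdot,\nabla\phi)$ and \eqref{eqcontdefergo} (up to the harmless normalization $\int_{\T^d}\phi\,\dd x=0$, which costs nothing since $\mathcal{A}$ depends on $\phi$ only through $\nabla\phi$).

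The genuine gap is in the converse, precisely at the sentence ``the displayed identity used only $\int w\cdot\nabla\phi=0$ and $\int m=1$, so it still holds.'' For a weak solution in the sense of Definition \ref{def:weaksolergoMFG}, the field $w:=-mD_\xi H(\cdot,\nabla\phi)$ is only known to be in $L^1(\T^d)$, so neither the integrability of $w\cdot\nabla\phi$ nor the identity $\int_{\T^d}w\cdot\nabla\phi\,\dd x=0$ is available: mollification gives $\int_{\T^d}w\cdot\nabla(\phi\ast\rho_\e)\,\dd x=0$, but passing to the limit requires $w\in L^{(pr)'}$ (or $\phi$ Lipschitz), and under (H1) alone $|D_\xi H(x,\xi)|$ may grow like $|\xi|^{r}$, so $mD_\xi H(\cdot,\nabla\phi)\cdot\nabla\phi$ need not even be integrable. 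The same issue affects the finiteness of $\int mH^*(x,-w/m)\,\dd x$, hence of $\mathcal{B}(m,w)$, and your integral formula for $\l$; and since your uniqueness of $(\l,m)$ leans on the converse, it inherits the gap. This is exactly where the cited proof does real work: one must extract extra summability of $w$ from the pointwise relation $\l+H(x,\nabla\phi)=f(x,m)$ on $\{m>0\}$ (which gives $|\nabla\phi|^r\le C(m^{q-1}+1)$ there) combined with a bound of the type $|D_\xi H(x,\xi)|\le C(1+|\xi|^{r-1})$ -- in which case $w\in L^{(pr)'}$ and your computation closes -- or else run a truncation argument using cut-offs of $\phi$ as test functions in \eqref{eqcontdefergo}. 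Finally, the obstacle you flag as the ``real'' one (existence of a dual minimizer with $\phi\in W^{1,pr}$) is already contained in the quoted Lemma \ref{Lem:dualiteergo}, so it is not where your argument is incomplete.
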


\subsection{Preliminary results on the time dependent system}
Let us recall the time dependent MFG system that we will study.

\begin{equation} \label{eq:mfg}\left\{
\begin{array}{cll}
{\rm{(i)}} & -\partial_t\phi + H(x,\nabla \phi) = f(x,m), & {\rm{in\ }} (0,T)\times\T^d,  \\
(ii) & \partial_t m - \nabla\cdot \left(mD_\xi H(x,\nabla \phi)\right) = 0, & {\rm{in\ }} (0,T)\times\T^d, \\
(iii) & \phi(T,x) = \phi_T(x), m(0,x) = m_0(x), & {\rm{in\ }} \T^d.
\end{array}\right.
\end{equation}
First, we need to impose some additional standard assumption on the initial and final data.

\begin{enumerate}
\item[(H4)] (Conditions on the initial and final conditions) $m_0$ is a probability measure on $\bb{T}^d$ which is absolutely continuous with respect to Lebesgue measure, having density (which we also call $m_0$) in $C(\bb{T}^d)$. We suppose moreover that $m_0>0$ on $\bb{T}^d$. 
	We  assume that $\phi_T:\bb{T}^d\to \bb{R}$ is a Lipschitz continuous function on $\bb{T}^d$.
\end{enumerate}

\begin{definition} \label{def:weak}
	A pair $(\phi,m) \in BV((0,T) \times \bb{T}^d) \times L^q((0,T) \times \bb{T}^d)$ is called a weak solution to the system (\ref{eq:mfg}) if it satisfies the following conditions.
	\begin{enumerate}
		\item $\nabla \phi\in L^r([0,T]\times\T^d)$ and  the maps $mf(\cdot, m)$, $mH^*\left(\cdot,-D_\xi H(\cdot,\nabla \phi)\right) $ and   $mD_\xi H(\cdot,\nabla \phi)$ are integrable, 
		\item $\phi$ satisfies  a first-order Hamilton-Jacobi inequality
		\begin{equation} \label{eq:hjb_weak}
		-\partial_t \phi + H(x,\nabla \phi) \leq  f(x,m) 
		\end{equation}
		in the sense of distributions (tested against smooth non-negative functions), the boundary condition $\phi(T,\cdot)\leq \phi_T$ in the sense of trace and the following equality
		\begin{multline}
		\int_0^T \int_{\bb{T}^d}m(t,x) \left[ H(x,\nabla \phi(t,x))-\nabla \phi(t,x)\cdot D_\xi H(x, \nabla \phi(t,x)) -f(x,m(t,x))\right]\dd x\dd t \\
		= \int_{\bb{T}^d} (\phi_T(x)m(T,x))-\phi(0,x)m_0(x))\dd x \label{eq:ibp_weak}
		\end{multline}
		
		\item $m$ satisfies the continuity equation
		\begin{equation} \label{eq:continuity_weak}
		\partial_t m - \nabla\cdot(mD_\xi H(x,\nabla \phi)) = 0 ~~~\ {\rm{in}}~~(0,T) \times \bb{T}^d, ~~~ m(0,\cdot) = m_0,
		\end{equation}
		in the sense of distributions.
	\end{enumerate}
\end{definition}

We consider two optimal control problems which are in duality, see \cite{Car15}. By the abuse of notion, we use the same notations $\cA$ and $\cB$ for the two functionals as in the stationary case.
First, the control of the continuity equation reads as: minimize
\begin{equation}
\s{B}(m,w) = \int_0^T \int_{\bb{T}^d} mH^*\left(x,-\frac{w}{m}\right) + F(x,m) \dd x \dd t + \int_{\bb{T}^d} \phi_T m(T) \dd x
\end{equation}
over $\s{K}_1 = \{(m,w) \in L^1(\bb{T}^d) \times L^1(\bb{T}^d;\bb{R}^d) : \partial_t m + \nabla \cdot w = 0, \ m(0) = m_0 \}$, where the continuity equation holds in the sense of distributions.
(As usual, for $(m,w) \in \s{K}_1$, the measure-valued function $[0,T]\ni t\mapsto m(t)\in\sP(\T^d)$ is continuous in time in the sense of weak (narrow) convergence of probability measures, cf.~\cite{AmbCri}).

Second, we control the Hamilton-Jacobi equation: minimize
\begin{equation}
\s{A}(\phi) = \int_0^T \int_{\bb{T}^d} F^*(x,-\partial_t\phi + H(x,\nabla \phi))\dd x \dd t - \int_{\bb{T}^d} \phi(0)m_0 \dd x
\end{equation}
over the set $\s{K}_0$ consisting of maps $\phi \in C^1([0,T] \times \bb{T}^d)$ such that $\phi(T,\cdot) = \phi_T$.

Set $E_1 = C^1([0,T] \times \bb{T}^d), E_0 = C([0,T] \times \bb{T}^d;\bb{R}) \times C([0,T] \times \bb{T}^d;\bb{R}^d)$.
On $E_1$ we define
\begin{equation}
\s{F}(\phi) = -\int_{\bb{T}^d} \phi(0)m_0 + \chi_{\s{K}_0}(\phi).
\end{equation}
On $E_0$ define
\begin{equation}
\s{G}(a,b) = \int_0^T \int_{\bb{T}^d} F^*(x,-a(t,x) + H(x,b(t,x))) \dd x \dd t.
\end{equation}
Then $\s{A}$ can be written $\s{A}(\phi) = \s{F}(\phi) + \s{G}(\Lambda(\phi))$ where $\Lambda : E_1 \to E_0$ is given by $\Lambda(\phi) = (\partial_t \phi,\nabla \phi)$.

The two problems are in duality:
\begin{theorem}[cf.~\cite{Car15}] \label{thm:duality}
	
	We have
	\begin{equation} \label{eq:duality}
	\inf_{\phi \in \s{K}_0} \s{A}(\phi) = -\min_{(m,w) \in \s{K}_1} \s{B}(m,w).
	\end{equation}
	Moreover, the minimum on the right-hand side is achieved by a unique pair $(m,w) \in \s{K}_1$ which must satisfy $m \in L^q([0,T] \times \bb{T}^d)$ and $w \in L^{\frac{r'q}{r'+q-1}}([0,T] \times \bb{T}^d)$.
\end{theorem}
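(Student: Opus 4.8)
The plan is to read the identity \eqref{eq:duality} off the Fenchel--Rockafellar duality theorem applied to the splitting $\s{A}(\phi)=\s{F}(\phi)+\s{G}(\Lambda\phi)$ on the Banach spaces $E_1$ (with the $C^1$-norm) and $E_0$ (with the uniform norm), whose dual $E_0'$ is the space of scalar and $\R^d$-valued Radon measures on $[0,T]\times\T^d$. First I would check the hypotheses: $\s{F}$ is convex, being a continuous linear functional plus the indicator of the affine subspace $\s{K}_0$; $\s{G}$ is convex because $H(x,\cdot)$ is convex while $F^*(x,\cdot)$ is convex and nondecreasing, so $(a,b)\mapsto F^*(x,-a+H(x,b))$ is convex; and $\s{G}$ is finite and continuous on all of $E_0$ for the uniform norm, by continuity of $H$ and $F^*$ together with the growth bound \eqref{eq:cost_growth_star}. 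The qualification condition then only requires $\s{K}_0\neq\emptyset$; since $\phi_T$ in (H4) is merely Lipschitz, I would first prove the identity assuming $\phi_T\in C^1$ (so that, e.g., $\bar\phi(t,x):=\phi_T(x)$ belongs to $\s{K}_0$) and then recover the general case by approximating $\phi_T$ uniformly by smooth functions and passing to the limit in both sides.

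The computational heart is the identification of the two conjugates. Writing $\Lambda^*(m,w)$ for the distribution $\phi\mapsto\int_0^T\!\!\int_{\T^d}\partial_t\phi\,\dd m+\int_0^T\!\!\int_{\T^d}\nabla\phi\cdot\dd w$ and decomposing $\phi=\bar\phi+\psi$ with $\psi\in C^1$, $\psi(T,\cdot)=0$, one sees that $\sup_\psi\{\langle\Lambda^*(m,w),\psi\rangle+\int_{\T^d}\psi(0)m_0\}$ is finite (hence equal to $0$) precisely when $\partial_t m+\nabla\cdot w=0$ in $(0,T)\times\T^d$ and $m(0)=m_0$ in the sense of distributions, i.e. $(m,w)\in\s{K}_1$; an integration by parts using $\bar\phi(T,\cdot)=\phi_T$ then gives $\s{F}^*(\Lambda^*(m,w))=\int_{\T^d}\phi_T\,\dd m(T)$ on $\s{K}_1$ and $+\infty$ elsewhere. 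For $\s{G}^*(-(m,w))$ I would substitute $c=-a+H(x,b)$ and invoke the interchange of supremum and integral for convex integral functionals: the part $\sup_c\{\int\!\!\int c\,\dd m-\int\!\!\int F^*(x,c)\,\dd x\,\dd t\}$ equals $\int\!\!\int F(x,m)$, which is finite only if $m\ge 0$ and $m$ is absolutely continuous (since $F^*$ is bounded below, its recession function forces the value $+\infty$ on the negative and singular parts of $m$), and the part $\sup_b\{-\int\!\!\int H(x,b)\,\dd m-\int\!\!\int b\cdot\dd w\}$ equals $\int\!\!\int mH^*(x,-w/m)$, with the convention \eqref{conventionH*} on the set $\{m=0\}$ and where the superlinear growth $r'>1$ of $H^*$ rules out any singular part of $w$. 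Adding the pieces, $-\s{F}^*(\Lambda^*(m,w))-\s{G}^*(-(m,w))$ equals $-\s{B}(m,w)$ on $\s{K}_1$ and $-\infty$ elsewhere, so Fenchel--Rockafellar yields $\inf_{\s{K}_0}\s{A}=\sup_{\s{K}_1}(-\s{B})=-\inf_{\s{K}_1}\s{B}$, which is \eqref{eq:duality} once attainment on the right is established.

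Attainment and the remaining properties I would obtain by the direct method applied to $\s{B}$: the lower bounds \eqref{eq:hamiltonian_conjugate_bounds} and \eqref{eq:cost_growth} make $\s{B}$ coercive, because along a minimizing sequence $(m_n,w_n)$ the term $\int\!\!\int F(x,m_n)$ bounds $\|m_n\|_{L^q}$, the term $\int\!\!\int m_nH^*(x,-w_n/m_n)\ge\frac{1}{r'C}\int\!\!\int|w_n|^{r'}m_n^{1-r'}-C$ combined with that bound and H\"older's inequality bounds $\|w_n\|_{L^{r'q/(r'+q-1)}}$, and $\int_{\T^d}\phi_T\,\dd m_n(T)$ is controlled because $\phi_T$ is bounded and $\int m_n(T)=1$. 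Passing to weak limits, and using that $(m,w)\mapsto\int\!\!\int[mH^*(x,-w/m)+F(x,m)]$ is convex and lower semicontinuous while $t\mapsto m_n(t)$ is narrowly continuous and $\phi_T$ is continuous (to handle the boundary term), produces a minimizer in $L^q\times L^{r'q/(r'+q-1)}$. Uniqueness is a strict-convexity argument: $F(x,\cdot)$ is strictly convex by (H3), so two minimizers must share the same $m$; for that fixed $m$, on $\{m>0\}$ the map $w\mapsto mH^*(x,-w/m)$ is strictly convex because differentiability of $H(x,\cdot)$ forces $H^*(x,\cdot)$ to have no affine pieces, while on $\{m=0\}$ finiteness of $\s{B}$ forces $w=0$, so $w$ is unique as well.

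I expect the main obstacle to be the rigorous evaluation of $\s{G}^*$ together with the absolute-continuity and sign constraints it must encode: this needs the measurable-selection / integral-functional machinery to exchange $\sup$ and $\int$, and a careful recession-function analysis showing that any Radon-measure competitor with a nontrivial singular part (or, for $m$, a negative part) sends the conjugate to $+\infty$. The Lipschitz-versus-$C^1$ mismatch for $\phi_T$ is a secondary technical nuisance, handled by the approximation indicated above, and the coercivity and lower-semicontinuity bookkeeping in the direct method is routine given the growth assumptions.
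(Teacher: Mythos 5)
Your proposal is correct and follows essentially the same route as the paper, which simply invokes the Fenchel--Rockafellar theorem for the splitting $\s{A}=\s{F}+\s{G}\circ\Lambda$ and defers the computation of the conjugates, the integrability of the minimizer, and uniqueness to \cite{Car15}. The details you supply (identification of $\s{F}^*(\Lambda^*(m,w))$ and $\s{G}^*(-(m,w))$ via recession-function arguments, coercivity giving $m\in L^q$, $w\in L^{r'q/(r'+q-1)}$, and strict convexity for uniqueness) are exactly the content of the cited reference.
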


\begin{proof}
	Use the Fenchel-Rockafellar Theorem, which shows that
	$$
	\inf_\phi \s{F}(\phi) + \s{G}(\Lambda(\phi)) = \max_{(m,w)} -\s{G}^*(-(m,w)) - \s{F}^*(\Lambda^*(m,w)).
	$$
	The right-hand side is just $-\min \s{B}$.
	See \cite{Car15} for details.
\end{proof}

We can relax the second problem in the following way.
$\s{K}$ will be defined as the set of all pairs $(\phi,\alpha) \in BV([0,T]\times\T^d) \times L^1([0,T]\times\T^d)$ such that $\nabla \phi \in L^r([0,T] \times \bb{T}^d)$,
$\phi(T,\cdot) \leq  \phi_T$ in the sense of traces, $\alpha_+ \in L^p((0,T) \times \bb{T}^d)$, $\phi \in L^\infty([t,T]\times\bb{T}^d)$ for every $t \in (0,T)$, and
\begin{equation} \label{eq:inequality_in_distribution}
-\partial_t \phi + H(x,\nabla \phi) \leq \alpha.
\end{equation}
in the sense of distributions.
Then on $\s{K}$ we define the functional
$$
\s{A}(\phi,\alpha) = \int_0^T \int_{\bb{T}^d} F^*(x,\alpha(t,x))\dd x \dd t - \int_{\bb{T}^d} \phi(0,x)m_0(x)\dd x.
$$
We have the following, due to \cite{CarGra}.
\begin{theorem}
	$\ds\inf_{\phi \in \s{K}_0} \s{A}(\phi) = \min_{(\phi,\alpha) \in \s{K}} \s{A}(\phi,\alpha)$, the latter attained by some $(\phi,\alpha) \in \s{K}$ of which $\phi$ can be obtained as a limit of smooth functions $\phi_n$.
\end{theorem}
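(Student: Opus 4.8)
The plan is to prove equality of the two values by establishing both inequalities and then to obtain attainment by the direct method. The inequality $\inf_{\s{K}}\s{A}(\cdot,\cdot)\le\inf_{\s{K}_0}\s{A}$ is the easy one: given $\phi\in\s{K}_0$ with $\s{A}(\phi)<\infty$, I would set $\alpha:=-\partial_t\phi+H(\cdot,\nabla\phi)$ and check that $(\phi,\alpha)\in\s{K}$ --- smoothness gives $\nabla\phi\in L^r$ and $\phi\in L^\infty$, the terminal condition holds, the differential inequality holds with equality, and $\alpha_+\in L^p$ follows from $\int\!\!\int F^*(x,\alpha)<\infty$ together with the coercivity $F^*(x,a)\ge\frac1{pC}|a|^p-C$ for $a\ge0$. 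Since $\s{A}(\phi,\alpha)=\s{A}(\phi)$, taking the infimum gives the claim.

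For the reverse inequality I would use weak duality against the dual problem from Theorem~\ref{thm:duality}. Let $(m,w)\in\s{K}_1$ be the (unique) minimizer of $\s{B}$, so $m\in L^q$ and $w\in L^{r'q/(r'+q-1)}$. For any $(\phi,\alpha)\in\s{K}$ the pointwise Fenchel--Young inequalities $F^*(x,\alpha)+F(x,m)\ge\alpha m$ and $mH^*(x,-w/m)+mH(x,\nabla\phi)\ge-w\cdot\nabla\phi$, together with the constraint $-\partial_t\phi+H(x,\nabla\phi)\le\alpha$ multiplied by $m\ge0$, yield after integration
\[
\s{A}(\phi,\alpha)+\s{B}(m,w)\ \ge\ \int_0^T\!\!\int_{\T^d}\!\big(m(-\partial_t\phi)-w\cdot\nabla\phi\big)\dd x\dd t+\int_{\T^d}\phi_T\,m(T)\dd x-\int_{\T^d}\phi(0)\,m_0\dd x.
\]
Testing the continuity equation $\partial_t m+\nabla\cdot w=0$ against $\phi$ --- justified by truncating in time to $[\e,T]$, using the narrow continuity of $t\mapsto m(t)$, the bound $\phi\in L^\infty([\e,T]\times\T^d)$, and letting $\e\to0^+$ with $m(0)=m_0\in C(\T^d)$ --- rewrites the first integral as $-\int\phi(T)m(T)+\int\phi(0)m_0$, so the right-hand side becomes $\int_{\T^d}m(T)(\phi_T-\phi(T))\dd x\ge0$ because $\phi(T,\cdot)\le\phi_T$ and $m(T)\ge0$. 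Hence $\s{A}(\phi,\alpha)\ge-\s{B}(m,w)=\inf_{\s{K}_0}\s{A}$ by Theorem~\ref{thm:duality}, and the infimum over $\s{K}$ gives the matching bound.

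For attainment I would take a minimizing sequence $\phi_n\in\s{K}_0$ of smooth functions (legitimate, since the two infima agree) and set $\alpha_n:=-\partial_t\phi_n+H(\cdot,\nabla\phi_n)$. From $\sup_n\s{A}(\phi_n)<\infty$ and the coercivity of $F^*$ one controls $(\alpha_n)_+$ in $L^p$; a comparison/representation estimate for the Hamilton--Jacobi subsolution $\phi_n$ with terminal datum $\phi_T$ then bounds $\int\phi_n(0)m_0$ from below and $\partial_t\phi_n$ in $L^1$; integrating the equation and invoking $H(x,\xi)\ge\frac1{rC}|\xi|^r-C$ bounds $\nabla\phi_n$ in $L^r$, and hence $\phi_n$ in $BV([\e,T]\times\T^d)\cap L^\infty([\e,T]\times\T^d)$ for each fixed $\e>0$. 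I would then extract a diagonal subsequence with $\phi_n\to\phi$ in $L^1_{{\rm loc}}((0,T]\times\T^d)$ and a.e., $\nabla\phi_n\weakly\nabla\phi$ in $L^r$, and $(\alpha_n)_+$ weakly convergent in $L^p$; one verifies that $(\phi,\alpha)\in\s{K}$ (the differential inequality survives by convexity of $H(x,\cdot)$ and weak lower semicontinuity, the traces pass, $\phi\in L^\infty([\e,T]\times\T^d)$), and that $\s{A}$ is lower semicontinuous along the sequence, giving $\s{A}(\phi,\alpha)\le\liminf_n\s{A}(\phi_n)=\inf$. By construction $\phi$ is a limit of the smooth $\phi_n$.

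The hard part will be entirely in the attainment step: (a) the a priori bounds on $\nabla\phi_n$ and on $\phi_n$ itself hinge on a quantitative comparison bound for subsolutions of the first-order Hamilton--Jacobi equation with merely $L^p$ right-hand side --- this is exactly why one can only keep $\phi\in L^\infty([\e,T]\times\T^d)$ for $\e>0$, not up to $t=0$; and (b) the lower semicontinuity of the linear term $-\int_{\T^d}\phi_n(0)m_0$, i.e.\ the convergence of the initial traces of the $\phi_n$, does not follow from weak-$BV$ compactness alone and must be recovered from the variational structure (using $m_0\in C(\T^d)$ and $m_0>0$). Both of these points are carried out in detail in \cite{Car15,CarGra}, which I would follow.
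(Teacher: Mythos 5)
Your proposal is correct and follows essentially the same route as the paper, which invokes the relaxation argument of \cite{Car15,CarGra}: the value identity via weak duality against the minimizer of $\s{B}$, and attainment by the direct method on a smooth minimizing sequence $\phi_n\in\s{K}_0$, with the a priori bounds on $(\alpha_n)_+$, on $-\int_{\T^d}\phi_n(0)m_0\dd x$ and on $\nabla\phi_n$ obtained from the coercivity of $F^*$, the positivity of $m_0$ and the Hamilton--Jacobi estimate of Theorem \ref{thm:a priori bounds for hj} (exactly the mechanism the paper uses to drop the restriction $r>d(q-1)$ of \cite{CarGra}). The only step you state too lightly is the pairing of the continuity equation with the merely-$BV$ competitor $\phi$ in the weak-duality inequality, where $m\,\partial_t\phi$ and $w\cdot\nabla\phi$ are not a priori well-defined or integrable and one must mollify $\phi$ in space--time and use the convexity of $H$ before truncating in time; this regularization is precisely what is carried out in the references you defer to.
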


Minimizers of $\s{A}(\phi,\alpha)$ and $\s{B}(m,w)$ precisely characterize weak solutions to the MFG system.

\begin{theorem}[Existence and (partial) uniqueness of weak solutions] \label{thm:minimizers_weak}
	${\rm{(i)}}$ If $(m,w) \in \s{K}_1$ is a minimizer of $\s{B}$ and $(\phi,\alpha) \in {\s{K}}$ is a minimizer of $\s{A}$, then $(\phi,m)$ is a weak solution of (\ref{eq:mfg}) and $\alpha(t,x) = f(x,m(t,x))$ almost everywhere. 
	
	${\rm{(ii)}}$ Conversely, if $(\phi,m)$ is a weak solution of (\ref{eq:mfg}), then there exist functions $w, \alpha$ such that $(\phi,\alpha) \in \s{K}$ is a minimizer of $\s{A}$ and $(m,w) \in \s{K}_1$ is a minimizer of $\s{B}$.
	
	${\rm{(iii)}}$ If $(\phi,m)$ and $(\phi',m')$ are both weak solutions to (\ref{eq:mfg}), then $m = m'$ almost everywhere while $\phi = \phi'$ almost everywhere in the set $\{m > 0\}$.
\end{theorem}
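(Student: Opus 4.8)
\emph{Strategy.} The assertion is a verification result built on convex duality. Theorem~\ref{thm:duality} together with the relaxation theorem above gives $\inf_{\s{K}_0}\s{A}=\min_{\s{K}}\s{A}=-\min_{\s{K}_1}\s{B}$, hence for every $(\phi,\alpha)\in\s{K}$ and every $(m,w)\in\s{K}_1$ the weak-duality bound $\s{A}(\phi,\alpha)+\s{B}(m,w)\geq0$, with equality precisely when both are minimizers. The plan is to exhibit the left-hand side as a sum of manifestly nonnegative pieces and to read off what their vanishing means. For this I would test the Hamilton--Jacobi inequality~\eqref{eq:inequality_in_distribution} against the nonnegative density $m$ and the continuity equation $\partial_t m+\nabla\cdot w=0$ against $\phi$ — both legitimate once one regularizes, using that $\phi$ is a limit of the smooth functions $\phi_n$ supplied by the relaxation theorem — to obtain
$$\int_0^T\!\!\int_{\T^d}m\,\alpha\,\dd x\dd t\;\geq\;\int_0^T\!\!\int_{\T^d}\big(mH(x,\nabla\phi)+\nabla\phi\cdot w\big)\dd x\dd t+\int_{\T^d}\big(\phi(0)m_0-\phi(T)m(T)\big)\dd x .$$

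\emph{Proof of (i).} If $(\phi,\alpha)\in\s{K}$ and $(m,w)\in\s{K}_1$ are minimizers then $\s{A}(\phi,\alpha)+\s{B}(m,w)=0$. Bounding $\int F^*(x,\alpha)+\int F(x,m)\geq\int m\alpha$ by the Fenchel--Young inequality for $F$, and $\int mH^*(x,-w/m)\geq-\int\nabla\phi\cdot w-\int mH(x,\nabla\phi)$ by the Fenchel--Young inequality for $H$ (with the convention~\eqref{conventionH*}), and then using the displayed inequality, one gets the chain
$$0=\s{A}(\phi,\alpha)+\s{B}(m,w)\;\geq\;\int_{\T^d}\big(\phi_T-\phi(T)\big)m(T)\,\dd x\;\geq\;0 ,$$
so every inequality above is an equality. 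This forces: $\alpha=f(x,m)$ a.e.\ (Fenchel--Young equality for $F$, since $f=\partial_m F$); $w=-mD_\xi H(x,\nabla\phi)$ a.e., with $w=0$ a.e.\ on $\{m=0\}$ (Fenchel--Young equality for $H$); $\phi(T,\cdot)=\phi_T$ $m(T)$-a.e.; and $-\partial_t\phi+H(x,\nabla\phi)=f(x,m)$ a.e.\ on $\{m>0\}$ (equality in the tested Hamilton--Jacobi inequality). Substituting $\alpha=f(x,m)$ and $w=-mD_\xi H(x,\nabla\phi)$ back into $\s{A}(\phi,\alpha)+\s{B}(m,w)=0$, and using the Fenchel--Young equalities $F^*(x,f(x,m))+F(x,m)=mf(x,m)$ and $H^*(x,D_\xi H(x,\nabla\phi))=\nabla\phi\cdot D_\xi H(x,\nabla\phi)-H(x,\nabla\phi)$, reproduces exactly the integration-by-parts identity~\eqref{eq:ibp_weak}. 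Together with~\eqref{eq:hjb_weak} (inherited from $(\phi,\alpha)\in\s{K}$, now with right-hand side $f(x,m)$), the trace inequality $\phi(T,\cdot)\leq\phi_T$, the continuity equation from $\s{K}_1$, and the integrability of $mf(\cdot,m)$, $mH^*(\cdot,-D_\xi H(\cdot,\nabla\phi))$ and $mD_\xi H(\cdot,\nabla\phi)$, which holds for the $\s{B}$-minimizer (see~\cite{Car15,CarGra}), this shows that $(\phi,m)$ is a weak solution of~\eqref{eq:mfg} and $\alpha=f(\cdot,m)$ a.e.

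\emph{Proof of (ii).} Conversely, let $(\phi,m)$ be a weak solution and set $\alpha:=f(\cdot,m)$ and $w:=-mD_\xi H(\cdot,\nabla\phi)$ (the set $\{m=0\}$, where the relevant integrands are inert, is dealt with as in~\cite{Car15,CarGra}). Then $(m,w)\in\s{K}_1$ is immediate from Definition~\ref{def:weak}(3) and $\int_{\T^d}m(t)=1$, and $(\phi,\alpha)\in\s{K}$ follows from $\nabla\phi\in L^r$, $\phi(T,\cdot)\leq\phi_T$, the bound $\alpha_+\leq C(m^{q-1}+1)\in L^p$ coming from~\eqref{eq:coupling_growth} and $m\in L^q$, the distributional inequality~\eqref{eq:hjb_weak}, and the local $L^\infty$ bound on $\phi$ away from $t=0$ established in~\cite{CarGra}. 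A direct computation of $\s{A}(\phi,\alpha)+\s{B}(m,w)$ — replacing $F^*(x,\alpha)+F(x,m)$ by $mf(x,m)$ and $mH^*(x,-w/m)$ by $m\big(\nabla\phi\cdot D_\xi H(x,\nabla\phi)-H(x,\nabla\phi)\big)$ via the Fenchel--Young equalities, and then invoking~\eqref{eq:ibp_weak} — collapses the sum to $0$. By weak duality, both $(\phi,\alpha)$ and $(m,w)$ are minimizers.

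\emph{Proof of (iii), and the main obstacle.} By part (ii), $(m,-mD_\xi H(\cdot,\nabla\phi))$ and $(m',-m'D_\xi H(\cdot,\nabla\phi'))$ both minimize $\s{B}$; the uniqueness in Theorem~\ref{thm:duality} gives $m=m'$ a.e.\ and $D_\xi H(x,\nabla\phi)=D_\xi H(x,\nabla\phi')$ a.e.\ on $\{m>0\}$. By part (i), both $\phi$ and $\phi'$ satisfy $-\partial_t\psi+H(x,\nabla\psi)=f(x,m)$ a.e.\ on $\{m>0\}$ and equal $\phi_T$ on $\mathrm{supp}\,m(T)$; since on $\{m>0\}$ the difference $\phi-\phi'$ is then transported by the common drift $-D_\xi H(x,\nabla\phi)$ along the characteristics of the continuity equation and carries zero terminal value there, while $m$ itself is advected by this drift, one concludes $\phi=\phi'$ a.e.\ on $\{m>0\}$ (equivalently, on $\mathrm{supp}\,m$ the function $\phi$ is characterized as the value function of the control problem with running cost $f(\cdot,m)$ and terminal cost $\phi_T$, which depends on $m$ alone; cf.~\cite{Car15,CarGra}). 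The genuinely delicate point of the whole argument is the displayed inequality in the strategy, i.e.\ the legitimacy of testing the Hamilton--Jacobi inequality against $m$ and integrating by parts in the continuity equation when $\phi$ is only $BV$ and $w$ is borderline integrable; this has to be carried out on the smooth approximations $\phi_n$ and stabilized in the limit, keeping track of the convergence of $\partial_t\phi_n$, $\nabla\phi_n$ and of $-\partial_t\phi_n+H(x,\nabla\phi_n)$. The identification $\phi=\phi'$ on $\{m>0\}$ in part (iii) is a second point needing care, since $H$ is not assumed strictly convex.
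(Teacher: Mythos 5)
You should note at the outset that the paper itself does not prove Theorem \ref{thm:minimizers_weak}: it is imported from \cite{Car15,CarGra,CarGraPorTon}, and the only argument the authors supply is the modification (via Theorem \ref{thm:a priori bounds for hj}) needed to drop the restriction $r>d(q-1)$ when constructing minimizers of the relaxed problem. Your treatment of (i) and (ii) follows exactly the route of those references: weak duality $\s{A}(\phi,\alpha)+\s{B}(m,w)\ge 0$, decomposition of the sum into the two Fenchel--Young defects plus the terminal term $\int(\phi_T-\phi(T))m(T)$, the crossed ``testing'' inequality justified on the smooth approximations $\phi_n$, and the verification computation for the converse. As a sketch this is sound; the one point to flag in (i) is that on $\{m=0\}$ the Fenchel--Young equality for $F$ only yields $\alpha\in\partial F(x,\cdot)(0)$, i.e.\ $\alpha\le f(x,0^+)$, so the asserted identity $\alpha=f(\cdot,m)$ a.e.\ on that set requires the additional argument given in \cite{CarGra} rather than following immediately.

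The genuine gap is in (iii). Uniqueness of $m$ (and of $w$) via uniqueness of the $\s{B}$-minimizer is fine, but your mechanism for $\phi=\phi'$ a.e.\ on $\{m>0\}$ --- transport of $\phi-\phi'$ by the common drift ``along characteristics,'' or equivalently identifying $\phi$ on $\mathrm{supp}\,m$ with the value function of the control problem with running cost $f(\cdot,m)$ --- is not available at this level of regularity: $\phi$ is only $BV$ with $\nabla\phi\in L^r$, the drift $-D_\xi H(\cdot,\nabla\phi)$ is merely measurable (no Lipschitz or DiPerna--Lions structure is assumed), $H$ is convex but not strictly convex so one cannot even infer $\nabla\phi=\nabla\phi'$ $m$-a.e.\ from $D_\xi H(x,\nabla\phi)=D_\xi H(x,\nabla\phi')$, and the paper itself emphasizes that the control problem \eqref{prob:control} is not meaningful when $f$ is a local function of an $L^q$ density. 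The argument in the cited papers is purely variational: since $\alpha=\alpha'=f(\cdot,m)$ and the maximum of two distributional subsolutions of the convex Hamilton--Jacobi inequality is again a subsolution, $(\phi\vee\phi',\alpha)\in\s{K}$ is a competitor with $\s{A}(\phi\vee\phi',\alpha)\le\s{A}(\phi,\alpha)$, hence also a minimizer; then the testing identity with equality on subintervals, namely
$\int_{\T^d}\psi(t)m(t)\dd x=\int_{\T^d}\phi_T m(T)\dd x+\int_t^T\!\!\int_{\T^d}\bigl[f(x,m)m+mH^*(x,-w/m)\bigr]\dd x\dd s$ for a.e.\ $t$,
holds for $\psi=\phi$, $\psi=\phi'$ and $\psi=\phi\vee\phi'$ against the common $(m,w)$, because its right-hand side depends on $(m,w)$ only. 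Since $\phi\vee\phi'\ge\phi,\phi'$ and $m\ge0$, the equality of these integrals forces $\phi=\phi\vee\phi'=\phi'$ $m$-a.e. Note that your crossed comparison alone would only give equality of the integrals $\int\phi(t)m(t)\dd x=\int\phi'(t)m(t)\dd x$, not the pointwise statement; it is the maximum-of-subsolutions step that upgrades it. As written, part (iii) of your proposal is asserted rather than proved and should be replaced by this (or an equivalent duality-based) argument.
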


Before we continue, let us remark that in \cite{CarGra}, Theorem \ref{thm:minimizers_weak} holds only under the extra assumption $r > d(q-1)$.
The reason for this assumption was to have a priori upper bounds on the solution $\phi$.
However, in \cite{CarGraPorTon} it was shown that this is not necessary; one has in general a priori bounds on the positive part $\phi_+$ in $L^\eta$ for some $\eta > 1$.
In particular, we have
\begin{theorem}[cf. Theorem 3.3 in \cite{CarGraPorTon}]\label{thm:a priori bounds for hj}
	Suppose $\phi \in L^r((0,T);W^{1,r}(\bb{T}^d))$ satisfies, in the sense of distributions,
	\begin{equation}
	\left\{
	\begin{array}{lrl}
	{\rm{(i)}} & -\partial_t \phi + c_0|\nabla \phi|^r \leq \alpha, & {\rm{in\ }}(0,T)\times\T^d,\\[5pt]
	{\rm{(ii)}} & \phi(T,x) \leq \phi_T(x), & {\rm{a.e.\ in\ }}\T^d,
	\end{array}\right.
	\end{equation}
	for some $r > 1, c_0 > 0, \alpha \in L^p([0,T] \times \bb{T}^d)$ and $\phi_T \in L^\infty(\bb{T}^d)$.
	Then there exists a constant $C = C(p,d,r,c_0,T,\|\alpha\|_{L^p([0,T] \times \bb{T}^d)},\|\phi_T\|_{L^\eta(\bb{T}^d)})$ such that
	\begin{equation}
	\|\phi_+\|_{L^\infty([0,T];L^\eta(\bb{T}^d))} + \|\phi_+\|_{L^\gamma([0,T] \times \bb{T}^d)} \leq C
	\end{equation}
	where $\eta = \frac{d(r(p-1)+1)}{d-r(p-1)}$ and $\gamma = \frac{rp(1+d)}{d-r(p-1)}$ if $p < 1 + \frac{d}{r}$ and $\eta = \gamma = +\infty$ if $p > 1 + \frac{d}{r}$.
\end{theorem}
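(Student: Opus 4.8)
The plan is to regard $\phi_+$ as a subsolution of a degenerate parabolic equation and to run an energy (Moser-type) iteration. Formally, fix $t_0\in(0,T)$ and an exponent $\beta\ge 0$, multiply inequality (i) by $\phi_+^\beta$, and integrate over $(t_0,T)\times\T^d$. Since $\phi_+^\beta(-\partial_t\phi)=-\tfrac{1}{\beta+1}\partial_t\big(\phi_+^{\beta+1}\big)$ in the sense of distributions, the time-derivative term telescopes and, using $\phi(T,\cdot)\le\phi_T$, one is led to
\[
\frac{1}{\beta+1}\int_{\T^d}\phi_+(t_0,x)^{\beta+1}\dd x+c_0\int_{t_0}^{T}\int_{\T^d}\phi_+^\beta|\nabla\phi|^r\dd x\dd t\le\frac{1}{\beta+1}\int_{\T^d}(\phi_T)_+^{\beta+1}\dd x+\int_{t_0}^{T}\int_{\T^d}\phi_+^\beta\,\alpha\dd x\dd t.
\]
Setting $\psi:=\phi_+^{(\beta+r)/r}$, the gradient term equals a constant depending on $\beta,r$ times $\|\nabla\psi\|_{L^r((t_0,T)\times\T^d)}^r$, so this one inequality controls $\phi_+$ in $L^\infty((t_0,T);L^{\beta+1}(\T^d))$ and $\psi$ in $L^r((t_0,T);W^{1,r}(\T^d))$, modulo the right-hand side.

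To make this rigorous one first observes that, since $c_0|\nabla\phi|^r\in L^1$ and $\alpha\in L^p$, the inequality forces $\partial_t\phi$ to be a signed measure whose negative part is in $L^1$; hence $\phi\in BV((0,T)\times\T^d)$ (in particular of bounded variation in time) and has one-sided time traces. Then I would mollify $\phi$ in time, test the regularized inequality against $(\phi^\e_+\wedge k)^\beta$ times a smooth temporal cutoff converging to $\mathbf 1_{(t_0,T)}$, and pass to the limits $\e\to 0$, $k\to\infty$, and then remove the cutoff using the trace at $t_0$. The limiting case $\beta\downarrow 0$ (testing against $\mathbf 1_{\{\phi>0\}}$) already yields $\phi_+\in L^\infty((0,T);L^1(\T^d))\cap L^r((0,T);W^{1,r}(\T^d))$, and hence, via the parabolic Gagliardo--Nirenberg--Sobolev embedding $L^\infty_t(L^a_x)\cap L^r_t(W^{1,r}_x)\hookrightarrow L^{r(d+a)/d}((0,T)\times\T^d)$ with $a=1$, the first gain $\phi_+\in L^{r(d+1)/d}((0,T)\times\T^d)$. (For definiteness one assumes $r<d$; the embeddings are only stronger otherwise.)

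Then I would iterate. Suppose one has a bound $\|\phi_+\|_{L^{\gamma_k}((0,T)\times\T^d)}\le A_k$. In the displayed inequality, estimate the right-hand side by H\"older, $\int_{t_0}^{T}\int_{\T^d}\phi_+^\beta\alpha\le\|\alpha\|_{L^p}\|\phi_+\|_{L^{\beta p'}}^\beta$, which is controlled by $A_k$ provided $\beta p'\le\gamma_k$; for such $\beta$ the inequality bounds $\phi_+$ in $L^\infty_t(L^{\beta+1}_x)$ and $\psi$ in $L^r_t(W^{1,r}_x)$, and the parabolic embedding upgrades this to $\phi_+\in L^{\gamma(\beta)}((0,T)\times\T^d)$ with $\gamma(\beta)=\beta\,\tfrac{d+r}{d}+r\,\tfrac{d+1}{d}$, the bounds being uniform in $t_0\in(0,T)$. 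Taking the largest admissible value $\beta=\gamma_k/p'$ gives the affine recursion $\gamma_{k+1}=\tfrac{d+r}{dp'}\gamma_k+\tfrac{r(d+1)}{d}$, whose multiplier $\tfrac{d+r}{dp'}$ is $<1$ precisely when $r(p-1)<d$. In that regime $\gamma_k\uparrow\gamma^\ast=\tfrac{rp(d+1)}{d-r(p-1)}$, and $\beta^\ast:=\gamma^\ast/p'$ satisfies $\beta^\ast+1=\eta=\tfrac{d(r(p-1)+1)}{d-r(p-1)}$; when instead $r(p-1)>d$ every $\beta$ is admissible and a standard Moser/De Giorgi iteration ($\gamma_k\to\infty$) gives $\phi_+\in L^\infty$. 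The $L^\infty((0,T);L^\eta(\T^d))$ bound is then the $L^\infty_t(L^{\beta+1}_x)$ part of the displayed inequality at level $\beta=\beta^\ast$, and the $L^\gamma((0,T)\times\T^d)$ bound follows from one last use of the embedding.

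The main obstacle is the endpoint of the iteration. One must show that the constants $A_k$ stay bounded as $k\to\infty$ --- this comes down to checking that a homogeneity exponent, which tends to $\tfrac{1+r/d}{p'}$, is $<1$, i.e.\ again to $r(p-1)<d$ --- so that a monotone-convergence/Fatou argument along $\gamma_k\uparrow\gamma^\ast$ (and along $\beta_k+1\uparrow\eta$ for the time-supremum norm) recovers the \emph{endpoint} exponents $\gamma^\ast$ and $\eta$, not merely every exponent strictly below. One must also keep every constant depending only on $p,d,r,c_0,T,\|\alpha\|_{L^p}$ and $\|\phi_T\|_{L^\eta}$; since the iteration only ever uses $\beta+1\le\eta$, the term $\int_{\T^d}(\phi_T)_+^{\beta+1}$ is always controlled by $\|\phi_T\|_{L^\eta}$ on the torus, so $\|\phi_T\|_{L^\infty}$ itself never enters. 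The low-regularity justification of the testing step (the $BV$-in-time structure and the traces) is the other delicate point, but it is routine once one knows $\partial_t\phi$ is bounded below by an $L^1$ function.
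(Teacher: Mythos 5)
The paper does not actually prove this statement: it is quoted, as the bracketed attribution indicates, from Theorem 3.3 of \cite{CarGraPorTon}, and is then only ever applied to the smooth minimizing sequence $\phi_n\in\s{K}_0$. So the comparison is with the cited proof rather than with an in-paper argument. Your strategy is the standard one behind that result: test the inequality with powers $\phi_+^\beta$, get $L^\infty_t L^{\beta+1}_x$ control of $\phi_+$ together with $L^r_t W^{1,r}_x$ control of $\phi_+^{(\beta+r)/r}$, and upgrade via the parabolic Gagliardo--Nirenberg embedding; your bookkeeping is correct (the recursion $\gamma_{k+1}=\tfrac{d+r}{dp'}\gamma_k+\tfrac{r(d+1)}{d}$ has multiplier $<1$ exactly when $r(p-1)<d$, its fixed point is $\gamma$, and $\gamma/p'+1=\eta$, which also explains why only $\|\phi_T\|_{L^\eta}$ enters). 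The main structural difference is that the infinite iteration with endpoint recovery is an unnecessary detour: since the right-hand side $\iint\phi_+^\beta\alpha\le\|\alpha\|_{L^p}\|\phi_+\|_{L^{\beta p'}}^\beta$ has homogeneity $\beta$ while the left-hand side controls homogeneities $\beta+1$ and $\beta+r$, one can test once, at the endpoint power $\beta=\eta-1$ applied to truncations $\phi_+\wedge k$, absorb by Young's inequality, and let $k\to\infty$; this one-shot absorption is essentially how the cited argument closes and it makes your Fatou/endpoint-limit discussion (which is nevertheless correct on the finite measure space $[0,T]\times\T^d$) superfluous.

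The step you dismiss as ``routine'' is in fact the delicate one, and as written it contains a false identity: $\phi_+^\beta(-\partial_t\phi)=-\tfrac{1}{\beta+1}\partial_t(\phi_+^{\beta+1})$ does not hold when $\partial_t\phi$ is merely a measure; one only gets an inequality, and its direction depends on which representative of $\phi_+^\beta$ is paired with the singular part of $\partial_t\phi$. What saves the argument is the sign information you half-note: (i) gives $\partial_t\phi+\alpha\ge c_0|\nabla\phi|^r\ge0$ as distributions, so the singular part of $\partial_t\phi$ is a nonnegative measure, and by convexity of $s\mapsto s_+^{\beta+1}$ an upward jump satisfies $\phi_+^\beta(t^-)\,(\phi(t^+)-\phi(t^-))\le\tfrac{1}{\beta+1}\bigl(\phi_+^{\beta+1}(t^+)-\phi_+^{\beta+1}(t^-)\bigr)$, i.e.\ the favorable direction, provided the left-limit representative is used; a one-sided time mollification together with the terminal trace $\phi(T,\cdot)\le\phi_T$ then yields the integrated inequality for a.e.\ $t_0$. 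This needs to be spelled out (or one restricts, as the paper effectively does, to smooth subsolutions, for which your formal computation is already rigorous). Two smaller gaps: the parabolic embedding on the torus also requires the lower-order term $\|\phi_+^{(\beta+r)/r}\|_{L^r}$ or the $\sup_t L^a$ norm in its right-hand side, which your sketch silently assumes is controlled (it is, by the $L^\infty_t L^{\beta+1}_x$ part of the energy inequality, but say so); and the case $p>1+\tfrac{d}{r}$, where the claim is an $L^\infty$ bound, is only waved at --- there the exponents $\gamma_k$ diverge and one must run a genuine Moser iteration tracking the product of constants, which is standard but is a separate computation, not a corollary of the subcritical recursion.
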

Using Theorem \ref{thm:a priori bounds for hj}, one can modify the proof of Theorem 2.9 in \cite{CarGra} to get existence of minimizers for the relaxed problem $\ds\min_{(\phi,\alpha) \in \s{K}} \s{A}(\phi,\alpha)$.
Indeed, let $\phi_n \in \s{K}_0$ be a minimizing sequence, so that
\begin{equation}
\int_0^T \int_{\bb{T}^d} F^*(x,\alpha_n)\dd x \dd t - \int_{\bb{T}^d} \phi_n(0)m_0 \dd x \to \inf_{\phi \in \s{K}_0} \s{A}(\phi)
\end{equation}
where $\alpha_n = -\partial_t\phi_n + H(x,\nabla\phi_n)$. For $\e\in\R$, 
set $K_\e = \sup_{x \in \bb{T}^d} F(x,\e)$ and note that $F^*(x,a) \geq a\e - K_\e$ for all $a\in\R$.
By the bounds \eqref{eq:cost_growth_star} it follows that
\begin{equation} \label{eq:minimizer-bounds1}
\int_0^T \int_{\bb{T}^d} F^*(x,\alpha_n)\dd x \dd t
\geq \frac{1}{pC}\|(\alpha_n)_+\|_{L^p}^p - \e \int_0^T \int_{\bb{T}^d} (\alpha_n)_-\dd x \dd t - K_\e T - C.
\end{equation}
Multiplying $(\alpha_n)_- = (\alpha_n)_+ + \partial_t \phi_n - H(x,\nabla \phi_n)$ by $m_0$ then integrating, using \eqref{eq:hamiltonian_bounds} and the fact that $m_0$ is bounded above and below by positive constants, we deduce
\begin{multline} \label{eq:minimizers-bounds2}
\int_0^T \int_{\bb{T}^d} (\alpha_n)_-\dd x \dd t
\leq C\int_0^T \int_{\bb{T}^d}(\alpha_n)_+ m_0 \dd x \dd t + \int_{\bb{T}^d} \phi_T m_0 - \phi_n(0)m_0 \dd x + CT
\\
 \leq C\|(\alpha_n)_+\|_{L^p} - \int_{\bb{T}^d} \phi_n(0)m_0 \dd x + C.
\end{multline}
Since
\begin{equation}
\int_0^T \int_{\bb{T}^d} F^*(x,\alpha_n)\dd x \dd t - \int_{\bb{T}^d} \phi_n(0)m_0 \dd x \leq C
\end{equation}
we can combine \eqref{eq:minimizer-bounds1} and \eqref{eq:minimizers-bounds2} to get
\begin{equation}
\frac{1}{pC}\|(\alpha_n)_+\|_{L^p}^p - \e C\|(\alpha_n)_+\|_{L^p} - (1 - \e) \int_{\bb{T}^d} \phi_n(0)m_0 \dd x 
\leq C + K_\e T.
\end{equation}
By fixing $\e > 0$ small enough and using \eqref{eq:minimizers-bounds2} once more, we obtain a priori upper bounds on both $\|(\alpha_n)_+\|_p$ and $-\int_{\bb{T}^d} \phi_n(0)m_0 \dd x$.
On the other hand, since
\begin{multline}
-\int_{\bb{T}^d} \phi_n(t)m_0 \dd x
\leq
-\int_{\bb{T}^d} \phi_n(0)m_0 \dd x
+ \int_0^t \int_{\bb{T}^d} (\alpha_n - H(x,\nabla \phi_n))m_0 \dd x
\\
\leq -\int_{\bb{T}^d} \phi_n(0)m_0 \dd x + C\|(\alpha_n)_+\|_p + C,
\end{multline}
we also have an upper bound on $-\int_{\bb{T}^d} \phi_n(t)m_0 \dd x$ that holds uniformly in time.
Combining this with Theorem \ref{thm:a priori bounds for hj}, we deduce that $\phi_n$ is bounded in $L^\infty([0,T];L^1(\bb{T}^d))$.
Moreover, \eqref{eq:minimizers-bounds2} implies that $(\alpha_n)_-$ is bounded in $L^1$.

The rest of the proof now follows that of Theorem 2.9 in \cite{CarGra} and Proposition 5.4 in \cite{CarGraPorTon}.
In particular, a modified version of $\alpha_n$ now has a weak limit $\alpha$ in $L^1([0,T]\times\T^d)$, and $\phi_n$ is seen to have a weak limit $\phi$ in $BV([0,T]\times\T^d)$, and one shows (cf.~\cite{CarGraPorTon}) that $(\phi,\alpha) \in \s{K}$ is a minimizer of $\s{A}$.

The above arguments imply in particular that both optimization problems, and hence the MFG system have a solution, without imposing the joint assumption on the growth condition of $f$ and $H$.

\subsection{The regularity by duality method}

The idea to obtain Sobolev estimates by duality on the $m$ variable in the system \eqref{eq:MFG_intro} used in \cite{ProSan} dates back to \cite{Bre99} (see also \cite{AmbFig2} and \cite{CarMesSan}) and it is as follows. Consider the two optimization problems involving the functionals $\cA$ and $\cB$ and suppose that these have minimizers $(\phi,\a)$ and $(m,w)$ respectively in the corresponding spaces as described above. The duality implies in particular that 
$$
\cA(\phi,\a)+\cB(m,w)=0.
$$
Now, consider $\eta\in\R$ small and $\d\in\T^d$ such that $|\d|$ is small and construct some special competitors by translations for the problem involving $\cB$, i.e. set
$$(m^{\eta,\d},w^{\eta,\d})(t,x):=(m(t+\eta,x+\d),w(t+\eta,x+\d)).$$
Let us remark that in order to preserve in particular the initial condition $m(0,\cdot)=m_0$, one needs to use some special cutoff functions in time, when one considers time translations, but let us skip this detail in this informal description.

Suppose that $F$ satisfies the strong coercivity condition \eqref{coercivity}. Then, the key part of the analysis is to show that 
\begin{equation}\label{ineq:error}
\cB(m^{\eta,\d},w^{\eta,\d})\le \cB(m,w)+ C(\eta^2+|\d|^2)
\end{equation}
for some constant $C>0$ independent of $\eta$ and $\d$. Then, one can show some $H^1_{{\rm{loc}}}$ estimates for $m$. Indeed, first we will have
$$\cA(\tilde\phi,\tilde\a)+\cB(\tilde m,\tilde w)\ge \|J(\tilde m)-J_*(\tilde\a)\|^2_{L^2}$$
for any competitors $(\tilde\phi,\tilde\a)$ and $(\tilde m,\tilde w)$ respectively. This means in particular by the duality that for the optimizers $(\phi,\a)$ and $(m,w)$ one has $J(m)=J_*(\a).$ Second, by these last two observations if we add the quantity $\cA(\phi,\a)$ to both sides of \eqref{ineq:error}, one gets
$$\|J(m)-J(m^{\eta,\d})\|^2_{L^2}\le C(\eta^2+|\d|^2),$$
thus the Sobolev estimate follows for $J(m)$.

\section{A warm-up: Sobolev estimates for the stationary problem}\label{sec:stationary}
Our goal, in fact, is to show that the weak solutions $(\phi,m,\l)$ given by Theorem \ref{theo:mainexergo} are such that for $\phi$ one can derive a second order Sobolev estimate, while in the same time one can obtain first order Sobolev estimates for $m$. These results are in the spirit of those in \cite{ProSan}.

We will make some extra assumptions:
\begin{equation}\label{eq:H5}
\begin{array}{ll}
F^*(\cdot, \a)\in C^{1,1}(\T^d),\ \forall\ \a\in \R, & D_xF^*(x,\cdot)\in{\rm{Lip_{loc}}}(\R),\\ [5pt]
D_\a F^*\in{\rm{Lip_{loc}}}(\T^d\times\R)\ \ {\rm{and}} &  H(\cdot,\xi) \in C^{1,1}(\T^d), \ \forall\ \xi\in\R^d.
\end{array}\tag{H5}
\end{equation}
For $h \in \bb{T}^d$, we define the translates
$$
\phi_h(x) := \phi(x-h).
$$
Choose $(\lambda,\phi) \in \overline{\s{K}_0}$ such that $\s{A}(\lambda,\phi)$ is finite.
Observe that
$$
\s{A}(\lambda,\phi_h) = \int_{\bb{T}^d} F^*(x,\lambda + H(x,\nabla \phi(x-h)))\dd x - \lambda = \int_{\bb{T}^d} F^*(x+h,\lambda + H(x+h,\nabla \phi(x)))\dd x - \lambda
$$
is also finite.
\begin{lemma}
	Suppose $(\lambda,\phi) \in \overline{\s{K}_0}$ is such that $\s{A}(\lambda,\phi)$ is finite.
	Let $A:\bb{T}^d \to \bb{R}$ be given by $A(h) := \s{A}(\lambda,\phi_h)$.
	Then $A$ is $C^{1,1}(\T^d)$.
\end{lemma}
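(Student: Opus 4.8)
The plan is to show that $A(h) := \s{A}(\lambda,\phi_h) = \int_{\bb{T}^d} F^*(x+h,\lambda + H(x+h,\nabla\phi(x)))\dd x - \lambda$ is differentiable with a Lipschitz gradient by differentiating under the integral sign. First I would fix $(\lambda,\phi)$ with $\s{A}(\lambda,\phi)$ finite and rewrite $A$ in the ``moving data'' form displayed just before the lemma, so that the $h$-dependence sits entirely inside the smooth data $F^*$ and $H$ rather than in the (merely $W^{1,pr}$) function $\phi$. Set $g(x,h) := F^*\big(x+h,\lambda + H(x+h,\nabla\phi(x))\big)$. Under hypothesis \eqref{eq:H5}, for a.e.\ fixed $x$ the map $h \mapsto g(x,h)$ is $C^{1,1}$: indeed $H(\cdot,\xi)\in C^{1,1}(\T^d)$ gives that $h\mapsto H(x+h,\nabla\phi(x))$ is $C^{1,1}$, and $F^*\in C^{1,1}$ in the space variable together with $D_\alpha F^*$ locally Lipschitz (and $D_x F^*(x,\cdot)$ locally Lipschitz) lets us compose. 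Explicitly,
\begin{equation*}
D_h g(x,h) = D_x F^*\big(x+h,\lambda+H(x+h,\nabla\phi(x))\big) + D_\alpha F^*\big(x+h,\lambda+H(x+h,\nabla\phi(x))\big)\, D_x H(x+h,\nabla\phi(x)).
\end{equation*}

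Next I would justify interchanging $D_h$ and $\int_{\bb{T}^d}$. The obstacle here — and I think it is the main technical point — is that the integrand involves $\nabla\phi(x)$, which lies only in $L^{pr}(\T^d)$, so the bounds coming from \eqref{eq:H5} are ``locally Lipschitz'' in the $\xi$-slot and one must control them by a power of $|\nabla\phi(x)|$ that is integrable. The growth bounds \eqref{eq:hamiltonian_bounds} on $H$ give $|H(x+h,\nabla\phi(x))| \le \frac{C}{r}|\nabla\phi(x)|^r + C$, so $\lambda + H(x+h,\nabla\phi(x))$ ranges over a set whose size is controlled by $|\nabla\phi(x)|^r \in L^p$; combined with the local Lipschitz bounds on $D_x F^*$, $D_\alpha F^*$ and with $|D_x H(x+h,\nabla\phi(x))|$ controlled (using $H(\cdot,\xi)\in C^{1,1}$ and the growth in $\xi$, e.g.\ by $C(1+|\nabla\phi(x)|^r)$ or a comparable power), one gets $|D_h g(x,h)| \le \Psi(x)$ for an $h$-independent $\Psi \in L^1(\T^d)$, and similarly a uniform local-Lipschitz-in-$h$ bound $|D_h g(x,h_1) - D_h g(x,h_2)| \le \Psi(x)|h_1-h_2|$ with $\Psi\in L^1$. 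One should also check $g(x,\cdot)$ itself is integrable for each $h$, which is exactly the finiteness of $\s{A}(\lambda,\phi_h)$ already observed in the excerpt.

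With these dominated-convergence ingredients in place, the standard theorem on differentiating parameter integrals yields $A \in C^1(\T^d)$ with $DA(h) = \int_{\bb{T}^d} D_h g(x,h)\dd x$, and the uniform $L^1$-Lipschitz bound on $h\mapsto D_h g(x,\cdot)$ gives $|DA(h_1) - DA(h_2)| \le \|\Psi\|_{L^1}\,|h_1-h_2|$, i.e.\ $DA$ is Lipschitz, so $A\in C^{1,1}(\T^d)$. I would present the argument in this order: (1) change of variables to move $h$ into the data; (2) pointwise $C^{1,1}$ regularity of $g(x,\cdot)$ from \eqref{eq:H5}; (3) $h$-uniform $L^1$ domination of $g$, $D_h g$, and the difference quotients of $D_h g$, using the growth bounds \eqref{eq:hamiltonian_bounds}, \eqref{eq:cost_growth_star} and the integrability $|\nabla\phi|^r\in L^p$ (equivalently $\nabla\phi\in L^{pr}$); (4) conclude by differentiation under the integral. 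The one place to be slightly careful is that the ``local'' Lipschitz constants in \eqref{eq:H5} are applied on a set of arguments whose size grows with $|\nabla\phi(x)|$, so the resulting majorant $\Psi$ is a genuine power-growth function of $|\nabla\phi(x)|$ and its integrability must be read off from $\nabla\phi\in L^{pr}(\T^d)$; everything else is routine.
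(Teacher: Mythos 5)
Your proof follows essentially the same route as the paper's: after the change of variables that moves $h$ into the data $F^*$ and $H$, you differentiate under the integral sign and arrive at exactly the paper's formula for $DA(h)$, concluding the Lipschitz continuity of $DA$ from \eqref{eq:H5}. The only difference is that you spell out the dominated-convergence details (an $h$-independent majorant with power growth in $|\nabla\phi|$, integrable since $\nabla\phi\in L^{pr}(\T^d)$) which the paper leaves implicit, so the proposal is correct and, if anything, more careful on that point.
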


\begin{proof}
	Observe that
	\begin{align*}
	A(y+h)-A(y) &= \int_{\bb{T}^d} [F^*(x+h,\lambda + H(x+h,\nabla \phi_y(x))) - F^*(x,\lambda + H(x,\nabla \phi_y(x)))]\dd x
	\\
	&= \int_{\bb{T}^d} D_xF^*(x,\lambda + H(x,\nabla \phi_y(x)))\cdot h\dd x\\ 
	&+ \int_{\T^d} D_\a F^*(x,\lambda + H(x,\nabla \phi_y(x))) D_x H(x,\nabla \phi_y(x))\cdot h \dd x + o(|h|).
	\end{align*}
	It follows that $A \in C^1(\T^d)$ with
	\begin{multline*}
	DA(h) = \int_{\bb{T}^d} [D_xF^*(x,\lambda + H(x,\nabla \phi_h(x))) + D_\a F^*(x,\lambda + H(x,\nabla \phi_h(x))) D_x H(x,\nabla \phi_h(x))] \dd x
	\\
	= \int_{\bb{T}^d} [D_xF^*(x+h,\lambda + H(x+h,\nabla \phi(x))) + D_\a F^*(x+h,\lambda + H(x+h,\nabla \phi(x))) D_x H(x+h,\nabla \phi(x))] \dd x.
	\end{multline*}
	By the assumption \eqref{eq:H5} we see that $DA(h)$ is Lipschitz continuous, as desired.
\end{proof}

\begin{lemma} \label{lem:h^2}
	Suppose $(\lambda,\phi) \in \overline{\s{K}_0}$ is such that $\s{A}(\lambda,\phi)$ is minimized.
	Then $A(h) = \s{A}(\lambda,\phi_h) = \s{A}(\lambda,\phi) + O(|h|^2)$.
\end{lemma}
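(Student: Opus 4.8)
The plan is to use the $C^{1,1}(\T^d)$ regularity of $A$ established in the previous lemma. Since $DA$ is then Lipschitz, a first-order Taylor expansion with Lipschitz remainder gives $A(h)=A(0)+DA(0)\cdot h+O(|h|^2)$, so everything reduces to showing $DA(0)=0$; and for this I would argue that $h=0$ is a global minimizer of $A$ on $\T^d$.

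To see the latter, note that for every $h\in\T^d$ the translate $\phi_h(x)=\phi(x-h)$ still belongs to $W^{1,pr}(\T^d)$, because translation is an isometry of the Sobolev spaces on the torus; hence $(\l,\phi_h)\in\overline{\s{K}_0}$ is an admissible competitor. Using that $(\l,\phi)$ minimizes $\s{A}$ over $\overline{\s{K}_0}$ (Lemma \ref{Lem:dualiteergo}) I get
\[
A(h)=\s{A}(\l,\phi_h)\ \ge\ \min_{\overline{\s{K}_0}}\s{A}\ =\ \s{A}(\l,\phi)\ =\ A(0)\qquad\text{for all }h\in\T^d .
\]
This is the stationary counterpart of the principle recalled in Section \ref{sec:prelim} that translates of optimizers solve translated problems; here it is enough simply to plug $\phi_h$ into the untranslated functional. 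Since $A\in C^1(\T^d)$ attains its minimum at the interior point $h=0$, it follows that $DA(0)=0$.

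To conclude, let $L$ be a Lipschitz constant for $DA$ (provided by the previous lemma). Integrating along the segment $s\mapsto sh$ and using $DA(0)=0$,
\[
A(h)-A(0)=\int_0^1 DA(sh)\cdot h\,\dd s=\int_0^1\bigl(DA(sh)-DA(0)\bigr)\cdot h\,\dd s ,
\]
so $|A(h)-A(0)|\le \tfrac{L}{2}|h|^2$, which is precisely $\s{A}(\l,\phi_h)=\s{A}(\l,\phi)+O(|h|^2)$.

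I do not expect any real obstacle here. The only point deserving attention is the observation in the second paragraph — that translating $\phi$ yields an admissible competitor for the untranslated problem, so that the minimality of $(\l,\phi)$ forces $h=0$ to be a minimum, hence a critical point, of $A$. Once $DA(0)=0$ is known, the quadratic bound is an automatic consequence of the $C^{1,1}$ regularity proved previously.
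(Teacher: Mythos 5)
Your proof is correct and follows essentially the same route as the paper: the $C^{1,1}$ regularity of $A$ from the preceding lemma gives the quadratic remainder, and $DA(0)=0$ because $h=0$ minimizes $A$ (translates $(\lambda,\phi_h)$ remaining admissible in $\overline{\s{K}}_0$, so minimality of $\s{A}(\lambda,\phi)$ forces $A(h)\ge A(0)$). You merely spell out the details that the paper leaves implicit, which is fine.
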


\begin{proof}
	By the previous Lemma, we can expand $A(h) = A(0) + DA(0)h + O(|h|^2)$, and since $h = 0$ is a minimizer for $A$ we have $DA(0) = 0$.
\end{proof}

Now we are going to assume the following coercivity estimate on $F$: there exist $J,J_*:[0,+\infty)\to\R$ and $c_0>0$ such that 
\begin{equation}
\label{coercivity}
F(x,m) + F^*(x,a) - ma \geq c_0|J(m) - J_*(a)|^2.\tag{H6}
\end{equation}
Here the functions $J,J_*$ can be derived explicitly for many common examples of $F$ (see the examples in \cite[Section 3]{ProSan}).

We deduce that
\begin{align*}
\s{A}(\lambda,\phi) + \s{B}(m,w)
&= \int_{\bb{T}^d}  \left[F^*\left(x,\lambda+H(x,\nabla \phi(x)) \right) + m(x) H^*\left(x, -w(x)/m(x)\right)+ F(x,m(x))\right]\dd x - \lambda
\\
&\geq \int_{\bb{T}^d}  c_0|J(m)-J_*(\lambda + H(x,\nabla \phi(x)))|^2\dd x\\ 
&+ \int_{\T^d}m(x)\left[H(x,\nabla \phi(x)) + H^*\left(x, -w(x)/m(x)\right)\right]\dd x
\\
&\geq \int_{\bb{T}^d}  \left[c_0|J(m)-J_*(\lambda + H(x,\nabla \phi(x)))|^2 - \nabla \phi(x) \cdot w(x)\right]\dd x
\\
&= c_0\int_{\bb{T}^d} |J(m)-J_*(\lambda + H(x,\nabla \phi(x)))|^2\dd x
\end{align*}
for any $(\lambda,\phi) \in \overline{ \s{K}}_0$ and $(m,w) \in \overline{ \s{K}}_1$ such that $w \in L^{(q'r)'}(\bb{T}^d) = L^{r'q/(r'+q-1)}(\bb{T}^d)$
(note that $\nabla\cdot w = 0$ in the sense of distributions, hence $\int_{\bb{T}^d} \nabla \phi(x) \cdot w(x)\dd x = 0$).

In particular, suppose that $(\lambda,\phi) \in \overline{ \s{K}}_0$ and $(m,w) \in \overline{ \s{K}}_1$ are optimal, that is, $\s{A}(\lambda,\phi)$ and $\s{B}(m,w)$ are minimized.
Consider the translates $\phi_h(x) := \phi(x-h)$.
Then we have the estimates
\begin{align*}
\|J_*(H(\cdot,\nabla \phi_h) + \lambda)& - J_*(H(\cdot,\nabla \phi) + \lambda) \|_{L^2(\bb{T}^d)}^2 
\\
&\leq 2\|J_*(H(\cdot,\nabla \phi_h) + \lambda) - J(m) \|_{L^2(\bb{T}^d)}^2 + 2 \|J(m) - J_*(H(\cdot,\nabla \phi) + \lambda) \|_{L^2(\bb{T}^d)}^2 \\
&\leq \frac{2}{c_0}\left(\s{A}(\lambda,\phi_h) + \s{B}(m,w) \right) + \frac{2}{c_0} \left(\s{A}(\lambda,\phi) + \s{B}(m,w) \right) \\
&= \frac{2}{c_0}\left(\s{A}(\lambda,\phi_h) - \s{A}(\lambda,\phi) \right) = O(|h|^2),
\end{align*}
where in the last line we have used the duality result of Lemma \ref{Lem:dualiteergo} and the estimate from Lemma \ref{lem:h^2}.

This proves
\begin{theorem}\label{thm:stat_main}
	$J_*(H(\cdot,\nabla \phi) + \lambda)$ is in $H^1(\bb{T}^d)$.
\end{theorem}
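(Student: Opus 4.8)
The plan is to assemble the pieces already in place: the coercivity hypothesis \eqref{coercivity}, the duality identity of Lemma \ref{Lem:dualiteergo}, and the quadratic expansion of Lemma \ref{lem:h^2}, and then invoke the difference-quotient characterization of $H^1(\T^d)$.

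First I would establish the basic lower bound: for every $(\l,\phi)\in\overline{\s{K}}_0$ with $\cA(\l,\phi)$ finite and every $(m,w)\in\overline{\s{K}}_1$ with $w\in L^{r'q/(r'+q-1)}(\T^d)$,
$$\cA(\l,\phi)+\cB(m,w)\ \ge\ c_0\int_{\T^d}\bigl|J(m)-J_*\bigl(\l+H(x,\nabla\phi(x))\bigr)\bigr|^2\dd x.$$
This comes from applying \eqref{coercivity} with $a=\l+H(x,\nabla\phi(x))$ to bound $F^*(x,a)+F(x,m)$ below, then using the Fenchel--Young inequality $mH^*(x,-w/m)+mH(x,\nabla\phi)\ge -w\cdot\nabla\phi$ (valid with the convention \eqref{conventionH*}), and finally noting that $\int_{\T^d}\nabla\phi\cdot w\dd x=0$ since $\nabla\cdot w=0$ in the sense of distributions and the exponents of $\nabla\phi$ and $w$ are conjugate.

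Next I take $(\l,\phi)$ minimizing $\cA$ over $\overline{\s{K}}_0$ and $(m,w)$ minimizing $\cB$ over $\overline{\s{K}}_1$. By Lemma \ref{Lem:dualiteergo}, $\cA(\l,\phi)+\cB(m,w)=0$ and $(m,w)$ has the required integrability, so the displayed inequality forces $J(m)=J_*(\l+H(\cdot,\nabla\phi))$ a.e.\ in $\T^d$. Now fix $h\in\T^d$ and apply the same inequality to the admissible competitor $(\l,\phi_h)$ — still in $\overline{\s{K}}_0$ with $\cA(\l,\phi_h)$ finite, as observed above, and with $\int_{\T^d}\nabla\phi_h\cdot w\dd x=0$ again by $\nabla\cdot w=0$ — paired with the unchanged $(m,w)$. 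The triangle inequality in $L^2(\T^d)$ together with the vanishing of $\|J(m)-J_*(H(\cdot,\nabla\phi)+\l)\|_{L^2}$ then gives
$$\|J_*(H(\cdot,\nabla\phi_h)+\l)-J_*(H(\cdot,\nabla\phi)+\l)\|_{L^2(\T^d)}^2\ \le\ \tfrac{2}{c_0}\bigl(\cA(\l,\phi_h)+\cB(m,w)\bigr)+\tfrac{2}{c_0}\bigl(\cA(\l,\phi)+\cB(m,w)\bigr).$$
Using duality once more to write $\cB(m,w)=-\cA(\l,\phi)$, the right-hand side collapses to $\tfrac{2}{c_0}\bigl(\cA(\l,\phi_h)-\cA(\l,\phi)\bigr)$, which is $O(|h|^2)$ with a constant uniform in $h$ by Lemma \ref{lem:h^2}. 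Since a function $g\in L^2(\T^d)$ belongs to $H^1(\T^d)$ exactly when $\sup_{h\ne0}|h|^{-1}\|g(\cdot-h)-g\|_{L^2}<\infty$, this yields $J_*(H(\cdot,\nabla\phi)+\l)\in H^1(\T^d)$.

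I expect the only real subtlety to be the bookkeeping that legitimizes the first inequality: one must be sure that the cross terms $\int_{\T^d}\nabla\phi\cdot w\dd x$ and $\int_{\T^d}\nabla\phi_h\cdot w\dd x$ genuinely vanish, which uses the matching Lebesgue exponents for $\phi$ and $w$ provided by Lemma \ref{Lem:dualiteergo} (and, if desired, approximation of $w$ by smooth divergence-free fields), and that the expansion of Lemma \ref{lem:h^2} indeed applies to the translated competitor $\phi_h$. The remaining ingredients — the Fenchel--Young inequality, the triangle inequality, and the difference-quotient criterion for $H^1$ — are routine.
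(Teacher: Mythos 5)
Your proposal is correct and follows essentially the same route as the paper: the coercivity bound \eqref{coercivity} combined with Fenchel--Young and the divergence-free structure to control $\|J(m)-J_*(\l+H(\cdot,\nabla\phi_h))\|_{L^2}^2$ by $\cA(\l,\phi_h)+\cB(m,w)$, then the duality identity and the $O(|h|^2)$ expansion of Lemma \ref{lem:h^2} to conclude via difference quotients. The only cosmetic difference is that you explicitly invoke $J(m)=J_*(\l+H(\cdot,\nabla\phi))$ a.e.\ to kill one term, whereas the paper bounds both terms by the same duality gap (one of which vanishes); the substance is identical.
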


Let us remark that the very same approach applies (even with simpler computations) to obtain Sobolev estimates for the $m$ variable. Indeed, let us assume that the following regularity conditions hold true.

\begin{equation}
F(\cdot, m)\in C^{1,1}(\T^d),\ \forall\ m\in[0,+\infty),\ \ {\rm{and}}\ \ H^*(\cdot,\zeta) \in C^{1,1}(\T^d), \ \forall\ \zeta\in\R^d.
\tag{H5'}
\end{equation}

Then, considering any competitor $(m,w)$ such that $\cB(m,w)$ is finite, for $h\in\T^d$ defining the translates $(m_h,w_h)$, it is easy to show that the map $\T^d\ni h\mapsto \cB(m_h,w_h)$ is $C^{1,1}$. A similar reasoning as for the proof of Theorem \ref{thm:stat_main} yields the estimate for $J(m)$. More precisely, one has 

\begin{theorem}\label{thm:stat_main2}
$J(m)$ is in $H^1(\T^d)$.
\end{theorem}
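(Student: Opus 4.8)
The plan is to mirror the argument used for Theorem \ref{thm:stat_main}, but with the roles of the two dual problems interchanged, exploiting the fact that the $\cB$-functional is the ``primal'' one and its translates are handled by the regularity hypothesis (H5'). First I would fix a competitor $(m,w)\in\overline{\s{K}}_1$ with $\cB(m,w)$ finite and define the translates $(m_h,w_h)(x):=(m(x-h),w(x-h))$ for $h\in\T^d$. One checks these are admissible: $m_h\ge0$ a.e., $\int_{\T^d}m_h\dd x=1$, and $\nabla\cdot w_h=0$ in the sense of distributions (translation commutes with the divergence operator on the torus), so $(m_h,w_h)\in\overline{\s{K}}_1$. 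Then, after the change of variables $x\mapsto x+h$,
\[
\cB(m_h,w_h)=\int_{\T^d}\Big[m(x)H^*\big(x+h,-w(x)/m(x)\big)+F(x+h,m(x))\Big]\dd x.
\]
Under (H5'), the integrand is $C^{1,1}$ in $h$ uniformly (the $C^{1,1}$ regularity of $F(\cdot,m)$ and $H^*(\cdot,\zeta)$ in $x$, together with the growth bounds \eqref{eq:cost_growth} and \eqref{eq:hamiltonian_conjugate_bounds} and the integrability $m\in L^q$, $w\in L^{r'q/(r'+q-1)}$ from Lemma \ref{Lem:dualiteergo}, let us differentiate under the integral sign and bound the second difference). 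Hence $B(h):=\cB(m_h,w_h)$ is $C^{1,1}(\T^d)$; this is the analogue of the first Lemma in Section \ref{sec:stationary}.

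Next, if in addition $(m,w)$ is optimal for \eqref{Pb:mw2ergo}, then $h=0$ minimizes $B$, so $DB(0)=0$ and the Taylor expansion gives
\[
\cB(m_h,w_h)=\cB(m,w)+O(|h|^2),
\]
the analogue of Lemma \ref{lem:h^2}. Now I would run the same coercivity computation as in the proof of Theorem \ref{thm:stat_main}, but keeping $(\lambda,\phi)$ fixed at the optimum and translating $(m,w)$ instead. Using (H6) and the fact that $w_h$ is divergence free (so $\int_{\T^d}\nabla\phi\cdot w_h\dd x=0$), one gets for the optimal $(\lambda,\phi)$:
\[
\cA(\lambda,\phi)+\cB(m_h,w_h)\ge c_0\int_{\T^d}\big|J(m_h)-J_*(\lambda+H(x,\nabla\phi(x)))\big|^2\dd x .
\]
Combining this with the optimality identity $\cA(\lambda,\phi)+\cB(m,w)=0$ (Lemma \ref{Lem:dualiteergo}), which in particular forces $J(m)=J_*(\lambda+H(\cdot,\nabla\phi))$ a.e., and with the $O(|h|^2)$ estimate, the triangle inequality yields
\[
\|J(m_h)-J(m)\|_{L^2(\T^d)}^2\le \frac{2}{c_0}\big(\cB(m_h,w_h)-\cB(m,w)\big)=O(|h|^2).
\]
Since $J(m_h)(x)=J(m)(x-h)$, this is precisely a uniform $L^2$ bound on the difference quotients of $J(m)$, which characterizes membership in $H^1(\T^d)$.

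The main obstacle — as in the warm-up case — is justifying the $C^{1,1}$ dependence of $B(h)$ on $h$, i.e. differentiating under the integral sign and controlling the remainder. The delicate point is that $H^*$ appears composed with the (possibly unbounded, possibly vanishing) ratio $-w/m$, so one must use the convention \eqref{conventionH*} and the growth bound \eqref{eq:hamiltonian_conjugate_bounds} together with the precise integrability $w\in L^{r'q/(r'+q-1)}$, $m\in L^q$ to see that $D_x H^*(x,-w/m)$ and its $x$-Lipschitz modulus are integrable against $m$. Compared with the $\cA$-side, this is genuinely ``simpler'' (as the statement's preamble says), because here there is no chain-rule factor $D_xH(x,\nabla\phi)$ coming from an inner composition — $F$ and $H^*$ are differentiated directly in their first slot — so the only real work is bookkeeping the growth exponents; I would therefore present it briefly and refer back to the estimates already displayed for Theorem \ref{thm:stat_main}.
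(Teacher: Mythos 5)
Your proposal is correct and follows essentially the same route as the paper, which proves Theorem \ref{thm:stat_main2} precisely by translating the optimal $(m,w)$, showing $h\mapsto\cB(m_h,w_h)$ is $C^{1,1}$ under (H5'), and repeating the coercivity-plus-duality computation from Theorem \ref{thm:stat_main} with the roles of the two problems interchanged. In fact you supply more detail (admissibility of the translates, the integrability bookkeeping for $D_xH^*(x,-w/m)$ via $m\in L^q$, $w\in L^{r'q/(r'+q-1)}$) than the paper's brief remark, and your observations are consistent with it.
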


Observe also that formally Theorem \ref{thm:stat_main} implies Theorem \ref{thm:stat_main2}. In fact, since $\l+H(x,\nabla\phi)=f(x,m)$, Theorem \ref{thm:stat_main} implies that $J_*(f(\cdot,m))$ is in $H^1(\T^d)$. Since the coupling $f$ satisfying the growth condition \eqref{eq:coupling_growth}, $J$ grows like $|\cdot|^{q/2}$ and $J_*$ has a growth like $|\cdot|^{q'/2}$, from where $J_*(f(\cdot,m))$ has exactly the same growth as $J(m)$. However, this reasoning stays formal, one of the main obstructions being that $\l+H(x,\nabla\phi)=f(x,m)$ is an equality a.e.~only on $\{m>0\}$. Thus, in order to obtain the Sobolev estimate for $m$, one needs to use the regularity by duality machinery, as described above.

\section{Sobolev estimates for the time-dependent case}\label{sec:time}

Our goal in this section is to prove global in time Sobolev estimates for the time dependent system \eqref{eq:MFG_intro}.
We will see that the technique is quite different from the duality method used in the previous section.
The main reason for this is that we need to perturb the data, and not just the solution.
In this context, let us assume some extra monotonicity/coercivity conditions for $f$ and $H$. 

\vspace{10pt}

{\bf Additional assumptions}

\vspace{10pt}

\begin{enumerate}
	\item[(H7)] (Conditions on the coupling) Let $f$ be continuous on $\bb{T}^d \times (0,\infty)$, strictly increasing in the second variable, satisfying \eqref{eq:coupling_growth}.
	Moreover, we will assume that $f(x,m)$ is Lipschitz with respect to $x$, specifically
	\begin{equation}
	\label{f Lipschitz in x}
	|f(x,m) - f(y,m)| \leq Cm^{q-1}|x-y| \ \forall x,y \in \bb{T}^d, \ m \geq 0;
	\end{equation}
	and also that $f(x,m)$ is strongly monotone in $m$, i.e.~there exists $c_0 > 0$ such that
	\begin{equation} \label{f strongly monotone}
	\left( f(x,\tilde m) - f(x,m)\right)(\tilde m - m) \geq c_0\min\{\tilde m^{q-2},m^{q-2}\}|\tilde m-m|^2 \ \forall \tilde m, m \geq 0, \ \tilde m \neq m.
	\end{equation}
	(Note: if $q < 2$ one should interpret $0^{q-2}$ as $+\infty$ in \eqref{f strongly monotone}.
	In this way, when $\tilde m = 0$, for instance, \eqref{f strongly monotone} reduces to $f(x,m)m \geq c_0m^q$, as in the more regular case $q \geq 2$.)
	\item[(H8)] (Coercivity assumptions.) We assume that there exist $j_1,j_2:\bb{R}^d \to \bb{R}^d$ and $c_0>0$ such that
		\begin{equation} \label{eq:Hcoercivity}
		H(x,\xi) + H^*(x,\zeta) - \xi \cdot \zeta \geq c_0|j_1(\xi) - j_2(\zeta)|^2, \ \ \forall\ \xi,\zeta\in\R^d.
		\end{equation}
	Notice that with this assumption we require that a sharper version of Young's inequality (i.e. $H(x,\xi) + H^*(x,\zeta) - \xi \cdot \zeta \geq 0$) takes place. In particular, in light of \eqref{eq:hamiltonian_bounds},  one can check that if $H(x,\xi)=\frac{1}{r}|\xi|^r$ ($r>1$), then \eqref{eq:Hcoercivity} holds true with the choices of $j_1(\xi) := |\xi|^{r/2-1}\xi$, $j_2(\zeta) := |\zeta|^{r'/2-1}\zeta$ and $c_0$ can be computed explicitly depending only on $r$.
\end{enumerate}

\begin{remark}
	The following ``canonical" examples satisfy all of the hypotheses on the data:
	$$
	f(x,m) = c_1(x)m^{q-1}, \ \ H(x,\xi) = c_2(x)|\xi|^r,
	$$
	where $c_1,c_2$ are continuous functions bounded below by a positive constant, and $c_1$ is Lipschitz.
	In order to satisfy the hypotheses of Proposition \ref{prop:space-regularity}, we would also require $c_2$ to have $W^{2,\infty}$ regularity.
\end{remark}

\begin{remark}
	Hypothesis (H7) is akin to (H6), in that the monotonicity estimates on $f(x,\cdot)$ are directly related to the convexity of $F(x,\cdot)$.
	In particular, if $F(x,m) = \frac{1}{q}c_1(x)m^q$ with $c_1$ as described in the previous remark, then (H7) is satisfied (because $f(x,m) = c_1(x)m^{q-1}$) while (H6) is satisfied with $J(m) = m^{q/2}$.
	Then the result of Proposition \ref{prop:space-regularity} below can be interpreted as giving an $H^1$ estimate on $J(m)$.
\end{remark}

\subsection{Regularity in space}

Let $\delta \in \bb{T}^d$ be given.
Here we will consider perturbations of the data which are translations in the space variable.
To wit, we use the superscript $\delta$ to denote translation by $\delta$, i.e.~$\alpha^\delta(t,x) = \alpha(t,x+\delta),F^\delta(x,m) = F(x+\delta,m),$ and so on for all other data.
Accordingly, we will write
$$
\s{A}^\delta = \s{A}_{F^\delta, m_0^\delta}, \ \ \s{B}^\delta = \s{B}_{ H^\delta, F^\delta, \phi_T^\d}, \ \ \s{K}^\delta = \s{K}( \phi_T^\delta, H^\delta), \ \ \s{K}_1^\delta = \s{K}_1( m_0^\delta).
$$
Let $(\phi,\alpha) \in \s{K}$ be a minimizer for $\s{A}$ and $(m,w) \in \s{K}_1$ be a minimizer for $\s{B}$.
To construct minimizers for $\s{A}^\delta$ and $\s{B}^\delta$, we can use translations in space of $(\phi,\alpha)$ and $(m,w)$.
Indeed, we have that $(\phi^\delta,\alpha^\delta) \in \s{K}^\delta$ and that $\s{A}^\delta(\phi^\delta,\alpha^\delta) = \s{A}(\phi,\alpha)$.
One can then deduce that $(\phi^\delta,\alpha^\delta)$ is a minimizer for $\s{A}^\delta$, since any competitor can be translated backwards to get a competitor for $(\phi,\alpha)$, which is a minimizer for $\s{A}$.
In the same way $(m^\delta,w^\delta)(t,x) = (m,w)(t,x+\delta)$ is a minimizer for $\s{B}^\delta$.

We will use these facts in the following proof.

\begin{proposition} \label{prop:space-regularity}
	Let $m_0,\phi_T \in W^{2,\infty}(\bb{T}^d)$, and assume that $H$ is twice continuously differentiable in $x$ with
	\begin{equation} \label{eq:D_x^2 H}
	|D_x^2 H(x,\xi)| \leq C|\xi|^{r} + C.\tag{H9}
	\end{equation}

	Then $\|m^{\frac{q}{2} - 1}\nabla m\|_{L^2([0,T]\times\T^d)} \leq C$ and $\|m^{1/2}D (j_1(\nabla \phi))\|_{L^2([0,T]\times\T^d)} \leq C$.
\end{proposition}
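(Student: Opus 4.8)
The plan is to build on the observation recorded just before the statement: the spatial translates $(\phi^\delta,\alpha^\delta)$ and $(m^\delta,w^\delta)$ of the optimizers are themselves the minimizers of the translated problems $\s{A}^\delta$ and $\s{B}^\delta$, whose common optimal value does not depend on $\delta$. First I would isolate the quantitative form of the duality: starting from arbitrary $(\phi,\alpha)\in\s{K}$ and $(m,w)\in\s{K}_1$, testing the Hamilton--Jacobi inequality against $m\ge 0$, integrating by parts in time through the continuity equation, and using the strong convexity of $F(x,\cdot)$ (equivalent to (H7)) together with (H8), one gets
\begin{equation*}
\s{A}(\phi,\alpha)+\s{B}(m,w)\ \ge\ \int_0^T\!\!\int_{\T^d}\Big(B_F(x;m,\alpha)+c_0\, m\,\big|j_1(\nabla\phi)-j_2(-w/m)\big|^2\Big)\dd x\dd t,
\end{equation*}
where $B_F(x;m,\alpha):=F^*(x,\alpha)+F(x,m)-\alpha m\ge 0$, and when $\alpha=f(x,\bar m)$ the bound $B_F(x;m,\alpha)\gtrsim|m^{q/2}-\bar m^{q/2}|^2$ holds. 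The same inequality is available in every translated world.

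Next I choose the competitors: keep the genuine minimizer $(m^0,w^0)$ of $\s{B}=\s{B}^0$, so that $-w^0/m^0=D_\xi H(\cdot,\nabla\phi^0)$ and hence $j_2(-w^0/m^0)=j_1(\nabla\phi^0)$ a.e.; and for $\s{A}$ use the \emph{twisted translate} $\phi'(t,x):=\phi^0(t,x+\eta(t)\delta)$, where $\eta\colon[0,T]\to[0,1]$ is a fixed smooth profile with $\eta(T)=0$, so that $\phi'(T,\cdot)=\phi_T$ and $\phi'$ is admissible, and $\eta(0)=1$. The Hamilton--Jacobi inequality then forces the companion $\alpha'(t,x):=\alpha^0(t,x+\eta\delta)+\big(H(x,\nabla\phi')-H(x+\eta\delta,\nabla\phi')\big)-\eta'(t)\,\delta\cdot\nabla\phi'$, and one verifies $(\phi',\alpha')\in\s{K}$. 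Inserting $(\phi',\alpha')$ and $(m^0,w^0)$ in the inequality above and using $\s{A}(\phi^0,\alpha^0)=-\min\s{B}$, the left-hand side becomes $\s{A}(\phi',\alpha')-\min\s{A}\ge 0$, so the whole coercivity gap is controlled by this suboptimality, and the key point is that it is \emph{quadratic}: $\s{A}(\phi',\alpha')-\min\s{A}\le C|\delta|^2$. For a neighbourhood of $t=T$ I would run the mirror construction, keeping $(\phi^0,\alpha^0)$ and taking a twisted translate of $(m^0,w^0)$ with time profile equal to $1$ near $t=T$ and $0$ at $t=0$; under the same hypotheses this produces the analogous bound there.

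The quadratic bound, and the fact that it holds despite the very low regularity of $(\phi^0,m^0)$, is the main obstacle. Writing $\s{A}(\phi',\alpha')=\int_0^T\!\!\int_{\T^d}F^*(x,\alpha')\dd x\dd t-\int_{\T^d}\phi'(0)m_0\dd x$ and performing the change of variables $y=x+\eta(t)\delta$ \emph{before} expanding in $\delta$, all the $\delta$-dependence sits on the data $F,F^*,H$ and $m_0$ rather than on the merely $BV$ function $\phi^0$; since $\s{A}(\phi',\alpha')\ge\min\s{A}$ with equality at $\delta=0$, the first-order term in $\delta$ must vanish, and the second-order remainder is dominated by $\|D^2m_0\|_\infty$, $\|D^2\phi_T\|_\infty$ and, via (H9) and the pointwise bound $|D_xH(x,\xi)|+|D_x^2H(x,\xi)|\le C|\xi|^r+C$, by the finite quantities $\int_0^T\!\!\int_{\T^d}m^0(|\nabla\phi^0|^r+1)\dd x\dd t$ and $\int_0^T\!\!\int_{\T^d}((m^0)^q+1)\dd x\dd t$. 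To make the vanishing of the first-order term and the quadratic remainder rigorous I would carry out these computations on the smooth approximants $\phi_n$ of $\phi^0$ furnished by the relaxed problem and pass to the limit in the uniform estimates.

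Granting the quadratic bound, the coercivity gap forces
\begin{equation*}
\int_0^T\!\!\int_{\T^d}m^0\,\big|j_1(\nabla\phi^0)(t,x+\eta(t)\delta)-j_1(\nabla\phi^0)(t,x)\big|^2\dd x\dd t\ \le\ C|\delta|^2,
\end{equation*}
while the Bregman term $B_F(x;m^0,\alpha')$, combined with $\alpha'=\alpha^0(t,x+\eta\delta)+O(|\delta|)$, $\alpha^0=f(\cdot,m^0)$ and $f(x,m)^{p/2}\simeq m^{q/2}$, yields the corresponding bound for the difference quotients of $(m^0)^{q/2}$, up to lower-order corrections controlled by the same finite quantities; the mirror construction gives both statements near $t=T$. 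Reading these as weighted, sheared difference quotients, letting $\delta$ range over $\T^d$ and using also $-\delta$ to symmetrize the degenerate weight $m^0$, one passes to the limit to obtain $m^0\,|\nabla_x((m^0)^{q/2})|^2$ and $m^0\,|D_x(j_1(\nabla\phi^0))|^2$ in $L^1([0,T]\times\T^d)$, with the neighbourhoods of $t=0$ and $t=T$ coming from $\eta(0)=1$ and from the mirror profile. Since $m^{q/2-1}\nabla m=\tfrac2q\nabla(m^{q/2})$ and $j_1$ is explicit from (H8), this is exactly $\|m^{q/2-1}\nabla m\|_{L^2([0,T]\times\T^d)}\le C$ and $\|m^{1/2}D(j_1(\nabla\phi))\|_{L^2([0,T]\times\T^d)}\le C$.
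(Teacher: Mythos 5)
Your plan hinges on one central claim: that the twisted translate $\phi'(t,x)=\phi(t,x+\eta(t)\delta)$, with its forced companion $\alpha'$, satisfies $\s{A}(\phi',\alpha')-\min\s{A}\le C|\delta|^2$. This is exactly where the argument has a genuine gap. After your change of variables $y=x+\eta(t)\delta$ it is \emph{not} true that all the $\delta$-dependence sits on the data: the drift-correction term $-\eta'(t)\,\delta\cdot\nabla\phi(t,y)$ remains inside the argument of $F^*$, and it involves the merely $L^r$ (respectively $L^r_m$) gradient of the weak solution itself. To run your ``first-order term vanishes by optimality, second-order remainder is bounded'' argument you need the map $\delta\mapsto\s{A}(\phi',\alpha')$ to be $C^{1,1}$ near $0$, and its second derivative contains terms of the type $D^2_{aa}F^*(\cdot,\alpha)\,|\eta'|^2|\delta\cdot\nabla\phi|^2$; under the hypotheses of the proposition ((H7)--(H9), with $\nabla\phi\in L^r$, $m\in L^q$) there is no second-order control of $F^*$ and no integrability of such products, and in the time-dependent section the paper deliberately avoids any (H5)/(H6)-type smoothness of $F^*$. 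Even the admissibility $(\phi',\alpha')\in\s{K}$ is unclear, since the corrections to $\alpha$ (the $H$-increment of size $|\delta|(|\nabla\phi'|^r+1)$ and the drift term) are only $L^1$, not $L^p$, so $\alpha'_+\in L^p$ and finiteness of $\s{A}(\phi',\alpha')$ are not guaranteed. The same obstruction reappears, worse, in your ``mirror'' construction near $t=T$: perturbing $(m,w)$ by a time-modulated shift forces an extra drift $\pm\eta'(t)\delta\, m$ in $w$, and the quadratic error then requires second-order (or at least Lipschitz-gradient) control of $H^*(x,\cdot)$ — precisely the restriction the paper points out in \cite{ProSan} and which its own proof is designed to avoid. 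A smaller but real slip: your limiting statement for the density carries a spurious weight ($m\,|\nabla(m^{q/2})|^2\in L^1$), which is weaker than the claimed $\|m^{q/2-1}\nabla m\|_{L^2}\le C$.

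The paper's proof proceeds by a different mechanism which bypasses all of this. It never builds time-modulated competitors and never expands $F^*$: it takes pure spatial translates by $+\delta$ and $-\delta$, uses the smooth minimizing sequence $\phi_n^{\pm\delta}$ as test functions in the continuity equations for $m$ and $m^{\pm\delta}$, and combines with the weak-solution identity \eqref{eq:ibp_weak}. Summing the $+\delta$ and $-\delta$ relations makes the first-order terms cancel by symmetry, leaving only genuine second differences of the data ($D^2_{xx}H$ via (H9), $m_0$ and $\phi_T$ via $W^{2,\infty}$), each of size $O(|\delta|^2)$, with no differentiability-in-$\delta$ of a value function ever invoked. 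The coupling term $\int(f^\delta(m^\delta)+f^{-\delta}(m^{-\delta})-2f(m))m$ is then handled directly by the Lipschitz-in-$x$ bound and the strong monotonicity \eqref{f strongly monotone}: the monotonicity produces, with the right sign, the unweighted difference-quotient term $\min\{(m^\delta)^{q-2},m^{q-2}\}|m^\delta-m|^2$, which is what yields the first conclusion, while (H8) applied to the remaining Young-gap gives the $m$-weighted estimate on $j_1(\nabla\phi)$. If you want to salvage your route you would have to add substantial hypotheses (smoothness of $F^*$ and $H^*$, Lipschitz $D_\xi H$) that the proposition does not assume; as written, the quadratic suboptimality bound is unproven and the approach does not close.
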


\begin{proof}
	\emph{Step 1.}
	As above, let 
	$(m,w) \in \s{K}_1(m_0)$ be a minimizer for $\s{B} = \s{B}_{H,F,\phi_T}$. 
	Initially we take a minimizing sequence of smooth functions $\phi_n \in \s{K}_0(\phi_T)$ such that $\s{A}(\phi_n) \to \inf \s{A}$.
	Here $\s{A} = \s{A}_{F,m_0}$.
	Denoting the translates as above, we see that $\phi_n^\delta \in \s{K}_0(\phi_T^\delta)$ is also a minimizing sequence for $\s{A}^\delta = \s{A}_{F^\delta,m_0^\delta}$.
	
	Following the argument in \cite{CarGra} we get that, up to a subsequence, $\phi_n \to \phi$ in $L^\gamma([0,T]\times\T^d)$ with $1\leq \gamma<d/(d-1)$, $\nabla \phi_n \rightharpoonup \nabla \phi$ weakly in $L^r([0,T]\times\T^d;\R^d)$ as $n\to +\infty$, and for $\alpha_n := -\partial_t \phi_n + H(x,\nabla \phi_n)$ we have $\tilde \alpha_n := \alpha_n \chi_{\{{\alpha_n} \geq -l_n\}} \to \alpha$ in $L^1([0,T]\times\T^d)$ as $n\to +\infty$, for any given sequence $(l_n)_{n\in\mathbb{N}}$ such that $l_n \to \infty$ as $n\to+\infty$; moreover, $(\phi,\alpha) \in \s{K}$ is a minimizer of $\s{A}$.
	In particular, for all $M > 0$ we can pass to a subsequence on which $\alpha_n \chi_{\{\alpha_n \geq -M\}} \to \alpha \chi_{\{\alpha \geq -M\}}$ in $L^p([0,T]\times\T^d)$.
	
	 We now show that (up to a subsequence) $(\nabla \phi_n)$ converges weakly to $\nabla \phi$ in $L^r_m([0,T] \times \bb{T}^d;\bb{R}^d)$.
	To see this, first use $\phi_n$ as a test function in $\partial_t m + \nabla \cdot w = 0$ to get
	\begin{equation} \label{eq:phi_n-test-m}
	\int_{\bb{T}^d} \phi_T m(T) - \phi_n(0)m_0 = \int_0^T \int_{\bb{T}^d} (H(x,\nabla \phi_n)-\alpha_n)m + \nabla \phi_n \cdot w \dd x\dd t.
	\end{equation}
	Using \eqref{eq:hamiltonian_bounds} we get
	\begin{equation}
	\frac{1}{C}\int_0^T \int_{\bb{T}^d} |\nabla \phi_n|^r m
	\leq \|\phi_T\|_\infty + \int_{\bb{T}^d}|\phi_n(0)|m_0
	+ \int_0^T \int_{\bb{T}^d} (\alpha_n)_+ m + |\nabla \phi_n|m\left|\frac{w}{m}\right| \dd x\dd t,
	\end{equation}
	then apply Young's inequality to get
	\begin{equation} \label{eq:bound-on-phi_n}
	\frac{1}{C}\int_0^T \int_{\bb{T}^d} |\nabla \phi_n|^r m
	\leq \|\phi_T\|_\infty + \int_{\bb{T}^d}|\phi_n(0)|m_0
	+ \int_0^T \int_{\bb{T}^d} (\alpha_n)_+ m + Cm\left|\frac{w}{m}\right|^{r'} \dd x\dd t
	\end{equation}
	where, as usual, $C$ takes on a larger value.
	As argued in \cite{CarGraPorTon}, $\limsup_{n \to \infty} \int_{\bb{T}^d}|\phi_n(0)|m_0 \leq C$.
	Since $(\alpha_n)_+$ is bounded in $L^p$ and $m\left|\frac{w}{m}\right|^{r'}$ is integrable, this means that the right-hand side of \eqref{eq:bound-on-phi_n} is bounded by a constant, as desired.
	Thus, we can assume that (up to a subsequence) $(\nabla \phi_n)$ converges weakly to some $\xi$ in $L^r_m([0,T] \times \bb{T}^d;\bb{R}^d)$.
	
	We now claim that $\xi = \nabla \phi$, $m$-a.e.~in $[0,T]\times\T^d$.
	That is, we claim that $m\xi = m\nabla \phi$ a.e.
	It is enough to show that
	\begin{equation}
	\int_0^T \int_{\bb{T}^d} \psi \cdot \nabla \phi_n \ m \to \int_0^T \int_{\bb{T}^d} \psi \cdot \nabla \phi \ m
	\end{equation}
	for all $\psi \in C_c^\infty((0,T) \times \bb{T}^d; \bb{R}^d)$.
	Let $K > 0$ (large) and $\e > 0$ (small).
	Since $(\nabla\phi_n)$ converges weakly to $\nabla\phi$ in $L^r([0,T] \times \bb{T}^d;\bb{R}^d)$, we have
	\begin{equation} \label{eq:nabla phi_n convergence}
	\iint_{\{m \leq K\}} \psi \cdot \nabla \phi_n \ m \dd x \dd t \to \iint_{\{m \leq K\}} \psi \cdot \nabla \phi \ m \dd x \dd t.
	\end{equation}
	On the other hand, Young's inequality yields
	\begin{equation}\label{eq:zero}
	\left|\iint_{\{m {\geq} K\}} \psi \cdot \nabla \phi_n m \dd x \dd t\right|
	\leq \frac{\e^r}{r}\|\psi\|_\infty \int_0^T \int_{\bb{T}^d} |\nabla \phi_n|^r m \dd x \dd t
	+ \frac{\e^{-r'}}{r'}\|\psi\|_\infty \iint_{\{m {\geq} K\}} m \dd x \dd t
	\end{equation}
	so that, by \eqref{eq:bound-on-phi_n}, we have
	\begin{equation}
	\limsup_{n \to \infty} \left|\iint_{\{m {\geq} K\}} \psi \cdot \nabla \phi_n m \dd x \dd t\right|
	\leq C\frac{\e^r}{r}\|\psi\|_\infty + \frac{\e^{-r'}}{r'}\|\psi\|_\infty \iint_{\{m {\geq} K\}} m \dd x \dd t.
	\end{equation}
	Since $m$ is bounded in $L^q([0,T]\times\T^d)$, one has 
	$$\iint_{\{m\ge K\}}m\dd x\dd t\le \left(\iint_{\{m\ge K\}}1\dd x\dd t\right)^{1/q'}\|m\|_{L^q},$$
	thus Chebyshev's inequality yields that $\iint_{\{m\ge K\}}m\dd x\dd t\to 0,$ as $K\to+\infty.$ Thus, we let $K \to \infty$ and then $\e \to 0$ to get in \eqref{eq:zero}
	$$
	\limsup_{K \to \infty}\limsup_{n \to \infty} \left|\iint_{\{m {\geq} K\}} \psi \cdot \nabla \phi_n m \dd x \dd t\right| = 0,
	$$
	In fact, by similar arguments,
	$$
	\limsup_{K \to \infty}\limsup_{n \to \infty} \left|\iint_{\{m {\geq} K\}} \psi \cdot \nabla \phi_n \ m \dd x \dd t-\iint_{\{m {\geq} K\}} \psi \cdot \nabla \phi \ m \dd x \dd t\right| = 0,
	$$
	which, combined with \eqref{eq:nabla phi_n convergence}, proves the claim.	

	\emph{Step 2.}
	Now use $\phi_n^\delta$ and $\phi_n$ as test functions in $\partial_t m + \nabla \cdot w = 0$ and $\partial_t m^\delta + \nabla \cdot w^\delta = 0$ respectively to get
	\begin{equation} \label{eq:phi_n^delta-test-m}
	\int_{\bb{T}^d} \phi_T^\delta m(T) - \phi_n^\delta(0)m_0 = \int_0^T \int_{\bb{T}^d} (H(x+\delta,\nabla \phi_n^\delta)-\alpha_n^\delta)m + \nabla \phi_n^\delta \cdot w \dd x\dd t
	\end{equation}
	and
	\begin{equation} \label{eq:phi_n-test-m^delta}
	\int_{\bb{T}^d} \phi_T m^\delta(T) - \phi_n(0)m^\delta_0 = \int_0^T \int_{\bb{T}^d} (H(x,\nabla \phi_n)-\alpha_n)m^\delta + \nabla \phi_n \cdot w^\delta \dd x\dd t
	\end{equation}
	Combine \eqref{eq:phi_n^delta-test-m} with \eqref{eq:ibp_weak} and use the regularity of $H$ to get
	\begin{multline} \label{eq:space-regularity1}
	\int_{\bb{T}^d} (\phi_T^\delta - \phi_T) m(T) - (\phi^\delta_n(0)-\phi(0))m_0 
	\\
	=
	\int_0^T \int_{\bb{T}^d} (H(x+\delta,\nabla \phi^\delta_n) + H^*(x,-w/m) + \nabla \phi^\delta_n \cdot w/m - \alpha_n^\delta + f(m))m  \dd x\dd t
	\\
	=\int_0^T \int_{\bb{T}^d} (H(x,\nabla \phi^\delta_n) + H^*(x,-w/m) + \nabla \phi^\delta_n \cdot w/m - \alpha_n^\delta + f(m))m  \dd x\dd t	\\
	+ \int_0^T \int_{\bb{T}^d} \int_0^1 \langle D_x H(x + s\delta,\nabla \phi^\delta_n),\delta \rangle \dd s\dd x\dd t.
	\end{multline}
	Likewise, combine \eqref{eq:phi_n-test-m^delta} with \eqref{eq:ibp_weak} (but applied to translates) to get
	\begin{multline} \label{eq:space-regularity2}
	\int_{\bb{T}^d} (\phi_T - \phi_T^\delta) m^\delta(T) - (\phi_n(0)-\phi^\delta(0))m^\delta_0 
	\\
	=
	\int_0^T \int_{\bb{T}^d} (H(x,\nabla \phi_n) + H^*(x+\delta,-w^\delta/m^\delta) + \nabla \phi_n \cdot w^\delta/m^\delta - \alpha_n + f^\delta(m^\delta))m^\delta  \dd x\dd t
	\\
	=
	\int_0^T \int_{\bb{T}^d} (H(x-\delta,\nabla \phi^{-\delta}_n) + H^*(x,-w/m) + \nabla \phi_n^{-\delta} \cdot w/m - \alpha_n^{-\delta} + f(m))m  \dd x\dd t
	\\
	= \int_0^T \int_{\bb{T}^d} (H(x,\nabla \phi^{-\delta}_n) + H^*(x,-w/m) + \nabla \phi_n^{-\delta} \cdot w/m - \alpha_n^{-\delta} + f(m))m  \dd x\dd t
	\\
	- \int_0^T \int_{\bb{T}^d} \int_0^1 \langle D_x H(x - s\delta,\nabla \phi^{-\delta}_n),\delta \rangle \dd s\dd x\dd t.
	\end{multline}
	Note that, by the changes of variables $x \mapsto x + \delta$ for the first integral and $x \mapsto x - \delta$ for the second, followed by the translation $s \mapsto 1-s$, we get
	\begin{multline*}
	\int_0^T \int_{\bb{T}^d} \int_0^1 \langle D_x H(x + s\delta,\nabla \phi^{\delta}_n),\delta \rangle \dd s\dd x\dd t
	-
	\int_0^T \int_{\bb{T}^d} \int_0^1 \langle D_x H(x - s\delta,\nabla \phi^{-\delta}_n),\delta \rangle \dd s\dd x\dd t	
	\\
	= \int_0^T \int_{\bb{T}^d} \int_0^1 \langle D_x H(x + s\delta,\nabla \phi_n) - D_x H(x - s\delta,\nabla \phi_n),\delta \rangle \dd s\dd x\dd t
	\\
	= \int_0^T \int_{\bb{T}^d} \int_0^1 \int_{-s}^s \langle D_{xx}^2 H(x + r\delta,\nabla \phi_n)\delta,\delta \rangle \dd r\dd s\dd x\dd t.
	\end{multline*}
	Then adding together \eqref{eq:space-regularity1} and \eqref{eq:space-regularity2} we get
	\begin{multline} \label{eq:space-regularity3}
	\int_0^T \int_{\bb{T}^d} \left(H(x,\nabla \phi^\delta_n) +H(x,\nabla \phi^{-\delta}_n)+ 2H^*(x,-w/m) + \nabla \phi^\delta_n \cdot w/m+\nabla \phi^{-\delta}_n \cdot w/m\right)m  \dd x\dd t
	\\
	= \int_{\bb{T}^d} \left(\phi_T^\delta + \phi_T^{-\delta} - 2\phi_T\right)m(T)\dd x
	 -  \int_{\bb{T}^d} (\phi_n(0)(m_0^\delta + m_0^{-\delta}) - 2\phi(0)m_0)\dd x
	 \\
	+ \int_0^T \int_{\bb{T}^d} \left(\alpha_n^\delta + \alpha_n^{-\delta} - 2f(m)\right)m \dd x \dd t
	+ \int_0^T \int_{\bb{T}^d} \int_0^1 \int_{-s}^s \langle D_{xx}^2 H(x + r\delta,\nabla \phi_n)\delta,\delta \rangle \dd r\dd s\dd x\dd t.
	\end{multline}
Now we pass to the limit as $n\to+\infty$ in each term above. First, we since $H$ is convex in the second variable and $(\nabla \phi_n^{\pm\delta})$ converges weakly to $\nabla \phi^{\pm\delta}$ in $L^r_m([0,T]\times\T^d;\R^d)$, by weak lower semicontinuity we get
\begin{equation}\label{ineq:liminf1}
\int_0^T \int_{\bb{T}^d} \left(H(x,\nabla \phi^\delta) +H(x,\nabla \phi^{-\delta})\right)m  \dd x\dd t\le\liminf_{n\to+\infty}\int_0^T \int_{\bb{T}^d} \left(H(x,\nabla \phi^\delta_n) +H(x,\nabla \phi^{-\delta}_n)\right)m  \dd x\dd t.
\end{equation} 
Then, by the previous weak convergence and by the fact that $w/m\in L^{r'}_m([0,T]\times\T^d;\R^d)$, we have that 
\begin{equation}\label{ineq:liminf2}
\int_0^T \int_{\bb{T}^d} \left(\nabla \phi^\delta \cdot w/m+\nabla \phi^{-\delta} \cdot w/m\right)m  \dd x\dd t=\lim_{n\to+\infty}\int_0^T \int_{\bb{T}^d} \left(\nabla \phi^\delta_n \cdot w/m+\nabla \phi^{-\delta}_n \cdot w/m\right)m  \dd x\dd t
\end{equation}

Second, let us compute

	\begin{multline} \label{eq:space-regularity4}
	\liminf_{n \to \infty} \left\{\int_{\bb{T}^d} \left(\phi_T^\delta + \phi_T^{-\delta} - 2\phi_T\right)m(T)\dd x
	-  \int_{\bb{T}^d} \left(\phi_n(0)(m_0^\delta + m_0^{-\delta}) - 2\phi(0)m_0\right)\dd x \right.
	\\
	+ \left. \int_0^T \int_{\bb{T}^d} \left(\alpha_n^\delta + \alpha_n^{-\delta} - 2f(m)\right)m \dd x \dd t
	+ \int_0^T \int_{\bb{T}^d} \int_0^1 \int_{-s}^s \langle D_{xx}^2 H(x + r\delta,\nabla \phi_n)\delta,\delta \rangle \dd r\dd s\dd x\dd t \right\}.\\
	\leq \limsup_{n \to \infty} \left\{\int_{\bb{T}^d} \left(\phi_T^\delta + \phi_T^{-\delta} - 2\phi_T\right)m(T)\dd x
	-  \int_{\bb{T}^d} \left(\phi_n(0)(m_0^\delta + m_0^{-\delta}) - 2\phi(0)m_0\right)\dd x \right.
	\\
	+ \left. \int_0^T \int_{\bb{T}^d} \left(\alpha_n^\delta + \alpha_n^{-\delta} - 2f(m)\right)m \dd x \dd t
	+ \int_0^T \int_{\bb{T}^d} \int_0^1 \int_{-s}^s \langle D_{xx}^2 H(x + r\delta,\nabla \phi_n)\delta,\delta \rangle \dd r\dd s\dd x\dd t \right\}.
	\end{multline}
Let us recall that $(\phi_n)_{n\ge 0}$ converges in $L^1([0,T]\times\T^d)$ to $\phi\in BV([0,T]\times\T^d)$ and $(\phi_n(0))_{n\ge 0}$ is bounded in $L^1(\T^d)$. In particular $\partial_t\phi_n\weakly\partial_t\phi$, as $n\to+\infty$, weakly-$\star$ in $\sM([0,T]\times\T^d).$
These imply furthermore that 
\begin{equation}\label{ineq:liminf3}
\limsup_{n\to+\infty}-  \int_{\bb{T}^d} \phi_n(0)(m_0^\delta + m_0^{-\delta}) \dd x\le -  \int_{\bb{T}^d} \phi(0)(m_0^\delta + m_0^{-\delta})\dd x
\end{equation}
Indeed, on the one hand we have 
\begin{align*}
-\int_{\T^d}\phi_n(0,x)\psi(x)\dd x&=-\int_{\T^d}\phi_T(x)\psi(x)\dd x+\int_0^T\int_{\T^d}\partial_t\phi_n\psi\dd x\dd t\\
&\to-\int_{\T^d}\phi_T(x)\psi(x)\dd x+\int_0^T\int_{\T^d}\partial_t\phi\psi\dd x\dd t\\
&=\int_{\T^d}(\phi(T,x)-\phi_T(x))\psi(x)\dd x-\int_{\T^d}\phi(0,x)\psi(x)\dd x
\end{align*}
as $n\to+\infty$ for all $\psi\in C(\T^d)$.
On the other hand, since $\phi(T,x)\le\phi_T(x)$ for a.e. $x\in\T^d$, for $\psi\ge 0$ one can conclude that
$$\limsup_{n\to+\infty}-  \int_{\bb{T}^d} \phi_n(0)\psi \dd x\le -  \int_{\bb{T}^d} \phi(0)\psi\dd x,$$
which implies \eqref{ineq:liminf3} as desired.

	As for the third term, we take an arbitrary $M > 0$ and get
	\begin{align*}
	\limsup_{n\to \infty} \int_0^T \int_{\bb{T}^d} &\left(\alpha_n^\delta + \alpha_n^{-\delta} - 2f(m)\right)m \dd x \dd t\\
&\leq \limsup_{n\to \infty} \int_0^T \int_{\bb{T}^d} \left(\alpha_n^\delta \chi_{\{\alpha_n^\delta \geq -M\}} + \alpha_n^{-\delta}\chi_{\{\alpha_n^{-\delta} \geq -M\}} - 2f(m)\right)m \dd x \dd t
	\\
	&= \int_0^T \int_{\bb{T}^d} \left(f^\delta(m^\delta)\chi_{\{f^\delta(m^\delta) \geq -M\}}  + f^{-\delta}(m^{-\delta})\chi_{\{f^{-\delta}(m^{-\delta}) \geq -M\}} - 2f(m)\right)m \dd x \dd t,
	\end{align*}
	where we used the fact (see for instance \cite[Theorem 3.5-(i)]{CarGra}) that since $(\phi,\alpha)$ is a minimizer of the primal problem and $(m,w)$ is a minimizer of the dual problem, one has that $\alpha=f(\cdot,m)$ a.e.  This, after letting $M \to \infty$ becomes
	$$
	\limsup_{n\to \infty} \int_0^T \int_{\bb{T}^d} \left(\alpha_n^\delta + \alpha_n^{-\delta} - 2f(m)\right)m \dd x \dd t
	\leq 
	\int_0^T \int_{\bb{T}^d} \left(f^\delta(m^\delta)  + f^{-\delta}(m^{-\delta}) - 2f(m)\right)m \dd x \dd t.
	$$
	Lastly,  by \eqref{eq:D_x^2 H} we can assert
	\begin{equation} \label{eq:space-regularity8}
	\left|\int_0^T \int_{\bb{T}^d} \int_0^1 \int_{-s}^s \langle D_{xx}^2 H(x + r\delta,\nabla \phi_n)\delta,\delta \rangle \dd r\dd s\dd x\dd t \right|
	\leq C(\|\nabla \phi_n\|_{r}^{r} + 1)|\delta|^2 \leq C|\delta|^2.
	\end{equation}
Thus, when passing to the $\liminf$ in \eqref{eq:space-regularity3} as $n\to+\infty$, the previous inequalities and estimations imply, 	

\begin{multline} \label{eq:space-regularity3-limit}
	\int_0^T \int_{\bb{T}^d} \left(H(x,\nabla \phi^\delta) + H^*(x,-w/m) + \nabla \phi^\delta \cdot w/m\right)m  \dd x\dd t\\ 
	+\int_0^T\int_{\T^d}\left(H(x,\nabla \phi^{-\delta})+ H^*(x,-w/m) +\nabla \phi^{-\delta} \cdot w/m\right)m  \dd x\dd t \\
	\le \int_{\bb{T}^d} \left(\phi_T^\delta + \phi_T^{-\delta} - 2\phi_T\right)m(T)\dd x
	 -  \int_{\bb{T}^d} \phi(0)(m_0^\delta + m_0^{-\delta} - 2m_0)\dd x
	 \\
	+ \int_0^T \int_{\bb{T}^d} \left(f^\delta(m^\delta)  + f^{-\delta}(m^{-\delta}) - 2f(m)\right)m \dd x \dd t
	+ C|\delta|^2
	\end{multline}

	Now, the assumption \eqref{eq:Hcoercivity} on $H$ together with \eqref{eq:space-regularity3-limit} on the one hand and the inequality $|a+b|^2 \leq 2(|a|^2+|b|^2),\ \forall a,b\in\R^d$ on the other hand yield
	\begin{multline} \label{eq:space-regularity5}
	\frac{c_0}{2}\int_0^T \int_{\bb{T}^d} \left(|j_1(\nabla \phi^\delta) - j_1(\nabla \phi^{-\delta})|^2\right) m \dd x \dd t
	\\
	\leq \int_{\bb{T}^d} \left(\phi_T^\delta + \phi_T^{-\delta} - 2\phi_T\right)m(T)\dd x
	-  \int_{\bb{T}^d} \phi(0)\left(m_0^\delta + m_0^{-\delta} - 2m_0\right)\dd x 
	\\
	+  \int_0^T \int_{\bb{T}^d} \left(f^\delta(m^\delta)  + f^{-\delta}(m^{-\delta}) - 2f(m)\right)m \dd x \dd t
	+ C|\delta|^2.
	\end{multline}

	\emph{Step 3.}
	We estimate the terms in \eqref{eq:space-regularity5}.
	First, we have
	\begin{equation} \label{eq:space-regularity6}
	\int_{\bb{T}^d} \phi(0)\left(2m_0 - m_0^\delta-m_0^{-\delta}\right)\dd x
	\leq C|\delta|^2\int_{\bb{T}^d} |\phi(0)|\dd x,
	\end{equation}
	where $C$ depends on $\|m_0\|_{W^{2,\infty}}$, and we know that $\int_{\bb{T}^d} |\phi(0)|\dd x$ is well-defined and finite (see \cite[Lemma 5.1]{CarGraPorTon}).
	Similarly,
	\begin{equation} \label{eq:space-regularity7}
	\int_{\bb{T}^d} m(T)\left(2\phi_T - \phi_T^\delta-\phi_T^{-\delta}\right)\dd x
	\leq C|\delta|^2\int_{\bb{T}^d} m(T)\dd x=C|\delta|^2,
	\end{equation}
	where $C$ depends on $\|\phi_T\|_{W^{2,\infty}}$.
	There is one more term to estimate.
	We rewrite it using change of variables and then apply Equations \eqref{f strongly monotone} and \eqref{f Lipschitz in x} to get
	\begin{multline} \label{eq:space-regularity9}
	\int_0^T \int_{\bb{T}^d} (f^\delta(m^\delta)  + f^{-\delta}(m^{-\delta}) - 2f(m))m \dd x \dd t
	= -\int_0^T \int_{\bb{T}^d} (f^\delta(m^\delta) - f(m))(m^{\delta} - m) \dd x \dd t
	\\
	= - \iint_{\{m^\delta \leq m\}} (f^\delta(m^\delta) - f(m^\delta))(m^{\delta} - m) \dd x \dd t 
	- \iint_{\{m^\delta \leq m\}} (f(m^\delta) - f(m))(m^{\delta} - m) \dd x \dd t
	\\
	- \iint_{\{m < m^\delta\}} (f^\delta(m^\delta) - f^\delta(m))(m^{\delta} - m) \dd x \dd t
	- \iint_{\{m < m^\delta\}} (f^\delta(m) - f(m))(m^{\delta} - m) \dd x \dd t
	\\
	\leq C\int_0^T \int_{\bb{T}^d} |\delta|\min\{(m^\delta)^{q-1},m^{q-1}\}|m^{\delta} - m| \dd x \dd t 
	- c_0\int_0^T \int_{\bb{T}^d} \min\{(m^\delta)^{q-2},m^{q-2}\}|m^\delta - m|^2 \dd x \dd t
	\\
	\leq C|\delta|^2 \int_0^T \int_{\bb{T}^d} \min\{m^\delta,m\}^{q} \dd x \dd t 
	- \frac{c_0}{2}\int_0^T \int_{\bb{T}^d} \min\{(m^\delta)^{q-2},m^{q-2}\}|m^\delta - m|^2 \dd x \dd t,
	\end{multline}
	where, we used Young's inequality in the last inequality, and the expression $\min\{(m^\delta)^{q-2},m^{q-2}\}|m^\delta - m|^2$ is  treated as zero whenever $m^\delta = m$ (even in the case $q < 2$).
	Since
	$$
	\int_0^T \int_{\bb{T}^d} \min\{m^\delta,m\}^{q} \dd x \dd t \leq \int_0^T \int_{\bb{T}^d} m^q \dd x \dd t  \leq C,
	$$
	we see from plugging \eqref{eq:space-regularity6}, \eqref{eq:space-regularity7}, and \eqref{eq:space-regularity9} into \eqref{eq:space-regularity5} that
	\begin{multline} 
	\frac{c_0}{2}\int_0^T \int_{\bb{T}^d} \left(|j_1(\nabla \phi^\delta) - j_1(\nabla \phi^{-\delta})|^2\right) m \dd x \dd t
	\\
	+ \frac{c_0}{2}\int_0^T \int_{\bb{T}^d} \min\{(m^\delta)^{q-2},m^{q-2}\}|m^\delta - m|^2 \dd x \dd t
	\leq C|\delta|^2,
	\end{multline}
	where $C$ depends only on the data.	
	The result now follows from dividing both sides by $|\delta|^2$ and letting $\delta \to 0$.
\end{proof}

As a potential application of Proposition \ref{prop:space-regularity}, one might hope to derive second order in space regularity for $\phi$.
For this, one would like some summability estimate on $m^{-1}$.
Since, thanks to \cite{CarGra}, the growth of $f(x,m)$ is unrestricted for small values of $m$, all of our results are still valid for data satisfying, for example, $F(x,m) \geq \frac{1}{C}m^{-s}- C$ for some $s \geq 1$, thus providing the desired estimate.
Such a case would correspond to extreme congestion penalization, in which players experience a great benefit by moving toward unoccupied spaces.
In this context, the following corollary is meaningful.

\begin{corollary}
	Suppose $m^{-1} \in L^s([0,T]\times\T^d)$ for some $s \geq 1$.
	Then 
	\begin{equation} \label{eq:space-regularity-j_1(nabla phi_n)}
	\|\nabla (j_1(\nabla \phi))\|_{\frac{2s}{s+1}}^2 \leq C\|m^{-1}\|_{s},
	\end{equation}
	where, for simplicity of writing $\|\cdot\|_{\eta}$ denotes the norm $\|\cdot\|_{L^\eta([0,T]\times\T^d)}$.
\end{corollary}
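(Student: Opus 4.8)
The plan is to derive \eqref{eq:space-regularity-j_1(nabla phi_n)} from the weighted estimate $\|m^{1/2}D(j_1(\nabla \phi))\|_{2}\le C$ furnished by Proposition \ref{prop:space-regularity} by a single application of H\"older's inequality. The point is that $m^{-1}\in L^s([0,T]\times\T^d)$ is the same as $m^{-1/2}\in L^{2s}([0,T]\times\T^d)$ with $\|m^{-1/2}\|_{2s}=\|m^{-1}\|_{s}^{1/2}$, and that $\tfrac12+\tfrac1{2s}=\tfrac{s+1}{2s}$, i.e.\ $\tfrac{2s}{s+1}$ is the exponent conjugate to the pair $(2,2s)$. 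Writing
\[
D(j_1(\nabla \phi)) = \bigl(m^{1/2}D(j_1(\nabla \phi))\bigr)\cdot m^{-1/2},
\]
H\"older's inequality then gives $\|D(j_1(\nabla \phi))\|_{\frac{2s}{s+1}}\le \|m^{1/2}D(j_1(\nabla \phi))\|_{2}\,\|m^{-1/2}\|_{2s}\le C\|m^{-1}\|_{s}^{1/2}$, and squaring yields the claimed bound.

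To turn this into a rigorous argument---in particular to ensure that $D(j_1(\nabla\phi))$ is a genuine function and not merely an object defined $m$-a.e.---I would run the same computation at the level of symmetric difference quotients, which is the form in which the estimate is actually produced in Proposition \ref{prop:space-regularity}. Fix a coordinate direction $e_i$ and, for $h\neq0$ small, set $\Delta_h^i g(t,x):=\tfrac1{2h}\bigl(g(t,x+he_i)-g(t,x-he_i)\bigr)$. Taking $\delta=he_i$ in the bound $\int_0^T\int_{\T^d}|j_1(\nabla\phi^\delta)-j_1(\nabla\phi^{-\delta})|^2\,m\,\dd x\,\dd t\le C|\delta|^2$ established in the proof of Proposition \ref{prop:space-regularity}, and using that $j_1(\nabla\phi^{\pm\delta})=(j_1(\nabla\phi))^{\pm\delta}$, gives
\[
\int_0^T\!\!\int_{\T^d}\bigl|\Delta_h^i(j_1(\nabla\phi))\bigr|^2 m\,\dd x\,\dd t\le C
\]
uniformly in $h$. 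Applying the H\"older splitting above to $\Delta_h^i(j_1(\nabla\phi))=\bigl(m^{1/2}\Delta_h^i(j_1(\nabla\phi))\bigr)m^{-1/2}$ then produces $\|\Delta_h^i(j_1(\nabla\phi))\|_{\frac{2s}{s+1}}\le C\|m^{-1}\|_{s}^{1/2}$, uniformly in $h$.

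For $s>1$ the exponent $\frac{2s}{s+1}$ lies strictly between $1$ and $2$, so $L^{\frac{2s}{s+1}}([0,T]\times\T^d)$ is reflexive; extracting a weakly convergent subsequence as $h\to0$ and testing against smooth functions identifies the weak limit with the distributional derivative $\partial_{x_i}(j_1(\nabla\phi))$, which therefore belongs to $L^{\frac{2s}{s+1}}$ with the stated bound, and summing over $i=1,\dots,d$ gives \eqref{eq:space-regularity-j_1(nabla phi_n)}. (At the endpoint $s=1$ one has $\frac{2s}{s+1}=1$ and the argument only yields a uniform $L^1$ bound on difference quotients, i.e.\ spatial-$BV$ regularity of $j_1(\nabla\phi)$, which is all one can expect there.) There is essentially no hard step here: the entire content is the H\"older interpolation between the weighted gradient bound of Proposition \ref{prop:space-regularity} and the integrability of $m^{-1}$; the only mild subtlety is the passage from the weighted difference-quotient estimate to an unweighted Sobolev bound, which is precisely where the hypothesis $m^{-1}\in L^s$ with $s>1$ is used.
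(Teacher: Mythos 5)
Your proof is correct and follows essentially the same route as the paper: a H\"older splitting with the weight $m$ applied to the symmetric difference-quotient bound from Proposition \ref{prop:space-regularity}, followed by dividing by $|\delta|^2$ and letting $\delta\to 0$. Your extra care in passing from the uniform bound on difference quotients to the Sobolev estimate (and the caveat that $s=1$ only yields a $BV$-type bound) merely makes explicit what the paper's one-line proof leaves implicit.
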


\begin{proof}
	Observe that by H\"older's inequality and Proposition \ref{prop:space-regularity}, we have
	\begin{equation}
	\|j_1(\nabla \phi^\delta) - j_1(\nabla \phi^{-\delta})\|_{\frac{2s}{s+1}}^2 
	\leq \|m^{-1}\|_{s}\int_0^T \int_{\bb{T}^d} (|j_1(\nabla \phi^\delta) - j_1(\nabla \phi^{-\delta})|^2) m \dd x \dd t\le C\|m^{-1}\|_{s}|\d|^2,
	\end{equation}
which implies the thesis of this corollary dividing both sides by $|\d|^2$ and taking $\d\to 0$.	
\end{proof}

\subsection{Some time regularity}

Assume that $r = 2$, so that the Hamiltonian is of quadratic growth.
In line with this assumption, we will assume that
\begin{equation} \label{eq:D^2 H bounded}
|D_{pp}^2 H(x,\xi)| \leq C, \ |D_{xp}^2 H(x,\xi)| \leq C(|\xi| + 1) \ \ \forall x \in \bb{T}^d, \forall \xi \in \bb{R}^d.\tag{H10}
\end{equation}
Then the result of Proposition \ref{prop:space-regularity} reads
\begin{equation} \label{eq:space-regularity-r=2}
\|m^{q/2 -1}\nabla m\|_{L^2([0,T]\times\T^d)}, \|m^{1/2}D^2 \phi\|_{L^2([0,T]\times\T^d)} \leq C.
\end{equation}

Recalling that $w = -mD_\xi H(x,\nabla\phi)$, we compute
\begin{equation}
-\nabla \cdot w = \nabla m \cdot D_\xi H(x,\nabla\phi) + m \ \mathrm{tr}  (D_{xp}^2 H(x,\nabla \phi)) + m \ \mathrm{tr}(D_{pp}^2 H(x,\nabla \phi)D^2 \phi)
\end{equation}
and deduce from \eqref{eq:D^2 H bounded} that
\begin{equation}
m^{q/2 -1}|\nabla \cdot w| \leq Cm^{q/2 -1}|\nabla m|(|\nabla \phi| + 1) + Cm^{q/2}(|\nabla \phi| + 1) + Cm^{q/2 -1/2}m^{1/2}|D^2 \phi|.
\end{equation}
Now since $\|H(x,\nabla \phi)\|_{L^1} \leq C$, we have that $|\nabla \phi| \in L^2([0,T]\times\T^d)$ by \eqref{eq:hamiltonian_bounds}.
Moreover, since $q > 1$ and $\|m\|_{L^q} \leq C$ it follows (by H\"older's inequality) that $\|m^{q/2 -1/2}\|_{L^2} \leq C$.
Therefore \eqref{eq:space-regularity-r=2} implies, by H\"older's inequality, that
\begin{equation} \label{eq:div w regularity}
\|m^{q/2 -1}\nabla \cdot w\|_{L^1([0,T]\times\T^d)} \leq C.
\end{equation}
We deduce the following:
\begin{corollary} [Global $L^1$ bounds on $\partial_t m$]\label{cor:time-reg}
	Assume $r = 2$ and that \eqref{eq:D^2 H bounded} holds.
	Suppose $(m,w)$ is the minimizer of $\s{B}$.
	Then there exist $C>0$ depending only on the data such that 
	$$\|\partial_t (m^{q/2})\|_{L^1([0,T]\times\T^d)} \leq C.$$
\end{corollary}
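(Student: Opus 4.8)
The plan is to combine the continuity equation for $(m,w)$ with the space‑regularity estimate \eqref{eq:div w regularity} by means of a chain rule. Since $\partial_t m+\nabla\cdot w=0$ in the sense of distributions, formally
\begin{equation*}
\partial_t\big(m^{q/2}\big)=\tfrac{q}{2}\,m^{q/2-1}\,\partial_t m=-\tfrac{q}{2}\,m^{q/2-1}\,\nabla\cdot w,
\end{equation*}
so taking $L^1$ norms and using \eqref{eq:div w regularity} gives $\|\partial_t(m^{q/2})\|_{L^1([0,T]\times\T^d)}=\tfrac q2\|m^{q/2-1}\nabla\cdot w\|_{L^1([0,T]\times\T^d)}\le C$. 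Note that $m\in L^q([0,T]\times\T^d)$ yields $m^{q/2}\in L^2([0,T]\times\T^d)$, so $m^{q/2}$ is a bona fide distribution; the task is to show that its distributional time derivative is represented by the $L^1$ function $-\tfrac q2 m^{q/2-1}\nabla\cdot w$.

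The point requiring care is the chain rule, because $m$ is merely $L^q$ and may vanish, while $s\mapsto s^{q/2}$ need not be $C^1$ up to $s=0$ when $q<2$. I would argue by approximation: set $\beta_\e(s)=(s+\e)^{q/2}$ for $\e>0$, which is $C^1$ on $[0,\infty)$ with $\beta_\e'(s)=\tfrac q2(s+\e)^{q/2-1}$, and with $\beta_\e'$ bounded when $q\le 2$. Using that $(m,w)$ is a finite‑energy solution of the continuity equation, so that the renormalization property applies (cf.~\cite{Car15,CarGra}), one gets, for every $\psi\in C^\infty_c((0,T)\times\T^d)$,
\begin{equation*}
\int_0^T\!\!\int_{\T^d}\beta_\e(m)\,\partial_t\psi\,\dd x\dd t=\int_0^T\!\!\int_{\T^d}\beta_\e'(m)\,(\nabla\cdot w)\,\psi\,\dd x\dd t.
\end{equation*}
Then I let $\e\to0$. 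On the left, $\beta_\e(m)\to m^{q/2}$ in $L^1$ (from $0\le\beta_\e(m)-m^{q/2}\le\e^{q/2}$ if $q\le2$, and by dominated convergence if $q\ge2$). On the right, since $w=-mD_\xi H(x,\nabla\phi)$ vanishes on $\{m=0\}$ and $\nabla m=0$ a.e.\ there (as $m\ge 0$), one has $\nabla\cdot w=0$ a.e.\ on $\{m=0\}$, so $\beta_\e'(m)(\nabla\cdot w)\to\tfrac q2 m^{q/2-1}(\nabla\cdot w)$ a.e.; when $q\le2$ the bound $|\beta_\e'(m)|\le\tfrac q2 m^{q/2-1}$ provides the dominating function $\tfrac q2 m^{q/2-1}|\nabla\cdot w|\in L^1$ (by \eqref{eq:div w regularity}), and when $q\ge2$ one may argue directly with $\beta(s)=s^{q/2}$, which is already $C^1$ on $[0,\infty)$, only the growth at infinity being an issue, controlled by $m\in L^q$. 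Passing to the limit yields
\begin{equation*}
\int_0^T\!\!\int_{\T^d} m^{q/2}\,\partial_t\psi\,\dd x\dd t=\tfrac q2\int_0^T\!\!\int_{\T^d} m^{q/2-1}\,(\nabla\cdot w)\,\psi\,\dd x\dd t\qquad\forall\,\psi\in C^\infty_c((0,T)\times\T^d),
\end{equation*}
i.e.\ $\partial_t(m^{q/2})=-\tfrac q2 m^{q/2-1}\nabla\cdot w$ in the sense of distributions, and the claimed bound follows at once from \eqref{eq:div w regularity}.

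The main obstacle I expect is precisely the rigorous justification of this renormalized chain rule for $\partial_t m+\nabla\cdot w=0$ given the limited regularity of the velocity $v=-D_\xi H(x,\nabla\phi)$, which is a DiPerna–Lions type commutator issue. However, this is by now standard for finite‑energy solutions of the MFG continuity equation, and it is moreover eased here by the fact that, thanks to Proposition \ref{prop:space-regularity} (namely $m^{q/2-1}\nabla m\in L^2$ and $m^{1/2}D^2\phi\in L^2$), the divergence $\nabla\cdot w$ is already an honest function through the product‑rule identity displayed just before the statement. Everything else is dominated convergence together with \eqref{eq:div w regularity}.
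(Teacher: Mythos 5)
Your argument is essentially the paper's own proof: the paper simply says ``apply \eqref{eq:div w regularity} to the equation $\partial_t m = -\nabla\cdot w$,'' i.e.\ the formal chain rule $\partial_t(m^{q/2})=-\tfrac q2 m^{q/2-1}\nabla\cdot w$ combined with the bound $\|m^{q/2-1}\nabla\cdot w\|_{L^1}\le C$ established just before the corollary. Your additional $\beta_\e$-approximation merely makes explicit the chain-rule step that the paper takes for granted, so the proposal is correct and follows the same route.
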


\begin{proof}
	Apply \eqref{eq:div w regularity} to the equation $\partial_t m = -\nabla \cdot w$.
\end{proof}

The argument breaks down if $r \neq 2$.
For simplicity, set $H(x,\xi) = \frac{1}{r}|\xi|^r$.
The result of Proposition \ref{prop:space-regularity} reads
\begin{equation} \label{eq:space-regularity-r neq 2}
\|m^{q/2 -1}\nabla m\|_{L^2}, \|m^{1/2}{(|\nabla\phi|^{r/2-1}D^2 \phi+(r/2-1)|\nabla\phi|^{r/2-2}D^2\phi\nabla\phi\otimes\nabla\phi)}\|_{L^2} \leq C.
\end{equation}
We compute
\begin{equation}
-\nabla \cdot w = \nabla m \cdot |\nabla \phi|^{r-2}\nabla \phi + m |\nabla \phi|^{r-2}\Delta \phi+{m(r-2)|\nabla\phi|^{r-4}(D^2\phi\nabla\phi)\cdot\nabla\phi}.
\end{equation}
If $r < 2$, then the last two terms of the r.h.s. are degenerate: it is possible that $m|\nabla \phi|^{r-2}\Delta \phi$ is not integrable, since even though one would have $L^2$ regularity for $|\nabla \phi|^{r/2-1}D^2 \phi$, there are no estimates on the remaining factor of $m|\nabla \phi|^{r/2-1}$.
On the other hand, if $r > 2$, then the first term on the r.h.s. is problematic: $|\nabla \phi|^{r-2}\nabla \phi \in L^{r'}([0,T]\times\T^d)$, but $r' < 2$, and thus $L^2$-regularity for $\nabla (m^{q/2})$ is not enough to ensure that the product $\nabla (m^{q/2}) \cdot |\nabla \phi|^{r-2}\nabla \phi$ is integrable.

\vspace{15pt}

{\sc Acknowledgements} 

\vspace{15pt}

We thank Filippo Santambrogio for the fruitful discussions during this project. In particular, some discussions with him and his paper \cite{San17} inspired us in writing this paper. We thank also Pierre Cardaliaguet for his remarks and valuable suggestions on the manuscript. 

The first author was supported by the National Science Foundation through Grant DMS-1612880.

The second author was partially supported by the Gaspar Monge Program for Optimization and Operation Research (PGMO) via the project {\it VarMFGPDE}. He would like to thank the hospitality of Tohoku University and Tohoku Forum for Creativity, Sendai, Japan in the framework of the Thematic Program on {\it Nonlinear Partial Differential Equations for Future Applications} in Summer 2017.

\bibliographystyle{alpha}
\bibliography{jameson_alpar}{}

\begin{thebibliography}{GPSM16}

\bibitem[AC08]{AmbCri}
L.~Ambrosio and G.~Crippa.
\newblock Existence, uniqueness, stability and differentiability properties of
  the flow associated to weakly differentiable vector fields.
\newblock In {\em Transport equations and multi-{D} hyperbolic conservation
  laws}, volume~5 of {\em Lect. Notes Unione Mat. Ital.}, pages 3--57.
  Springer, Berlin, (2008).

\bibitem[AF08]{AmbFig2}
L.~Ambrosio and A.~Figalli.
\newblock On the regularity of the pressure field of {B}renier's weak solutions
  to incompressible {E}uler equations.
\newblock {\em Calc. Var. Partial Differential Equations}, 31(4):497--509,
  (2008).

\bibitem[Bre99]{Bre99}
Y.~Brenier.
\newblock Minimal geodesics on groups of volume-preserving maps and generalized
  solutions of the {E}uler equations.
\newblock {\em Comm. Pure Appl. Math.}, 52(4):411--452, (1999).

\bibitem[Car15]{Car15}
P.~Cardaliaguet.
\newblock Weak solutions for first order mean field games with local coupling.
\newblock In {\em Analysis and geometry in control theory and its
  applications}, volume~11 of {\em Springer INdAM Ser.}, pages 111--158.
  Springer, Cham, (2015).

\bibitem[CG15]{CarGra}
P.~Cardaliaguet and P.~J. Graber.
\newblock Mean field games systems of first order.
\newblock {\em ESAIM Control Optim. Calc. Var.}, 21(3):690--722, (2015).

\bibitem[CGPT15]{CarGraPorTon}
P.~Cardaliaguet, P.~J. Graber, A.~Porretta, and D.~Tonon.
\newblock Second order mean field games with degenerate diffusion and local
  coupling.
\newblock {\em NoDEA Nonlinear Differential Equations Appl.}, 22(5):1287--1317,
  (2015).

\bibitem[CLLP12]{CarLasLioPor2}
P.~Cardaliaguet, J.-M. Lasry, P.-L. Lions, and A.~Porretta.
\newblock Long time average of mean field games.
\newblock {\em Netw. Heterog. Media}, 7(2):279--301, (2012).

\bibitem[CLLP13]{CarLasLioPor}
P.~Cardaliaguet, J.-M. Lasry, P.-L. Lions, and A.~Porretta.
\newblock Long time average of mean field games with a nonlocal coupling.
\newblock {\em SIAM J. Control Optim.}, 51(5):3558--3591, (2013).

\bibitem[CMS16]{CarMesSan}
P.~Cardaliaguet, A.R. M\'esz\'aros, and F.~Santambrogio.
\newblock First order {M}ean {F}ield {G}ames with density constraints: pressure
  equals price.
\newblock {\em SIAM J. Control Optim.}, 54(5): 2672--2709, (2016).

\bibitem[CPT15]{CarPorTon}
P.~Cardaliaguet, A.~Porretta, and D.~Tonon.
\newblock Sobolev regularity for the first order {H}amilton-{J}acobi equation.
\newblock {\em Calc. Var. Partial Differential Equations}, 54(3):3037--3065,
  (2015).

\bibitem[Eva10]{Eva}
L.~C. Evans.
\newblock Adjoint and compensated compactness methods for {H}amilton-{J}acobi
  {PDE}.
\newblock {\em Arch. Ration. Mech. Anal.}, 197(3):1053--1088, (2010).

\bibitem[GM]{GraMes2}
P.~J. Graber and A.~R. M\'esz\'aros.
\newblock On the planning problem in the theory of mean field
  games.
\newblock {\em in preparation.}

\bibitem[GPSM15]{GomPimSan15}
D.~A. Gomes, E.~A. Pimentel, and H.~S{\'a}nchez-Morgado.
\newblock Time-dependent mean-field games in the subquadratic case.
\newblock {\em Comm. Partial Differential Equations}, 40(1):40--76, (2015).

\bibitem[GPSM16]{GomPimSan16}
D.~A. Gomes, E.~Pimentel, and H.~S{\'a}nchez-Morgado.
\newblock Time-dependent mean-field games in the superquadratic case.
\newblock {\em ESAIM Control Optim. Calc. Var.}, 22(2):562--580, (2016).

\bibitem[GPV16]{GomPimVos}
D.~A. Gomes, E.~Pimentel, and V.~Voskanyan.
\newblock {\em Regularity theory for mean-field game systems}.
\newblock SpringerBriefs in Mathematics. Springer, [Cham], (2016).

\bibitem[HMC06]{HuaMalCai}
M.~Huang, R.~P. Malham{\'e}, and P.~E. Caines.
\newblock Large population stochastic dynamic games: closed-loop
  {M}c{K}ean-{V}lasov systems and the {N}ash certainty equivalence principle.
\newblock {\em Commun. Inf. Syst.}, 6(3):221--251, (2006).


\bibitem[LL06a]{LasLio06i}
J.-M. Lasry and P.-L. Lions.
\newblock Jeux \`a champ moyen {I}. {L}e cas stationnaire.
\newblock {\em C. R. Math. Acad. Sci. Paris}, 343:619--625, (2006).

\bibitem[LL06b]{LasLio06ii}
J.-M. Lasry and P.-L. Lions.
\newblock Jeux \`a champ moyen {II}. {H}orizon fini et contr\^ole optimal.
\newblock {\em C. R. Math. Acad. Sci. Paris}, 343:679--684, (2006).

\bibitem[LL07]{LasLio07}
J.-M. Lasry and P.-L. Lions.
\newblock Mean field games.
\newblock {\em Jpn. J. Math.}, 2:229--260, (2007).

\bibitem[LS17]{LavSan}
H.~Lavenant and F.~Santambrogio.
\newblock Optimal density evolution with congestion: ${L}^\infty$ bounds via
  flow interchange techniques and applications to variational mean field games.
\newblock {\em preprint}, (2017).

\bibitem[Lio11]{Lions-course}
P.-L. Lions.
\newblock \textit{Cours au {C}oll\`ege de {F}rance}.
\newblock {\em {\rm www.college-de-france.fr}}, (2007-2011).

\bibitem[Por15]{Por}
A.~Porretta.
\newblock Weak solutions to {F}okker-{P}lanck equations and mean field games.
\newblock {\em Arch. Ration. Mech. Anal.}, 216(1):1--62, (2015).

\bibitem[PS17]{ProSan}
A.~Prosinski and F.~Santambrogio.
\newblock Global-in-time regularity via duality for congestion-penalized mean
  field games.
\newblock {\em Stochastics}, to appear, (2017).

\bibitem[San18]{San17}
F.~Santambrogio.
\newblock Regularity via duality in calculus of variations and degenerate
  elliptic {PDE}s.
\newblock {\em J. Math. Anal. Appl.},  457(2):1649--1674, (2018).

\end{thebibliography}
\end{document}